\newcommand{\cache}[1]{}
\newcommand{\marge}[1]{\marginpar{#1}} 
\newcommand{\red}[1]{{\color{red}#1}}
\newcommand{\blue}[1]{{\color{blue}#1}}
\newcommand{\green}[1]{{\color{green}#1}}
\def\choixcompteur{subsection}
\newtheorem{theo}[\choixcompteur]{Theorem}
\newtheorem{prop}[\choixcompteur]{Proposition}
\newtheorem{lemm}[\choixcompteur]{Lemma}
\theoremstyle{definition}
\newtheorem{rema}[\choixcompteur]{Remark}
\newtheorem*{exem*}{Example}
\newtheorem*{exems*}{Examples}
\newtheorem*{exam*}{Example}
\newtheorem*{exams*}{Examples}
\newtheorem*{rema*}{Remark}
\newtheorem*{remas*}{Remarks}
\newtheorem*{NB}{N.B}
\theoremstyle{definition}
\newtheorem*{defi*}{Definition}
\newtheorem*{defiprop*}{Definition-Proposition}
\theoremstyle{plain}
\newtheorem*{prop*}{Proposition}
\newtheorem*{lemm*}{Lemma}
\newtheorem*{coro*}{Corollary}
\newtheorem*{theo*}{Theorem}
\newtheorem*{conj*}{Conjecture}
 \def\cdr@enoncedef{%
 \newenvironment{enonce*}[2][plain]%
 {\let\cdrenonce\relax \theoremstyle{##1}%
 \newtheorem*{cdrenonce}{##2}%
 \begin{cdrenonce}}%
 {\end{cdrenonce}}   }%
\def\cf{{\it cf.\/}\ }
\def\ie{{\it i.e.\/}\ }
\def\eg{{\it e.g.\/}\ }
\def\lc{{\it l.c.\/}\ }
\def\resp{{\it resp.,\/}\ }
\def\truc{\unskip\kern 3pt\penalty 500
\hbox{\vrule\vbox to 5pt{\hrule width 4pt\vfill\hrule}\vrule}\kern 3pt}
\def\vect{\overrightarrow}
\def\un{\underline}
\def\ov{\overline}
\def\parni{\par\noindent}
\def\eds{ editors}
\def\N{{\mathbb N}}    
\def\Z{{\mathbb Z}}
\def\R{{\mathbb R}}
\def\C{{\mathbb C}}
\def\A{{\mathbb A}}
\def\M{{\mathbb M}}
\newcommand{\g}[1]{\mathfrak{#1}} 
\def\qa{\alpha}     
\def\qb{\beta}
\def\qd{\delta}
\def\qe{\varepsilon}
\def\qf{\varphi}
 \def\qk{\kappa}
 \def\ql{\lambda}
\def\qm{\mu}
\def\qn{\nu}
\def\qp{\pi}
\def\qr{\rho}
\def\qs {\sigma}
\def\qx{\xi}
 \def\qz{\zeta}
\def\QD{\Delta}
\def\QF{\Phi}
\def\QL{\Lambda}
\def\sha{{\mathcal A}}   
\def\shc{{\mathcal C}}
\def\shk{{\mathcal K}}
\def\shm{{\mathcal M}}
\def\sho{{\mathcal O}}
\def\shq{{\mathcal Q}}
\def\sht{{\mathcal T}}
\def\SHC{{\mathscr C}}
\def\SHH{{\mathscr H}}
\def\SHI{{\mathscr I}}
\def\SHQ{{\mathscr Q}}
\def\SHS{{\mathscr S}}
\begin{document}

\title{On structure constants of Iwahori-Hecke algebras for Kac-Moody groups }
\author{Nicole Bardy-Panse and Guy Rousseau}

\date{September 16, 2019}

\maketitle

%





\begin{abstract} We consider the Iwahori-Hecke algebra $^I\!\SHH$ associated to an almost split Kac-Moody group $G$ (affine or not) over a nonarchimedean local field $\shk$.
It has a canonical double-coset basis $(T_{\mathbf w})_{\mathbf w\in W^+}$ indexed by a sub-semigroup $W^+$ of the affine Weyl group $W$.
The multiplication is given by structure constants $a^ {\mathbf u}_{\mathbf w,\mathbf v}\in\N=\Z_{\geq0}$ : $T_{\mathbf w}*T_{\mathbf v}=\sum_{\mathbf u\in P_{\mathbf w,\mathbf v}} a^ {\mathbf u}_{\mathbf w,\mathbf v} T_{\mathbf u}$.
A conjecture, by Bravermann, Kazhdan, Patnaik, Gaussent and the authors, tells that $a^ {\mathbf u}_{\mathbf w,\mathbf v}$ is a polynomial, with coefficients in $\N$, in the parameters $q_{i}-1,q'_{i}-1$ of $G$ over $\shk$.       
We prove this conjecture when $\mathbf w$ and $\mathbf v$ are spherical 
or, more generally, when they are said generic: this includes all cases of $\mathbf w,\mathbf v\in W^+$ if $G$ is of affine or strictly hyperbolic type.
In the split affine case (where $q_{i}=q'_{i}=q$, $\forall i$) we get a universal Iwahori-Hecke algebra with the same basis $(T_{\mathbf w})_{\mathbf w\in W^+}$ over a polynomial ring $\Z[Q]$; it specializes to $^I\!\SHH$ when one sets $Q=q$.
\end{abstract}

\setcounter{tocdepth}{1}    

\section*{Introduction}
\label{seIntro}  Let $G$ be a split, semi-simple, simply connected algebraic group over a non archimedean local field $\shk$.
 So $\shk$ is complete for a discrete, non trivial valuation with a finite residue field $\qk$.
 We write $\sho\subset\shk$ the ring of integers and $q$ the cardinality of $\qk$.
 Then $G$ is locally compact. In this situation,  Nagayoshi Iwahori and Hideya Matsumoto in \cite{IM65}, introduced an open compact subgroup $K_{I}$ of $G$, now known as an Iwahori subgroup.
 If $N$ is the normalizer of a suitable split maximal torus $T\simeq (\shk^*)^n$, then $(K_{I},N)$ is a BN pair.
 The Iwahori-Hecke algebra of $G$ is the algebra ${^I\!}\SHH_{R}={^I\!}\SHH_{R}(G,K_{I})$ of locally constant, compactly supported functions on $G$, with values in a ring $R$, that are bi-invariant by the left and right actions of $K_{I}$.
 The multiplication is given by the convolution product.
 
 \par If $H\simeq(\sho^*)^n$ is the maximal compact subgroup of $T$, then $H\subset K_{I}$ and $W=N/H$ is the affine Weyl group.
 One has the Bruhat decomposition $G=K_{I}.W.K_{I}=\sqcup_{\mathbf w\in W} K_{I}.\mathbf w.K_{I}$.
 If one considers the characteristic function $T_{\mathbf w}$ of $K_{I}.\mathbf w.K_{I}$, we get a basis of ${^I\!}\SHH_{R}$: ${^I\!}\SHH_{R}=\oplus_{\mathbf w\in W} R.T_{\mathbf w}$.
 The convolution product is given by $T_{\mathbf w}*T_{\mathbf v}=\sum_{\mathbf u\in P_{\mathbf w,\mathbf v}} a^ {\mathbf u}_{\mathbf w,\mathbf v} T_{\mathbf u}$, with $P_{\mathbf w,\mathbf v}$ a finite subset  of $W$.
 The numbers $a^ {\mathbf u}_{\mathbf w,\mathbf v}\in R$ are the structure constants of ${^I\!}\SHH_{R}$. The unit is $1=T_{e}$.

\par Iwahori and Matsumoto gave a precise (and now classical) definition of $\SHH_{R}$ by generators and relations.
The group $W$ is an infinite Coxeter group generated by $\{r_{0},\ldots,r_{n}\}$.
Then $\SHH_{R}$ is generated by $\{T_{r_{0}},\ldots,T_{r_{n}}\}$ with relations $T_{r_{i}}^2=q.1+(q-1).T_{r_{i}}$ and $T_{r_{i}}*T_{r_{j}}*T_{r_{i}}*\cdots=T_{r_{j}}*T_{r_{i}}*T_{r_{j}}*\cdots$ (with $m_{i,j}$ factors on each side) for $i\neq j$, if $m_{i,j}$ is the finite order of $r_{i}r_{j}$.
For $\mathbf w=r_{i_{1}}.\ldots.r_{i_{s}}$ a reduced expression in $W$, one has $T_{\mathbf w}=T_{r_{i_{1}}}*\cdots*T_{r_{i_{s}}}$.
In a Coxeter group one knows the rules to get (using the Coxeter relations between the $r_{i}$) a reduced expression from a non reduced expression (\eg the product of two reduced expressions $\mathbf w=r_{i_{1}}.\ldots.r_{i_{s}}$ and $\mathbf v=r_{j_{1}}.\ldots.r_{j_{t}}$).
So one deduces easily (using the above relations between the $T_{r_{i}}$) that each structure constant $a^ {\mathbf u}_{\mathbf w,\mathbf v}$ (for $\mathbf u, \mathbf v,\mathbf w \in W$) is in $\Z[q]$.
More precisely it is a polynomial in $q-1$ with coefficients in $\N=\Z_{\geq0}$.
This polynomial depends only on $\mathbf u, \mathbf v,\mathbf w$ and $W$.

\par So one has a universal description of ${^I\!}\SHH_{\Z}$ as a $\Z[q]-$algebra, depending only on $W$.

\par There are various generalizations of the above situation.
First one may replace $G$ by a general reductive group over $\shk$, isotropic but eventually non split.
Then one has to consider the relative affine Weyl group $W$, which is a Coxeter group.
One may still define a compact, open Iwahori subgroup  $K_{I}$ and there is a Bruhat decomposition $G=K_{I}.W.K_{I}$.
Now the description of ${^I\!}\SHH_{R}$ involves parameters $q_{i}$ (satisfying $T_{r_{i}}^2=q_{i}.1+(q_{i}-1).T_{r_{i}}$) which are eventually different from $q$.
This gives the Iwahori-Hecke algebra with unequal parameters.
There is a pleasant description of ${^I\!}\SHH_{R}$ using the Bruhat-Tits building associated to the BN pair $(K_{I},N)$, see \eg \cite{P06}.

For now more than twenty years, there is an increasing interest in the study of Kac-Moody groups over local fields, see essentially the works of Bravermann, Garland, Kapranov, Kazhdan, Patnaik, Gaussent and the authors: \eg \cite{Ga95}, \cite{GaG95}, \cite{Kap01}, \cite{BrK11}, \cite{BrK14}, \cite{BrGKP14}, \cite{BrKP14}, \cite{GR13}, \cite{BPGR16}, \cite{BPGR17}.
 It has been possible to define and study for Kac-Moody groups (supposed at first affine) the spherical Hecke algebra, the Iwahori-Hecke algebra, the Satake isomorphism, \ldots.
This is also closely related to more abstract works on Hecke algebras by Cherednik and Macdonald, \eg  \cite{Che92}, \cite{Che95}, \cite{Ma03}.

\par We are mainly interested in Iwahori-Hecke algebras for Kac-Moody groups over local fields.
They were introduced and described by Bravermann, Kazhdan and Patnaik in the affine case \cite{BrKP14} and then in general by Gaussent and the authors \cite{BPGR16}.
So let us consider a Kac-Moody group $G$ (affine or not) over the local field $\shk$.
We suppose it split (as defined by Tits \cite{T87}) or more generally almost split \cite{Re02}. Let us choose also a maximal split subtorus.
To this situation is associated an affine (relative) Weyl group $W$ and an Iwahori subgroup $K_{I}$ (defined up to conjugacy by $W$), see \ref{1.3} (5) and (7) below.
This group $W$ is not a Coxeter group but may be described as a semi-direct product $W=W^v\ltimes Y$, where $W^v$ is a Coxeter group, the relative Weyl group, and $Y$ is (essentially) the cocharacter group of the torus.

\par Unfortunately the Bruhat decomposition ``$G=K_{I}.W.K_{I}$'' fails to be true (even in the untwisted affine case, \ie for loop groups).
One has to consider the sub-semigroup $W^+=W^v\ltimes Y^+$ (\resp $W^{+g}=W^v\ltimes Y^{+g}$) of $W$, where $Y^+$ (\resp $Y^{+g}$) is the intersection of $Y$ with the Tits cone $\sht$ (\resp  with a cone $\sht^\circ\cup V_{0}\subset\sht$, where $\sht^\circ$ is the open Tits cone) in $V=Y\otimes_{\Z}\R$ (see \ref{1.2}, \ref{1.4}, and \ref{1.11} below).
Then $G^+=K_{I}.W^+.K_{I}$ (\resp $G^{+g}=K_{I}.W^{+g}.K_{I}\subset G^+$) is a sub-semigroup of $G$: the Kac-Moody-Tits semigroup (\resp  the generic Kac-Moody-Tits semigroup).
We may consider the characteristic functions $T_{\mathbf w}$ of the double cosets $K_{I}.\mathbf w.K_{I}$ and one proves in \cite{BPGR16} that:

\medskip
\par The space ${^I\!}\SHH_{R}$ (\resp ${^I\!}\SHH^g_{R}$) of $R-$valued functions with finite support on $K_{I}\backslash G^ {+}/K_{I}$ (\resp $K_{I}\backslash G^ {+g}/K_{I}$) is naturally endowed with a structure of algebra (see \ref{s2}).
We get thus the Iwahori-Hecke algebra ${^I\!}\SHH_{R}=\oplus_{\mathbf w\in W^+} R.T_{\mathbf w}$ (\resp the generic Iwahori-Hecke algebra ${^I\!}\SHH^g_{R}=\oplus_{\mathbf w\in W^{+g}} R.T_{\mathbf w}$). 
 The product is given by structure constants $a^ {\mathbf u}_{\mathbf w,\mathbf v}\in\N=\Z_{\geq0}$: $T_{\mathbf w}*T_{\mathbf v}=\sum_{\mathbf u\in P_{\mathbf w,\mathbf v}} a^ {\mathbf u}_{\mathbf w,\mathbf v} T_{\mathbf u}$.

\begin{enonce*}[plain]{Conjecture 1} \cite[2.5]{BPGR16} Each $a^ {\mathbf u}_{\mathbf w,\mathbf v}$ is a polynomial, with coefficients in $\N=\Z_{\geq0}$, in the parameters $q_{i}-1,q'_{i}-1$ of the situation, see \ref{1.3}.6 below.
This polynomial depends only on the affine Weyl group $W$ acting on the apartment $\A$ and on $\mathbf w,\mathbf v,\mathbf u\in W^+$.
\end{enonce*}

\par One may consider that this is a translation of the following question of Braverman, Kazhdan and Patnaik :

\begin{enonce*}[plain]{Question} \cite[end of 1.2.4]{BrKP14} Has the algebra ${^I\!}\SHH_{\C}$ a purely algebraic or combinatorial description with respect to the coset basis  $(T_{\mathbf w})_{\mathbf w\in W^+}$ ?
\end{enonce*}

\par But a more precise formulation of this question is as follows :

\begin{enonce*}[plain]{Conjecture 2} The algebra ${^I\!}\SHH_{\Z}$ (or ${^I\!}\SHH^g_{\Z}$) is the specialization of an algebra ${^I\!}\SHH_{\Z[\SHQ]}$ (or ${^I\!}\SHH^g_{\Z[\SHQ]}$) with the same basis $(T_{\mathbf w})_{\mathbf w\in W^+}$ (or $(T_{\mathbf w})_{\mathbf w\in W^{+g}}$) over $\Z[\SHQ]$.
Here $\SHQ$ is a set of indeterminates $Q_{i},Q'_{i}$ (with some equalities between them, see \ref{1.3}.6 below) and the specialization is given by $Q_{i}\mapsto q_{i},Q'_{i}\mapsto q'_{i}, \forall i\in I$.
The algebra  ${^I\!}\SHH_{\Z[\SHQ]}$ (or ${^I\!}\SHH^g_{\Z[\SHQ]}$)  depends only on the affine Weyl group $W$ acting on the apartment $\A$. 
\end{enonce*}

\par Let us consider the split case: $G$ is a split Kac-Moody group, all parameters $q_{i},q'_{i}$ are equal to $q=\vert\qk\vert$ and all indeterminates $Q_{i},Q'_{i}$ are equal to a single indeterminate $Q$. 
Then the conjecture 1 has already been proved by Gaussent and the authors \cite[6.7]{BPGR16} and independently by Muthiah \cite{Mu15} if, moreover, $G$ is untwisted affine.
Actually the same proof gives also conjecture 2, see \ref{1.3}.7 below.

 In the general (non split) case weakened versions were obtained in \cite{BPGR16}: the $a^ {\mathbf u}_{\mathbf w,\mathbf v}$ are Laurent polynomials in the $q_{i},q'_{i}$ [\lc 6.7]; they are true polynomials if $\mathbf w,\mathbf v \in W^v\ltimes(Y\cap\sht^\circ)$ and $\mathbf v$ is ``regular'' [\lc 3.8].

\par In this article, we prove the conjecture 1 when $\mathbf w$ and $\mathbf v$ are in $W^ {+g}$ (see \ref{sc8}).
We remark also that $W^+=W^ {+g}$ in the affine case (twisted or not) or the strictly hyperbolic case, even if $G$ is not split. 
This is a first step towards the description of an abstract algebra ${^I\!}\SHH_{\Z[\SHQ]}$ (\resp ${^I\!}\SHH^g_{\Z[\SHQ]}$) over $\Z[\SHQ]$ in the affine  (or strictly hyperbolic) case (\resp in the general case).

\par One should mention here that one may give a more precise description of the Iwahori-Hecke algebra using a Bernstein-Lusztig presentation (see \cite{GaG95}, \cite{BrKP14} and \cite{BPGR16}).
But this description is given in a new basis and the coefficients of the change of basis matrix are Laurent polynomials in the parameters $q_{i},q'_{i}$.
So this description is not sufficient to prove the conjecture.

\par Actually this article is written in a more general framework explained in Section \ref{s1}: as in \cite{BPGR16}, we work with an abstract masure $\SHI$ and we take $G$ to be a strongly transitive group of vectorially-Weyl automorphisms of $\SHI$.
In Section \ref{pr} we gather the additional technical tools (\eg decorated Hecke paths) needed to improve the results of \cite[Section 3]{BPGR16}. 
We get our main results about $a^ {\mathbf u}_{\mathbf w,\mathbf v}$ in Section \ref{sc}: we deal with the cases $\mathbf w,\mathbf v$ spherical.
In Section \ref{s4} we deal with the remaining cases where $\mathbf w,\mathbf v$  are in $W^ {+g}$, \ie when $\mathbf w,\mathbf v$ are said generic.

\section{General framework}\label{s1}

\subsection{Vectorial data}\label{1.1}  We consider a quadruple $(V,W^v,(\qa_i)_{i\in I}, (\qa^\vee_i)_{i\in I})$ where $V$ is a finite dimensional real vector space, $W^v$ a subgroup of $GL(V)$ (the vectorial Weyl group), $I$ a finite set, $(\qa^\vee_i)_{i\in I}$ a {free} family in $V$ and $(\qa_i)_{i\in I}$ a free family in the dual $V^*$.
 We ask these data to satisfy the conditions of \cite[1.1]{R11}.
  In particular, the formula $r_i(v)=v-\qa_i(v)\qa_i^\vee$ defines a linear involution in $V$ which is an element in $W^v$ and $(W^v,\{r_i\mid i\in I\})$ is a Coxeter system.

  \par To be more concrete, we consider the Kac-Moody case of [\lc; 1.2]: the matrix $\M=(\qa_j(\qa_i^\vee))_{i,j\in I}$ is a generalized Cartan matrix.
  Then $W^v$ is the Weyl group of the corresponding Kac-Moody Lie algebra $\g g_\M$ and the associated real root system is
$$
\QF=\{w(\qa_i)\mid w\in W^v,i\in I\}\subset Q=\bigoplus_{i\in I}\,\Z.\qa_i.
$$ We set $\QF^\pm{}=\QF\cap Q^\pm{}$ where $Q^\pm{}=\pm{}(\bigoplus_{i\in I}\,(\Z_{\geq 0}).\qa_i)$ and $Q^\vee=(\bigoplus_{i\in I}\,\Z.\qa_i^\vee)$, $Q^\vee_\pm{}=\pm{}(\bigoplus_{i\in I}\,(\Z_{\geq 0}).\qa_i^\vee)$.
   We have  $\QF=\QF^+\cup\QF^-$ and, for $\qa=w(\qa_i)\in\QF$, $r_\qa=w.r_i.w^{-1}$ and $r_\qa(v)=v-\qa(v)\qa^\vee$, where the coroot $\qa^\vee=w(\qa_i^\vee)$ depends only on $\qa$.

\par The set $\QF$ is an (abstract, reduced) real root system in the sense of \cite{MP89}, \cite{MP95} or \cite{Ba96}.
We shall sometimes also use the set $\QD=\QF\cup\QD^+_{im}\cup\QD^-_{im}$ of all roots (with $-\QD^-_{im}=\QD^+_{im}\subset Q^+$,  $W^v-$stable) defined in \cite{K90}.
 It is an (abstract, reduced) root system in the sense of \cite{Ba96}.

  \par The {\it fundamental positive chamber} is $C^v_f=\{v\in V\mid\qa_i(v)>0,\forall i\in I\}$.
   Its closure $\overline{C^v_f}$ is the disjoint union of the vectorial faces $F^v(J)=\{v\in V\mid\qa_i(v)=0,\forall i\in J,\qa_i(v)>0,\forall i\in I\setminus J\}$ for $J\subset I$. 
   We set $V_0 = F^v(I)$.
    The positive (resp. negative) vectorial faces are the sets $w.F^v(J)$ (resp. $-w.F^v(J)$) for $w\in W^v$ and $J\subset I$.
    The support of such a face is the vector space it generates.
    The set $J$ or the face $w.F^v(J)$ or an element of this face is called {\it spherical} if the group $W^v(J)$ generated by $\{r_i\mid i\in J\}$ is finite.
    An element of a vectorial chamber $\pm w.C^v_f$ is called {\it regular}.

    \par The {\it Tits cone}  $\sht$ (\resp its interior $\sht^\circ$) is the (disjoint) union of the positive (\resp and spherical) vectorial faces. It is a $W^v-$stable convex cone in $V$.
One has $\sht=\sht^\circ=V$ (\resp $V_{0} \subset \sht \setminus \sht^\circ$) in the classical (\resp non classical\blue{)} case, \ie when $W^v$ is finite (\resp infinite).

  \par 
  We say that $\A^v=(V,W^v)$ is a {\it vectorial apartment}.

\subsection{The model apartment}\label{1.2} As in \cite[1.4]{R11} the model apartment $\A$ is $V$ considered as an affine space and endowed with a family $\shm$ of walls. 
 These walls  are affine hyperplanes directed by $\ker(\qa)$ for $\qa\in\QF$.
 More precisely, they may be written $M(\qa,k)=\{v\in V\mid\qa(v)+k=0\}$, for $\qa\in\QF$ and $k\in\R$.

 \par We ask this apartment to be {\bf semi-discrete} and the origin $0$ to be {\bf special}.
  This means that these walls are the hyperplanes $M(\qa,k)=\{v\in V\mid\qa(v)+k=0\}$ for $\qa\in\QF$ and $k\in\QL_\qa,$ with $\QL_\qa=k_\qa.\Z$ a non trivial discrete subgroup of $\R$. 
Using   \cite[Lemma 1.3]{GR13} (\ie replacing $\QF$ by another system $\QF_1$) we may (and shall) assume that $\QL_\qa=\Z, \forall\qa\in\QF$.

  \par For $\qa=w(\qa_i)\in\QF$, $k\in\Z$ and $M=M(\qa,k)$, the reflection $r_{\qa,k}=r_M$ with respect to $M$ is the affine involution of $\A$ with fixed points the wall $M$ and associated linear involution $r_\qa$.
   The affine Weyl group $W^a$ is the group generated by the reflections $r_M$ for $M\in \shm$; we assume that $W^a$ stabilizes $\shm$.
We know that $W^a=W^v\ltimes Q^\vee$ and we write $W^a_\R=W^v\ltimes V$; here $Q^\vee$ and $V$ have to be understood as groups of translations.

   \par An automorphism of $\A$ is an affine bijection $\qf:\A\to\A$ stabilizing the set of pairs $(M,\qa^\vee)$ of a wall $M$ and the coroot associated with $\qa\in\QF$ such that $M=M(\qa,k)$, $k\in\Z$. The group $Aut(\A)$ of these automorphisms contains $W^a$ and normalizes it.
We consider also the group $Aut^W_\R(\A)=\{\qf\in Aut(\A)\mid\vect{\qf}\in W^v\}=Aut(\A)\cap W^a_\R$.

   \par For $\qa\in\QF$ and $k\in\R$, $D(\qa,k)=\{v\in V\mid\qa(v)+k\geq 0\}$ is an half-space, it is called an {\it half-apartment} if $k\in\Z$. We write  $D(\alpha,\infty) = \mathbb A$.

The Tits cone $\mathcal T$ 
and its interior $\mathcal T^o$ are convex and $W^v-$stable cones, therefore, we can define three $W^v-$invariant preorder relations  on $\mathbb A$: 
$$
x\leq y\;\Leftrightarrow\; y-x\in\mathcal T
; \quad x\stackrel{o}{<} y\;\Leftrightarrow\; y-x\in\mathcal T^o
; \quad x\stackrel{o}{\leq} y\;\Leftrightarrow\; y-x\in\mathcal T^o\cup V_{0}.
$$
 If $W^v$ has no  fixed point in $V\setminus\{0\}$ (\ie $V_{0}=\{0\}$) and no finite factor, then they are orders; but, in general, they are not.

\subsection{Faces, sectors}
\label{suse:Faces}

 The faces in $\mathbb A$ are associated to the above systems of walls
and half-apartments. As in \cite{BrT72}, they
are no longer subsets of $\mathbb A$, but filters of subsets of $\mathbb A$. For the definition of that notion and its properties, we refer to \cite{BrT72} or \cite{GR08}.

If $F$ is a subset of $\mathbb A$ containing an element $x$ in its closure,
the germ of $F$ in $x$ is the filter $\mathrm{germ}_x(F)$ consisting of all subsets of $\mathbb A$ which contain intersections of $F$ and neighbourhoods of $x$. In particular, if $x\neq y\in \mathbb A$, we denote the germ in $x$ of the segment $[x,y]$ (resp. of the interval $]x,y]$) by $[x,y)$ (resp. $]x,y)$).

\par For $y≠x$, the segment germ $[x,y)$ is called of sign $\pm$ if $y-x\in\pm\sht$.
The segment $[x,y]$ (or the segment germ $[x,y)$ or the ray $[x,y($ with origin $x$ containing $y$) is called {\it preordered}  if $x\leq y$ or $y\leq x$ and {\it generic}  if $x\stackrel{o}{<} y$ or $y\stackrel{o}{<} x$.

Given $F$ a filter of subsets of $\mathbb A$, its {\it strict enclosure} $cl_{\mathbb A}(F)$ (resp. {\it closure} $\overline F$) is the filter made of  the subsets of $\mathbb A$ containing an element of $F$ of the shape $\cap_{\alpha\in\Delta}D(\alpha,k_\alpha)$, where $k_\alpha\in {\Z}\cup\{\infty\}$ (resp. containing the closure $\overline S$ of some $S\in F$). 
One considers also the (larger) {\it enclosure} $cl_\A^\#(F)$ of \cite[3.6.1]{R13} (introduced in \cite{Ch10}, \cite {Ch11} and well studied in \cite{He17a},  see also \cite{Heb18}).
It is  the filter made of  the subsets of $\mathbb A$ containing an element of $F$ of the shape $\cap_{\alpha\in\Psi}D(\alpha,k_\alpha)$, with $\Psi\subset\QF$ finite and  $k_\alpha\in {\Z}$  (\ie a finite intersection of half apartments).

\medskip

A {\it local face} $F$ in the apartment $\mathbb A$ is associated
 to a point $x\in \mathbb A$, its vertex, and a  vectorial face $F^v$ in $V$, its direction. It is defined as $F=germ_x(x+F^v)$ and we denote it by $F=F^\ell(x,F^v)$.
 Its closure is $\overline{F^\ell}(x,F^v)=germ_x(x+\overline{F^v})$
There is an order on the local faces: the assertions ``$F$ is a face of $F'$ '',
``$F'$ covers $F$ '' and ``$F\leq F'$ '' are by definition equivalent to
$F\subset\overline{F'}$.
 The dimension of a local face $F$ is the smallest dimension of an affine space generated by some $S\in F$.
  The (unique) such affine space $E$ of minimal dimension is the support of $F$; if $F=F^\ell(x,F^v)$, $supp(F)=x+supp(F^v)$.
 A local face $F=F^\ell(x,F^v)$ is spherical if the direction of its support meets the open Tits cone (\ie when $F^v$ is spherical), then its  pointwise stabilizer $W_F$ in $W^a$ or $W^a_{\R}$ is finite and fixes $x$.
 
 \par We shall actually here speak only of local faces, and sometimes forget the word local or write $F=F(x,F^v)$.

\medskip

 A {\it local chamber}  is a maximal local face, \ie a local face $F^\ell(x,\pm w.C^v_f)$ for $x\in\A$ and $w\in W^v$.
 The {\it fundamental local positive (\resp negative)} chamber is $C_0^+=germ_0(C^v_f)$ (\resp $C_0^-=germ_0(-C^v_f)$).

A {\it (local) panel} is a spherical local face maximal among local faces which are not chambers, or, equivalently, a spherical face of dimension $n-1$. Its support is a wall.

\medskip
 A {\it sector} in $\mathbb A$ is a $V-$translate $\mathfrak s=x+C^v$ of a vectorial chamber
$C^v=\pm w.C^v_f$, $w \in W^v$. The point $x$ is its {\it base point} and $C^v$ its  {\it direction}.  Two sectors have the same direction if, and only if, they are conjugate
by $V-$translation,
 and if, and only if, their intersection contains another sector.

 The {\it sector-germ} of a sector $\mathfrak s=x+C^v$ in $\mathbb A$ is the filter $\mathfrak S$ of
subsets of~$\mathbb A$ consisting of the sets containing a $V-$translate of $\mathfrak s$, it is well
determined by the direction $C^v$. So, the set of
translation classes of sectors in $\mathbb A$, the set of vectorial chambers in $V$ and
 the set of sector-germs in $\mathbb A$ are in canonical bijection.

 A {\it sector-face} in $\mathbb A$ is a $V-$translate $\mathfrak f=x+F^v$ of a vectorial face
$F^v=\pm w.F^v(J)$. The sector-face-germ of $\mathfrak f$ is the filter $\mathfrak F$ of
subsets containing a translate $\mathfrak f'$ of $\mathfrak f$ by an element of $F^v$ ({\it i.e.} $\mathfrak
f'\subset \mathfrak f$). If $F^v$ is spherical, then $\mathfrak f$ and $\mathfrak F$ are also called
spherical. The sign of $\mathfrak f$ and $\mathfrak F$ is the sign of $F^v$.

 \subsection{The Masure}\label{1.3}

 In this section, we recall the definition and some properties of a masure given by Guy Rousseau in \cite{R11} and simplified by Auguste H\'ebert \cite{He17a}.

\medskip
\parni{\bf 1)} An apartment of type $\mathbb A$ is a set $A$ endowed with a set $Isom^W\!(\mathbb A,A)$ of bijections (called Weyl-isomorphisms) such that, if $f_0\in Isom^W\!(\mathbb A,A)$, then $f\in Isom^W\!(\mathbb A,A)$ if, and only if, there exists $w\in W^a$ satisfying $f = f_0\circ w$.
An isomorphism (resp. a Weyl-isomorphism, a vectorially-Weyl isomorphism) between two apartments $\varphi :A\to A'$ is a bijection such that, for any $f\in Isom^W\!(\mathbb A,A)$, $f'\in Isom^W\!(\mathbb A,A')$, $f'^{-1}\circ\qf\circ f\in Aut(\A)$ (resp. $\in W^a$, $\in Aut^W_\R(\A)$); the group of these isomorphisms is written $Isom(A,A')$ (resp. $Isom^W(A,A')$, $Isom^W_\R(A,A')$).
 As the filters in $\A$ defined in \ref{suse:Faces} above (\eg local faces, sectors, walls,..) are permuted by $Aut(\A)$, they are well defined in any apartment of type $\A$ and exchanged by any isomorphism.
\medskip
\par  A {\it masure} (formerly called an {\it ordered affine hovel}) of type $\mathbb A$ is a set $\SHI$ endowed with a covering $\mathcal A$ of subsets  called apartments, each endowed with some structure of an apartment of type $\A$.
 We  recall here the simplification and improvement of the original definition given by Auguste Hébert in \cite{He17a}: these data have to satisfy the following two axioms :

\medskip
\par (MA ii) If two apartments $A,A'$ are such that $A\cap A'$ contains a generic ray, then $A\cap A'$ is a finite intersection of half-apartments (\ie $A\cap A'=cl_A^\#(A\cap A')$) and there exists a Weyl isomorphism $\qf:A\to A'$ fixing $A\cap A'$.

\par (MA iii) If $\g R$ is the germ of a splayed chimney and if $F$ is a local face or a germ of a chimney, then there exists an apartment containing $\g R$ and $F$.

\medskip
\par Actually a filter or subset in $\SHI$ is called a preordered (or generic) segment (or  segment germ), a local face, a spherical sector face or a spherical sector face germ if it is included in some apartment $A$ and is called like that in $A$.
We do not recall here what is (a germ of) a (splayed) chimney; it contains (the germ of) a (spherical) sector face.
We shall actually use (MA iii) uniquely through its consequence b) below.

\par In the affine case the hypothesis ``$A\cap A'$ contains a generic ray'' is useless in (MA ii).

\medskip
We list now some of the properties of masures we shall use.

\par {\bf a)} If $F$ is a point, a preordered segment,  a local face or a spherical sector face in an apartment $A$ and if $A'$ is another apartment containing $F$, then $A\cap A'$ contains the
enclosure {$cl_A^\#(F)$} of $F$ and there exists a Weyl-isomorphism from $A$ onto $A'$ fixing $cl_A^\#(F)$, see \cite[5.11]{He17a} or \cite[4.4.10]{Heb18}.
Hence any isomorphism from $A$ onto $A'$ fixing $F$ fixes $\overline F$ (and even $cl_A^\#(F)\cap supp(F)$).

\par More generally the intersection of two apartments $A,A'$ is always closed (in $A$ and $A'$), see \cite[3.9]{He17a} or \cite[4.2.17]{Heb18}.

\par {\bf b)} If $\mathfrak F$ is  the germ of a spherical sector face and if $F$ is a {local} face or a germ of a  sector face, then there exists an apartment that contains $\mathfrak F$ and $F$.

\par {\bf c)}  If two apartments $A,A'$ contain $\mathfrak F$ and $F$ as in {\bf b)}, then their intersection contains  {$cl_A^\#(\mathfrak F\cup F)$} and there exists a Weyl-isomorphism from $A$ onto $A'$ fixing {$cl_A^\#(\mathfrak F\cup F)$}.

\par {\bf d)} We consider the relations $≤$, $\stackrel{o}{<}$ and $\stackrel{o}{\leq}$ on $\SHI$ defined as follows:
$$x≤y \text{ (\resp } x\stackrel{o}{<}y,x\stackrel{o}{\leq}y) \iff \exists A\in\sha \text{ such that }x,y\in A\text{ and } x≤_A y  \text{ (resp. } x\stackrel{o}{<}_Ay,x\stackrel{o}{\leq}_Ay) $$

\par Then $\leq$ (\resp $\stackrel{o}{<}$, $\stackrel{o}{\leq}$) is a well defined preorder relation, in particular transitive; it is called the {\it Tits preorder} (\resp {\it Tits open preorder, large Tits  open preorder}), see \cite{He17a}.

\par {\bf e)}  We ask here $\SHI$ to be thick of {\bf finite thickness}: the number of local chambers  covering a given (local) panel in a wall has to be finite $\geq 3$.
     This number is the same for any panel $F$ in a given wall $M$ \cite[2.9]{R11}; we denote it by $1+q_M=1+q_{F}$.

\par {\bf f)}  An automorphism (resp. a Weyl-automorphism, a vectorially-Weyl automorphism) of $\SHI$ is a bijection $\qf:\SHI\to\SHI$ such that $A\in\sha\iff \qf(A)\in\sha$ and then $\qf\vert_A:A\to\qf(A)$ is an isomorphism (resp. a Weyl-isomorphism, a vectorially-Weyl isomorphism).
{We write $Aut(\SHI)$ (resp. $Aut^W(\SHI)$, $Aut^W_\R(\SHI)$) the group of these automorphisms.}
\medskip
\parni{\bf 2)} For $x\in\SHI$, the set $\sht^+_x\SHI$ (resp. $\sht^-_x\SHI$) of segment germs $[x,y)$ for $y>x$ (resp. $y<x$) may be considered as a building, the {\it positive} (resp. {\it negative}) {\it tangent building}. The corresponding faces are the local faces of positive (resp. negative) direction and vertex $x$. 
For such a local face $F$, we write sometimes $[x,y)\in F$ if $]x,y)\subset F$.
The associated Weyl group is $W^v$.
 If the $W-$distance (calculated in $\sht^\pm_x\SHI$) of two local chambers is $d^W(C_x,C'_x)=w\in W^v$, to any reduced decomposition $w=r_{i_1}\cdots r_{i_n}$ corresponds a unique minimal gallery from $C_x$ to $C'_x$ of type $(i_1,\cdots,i_n)$.
 
 \par The buildings $\sht^+_x\SHI$ and $\sht^-_x\SHI$ are actually twinned. The codistance $d^{*W}(C_x,C'_x)$ of two opposite sign chambers $C_x$ and $C'_x$ is the $W-$distance $d^W(C_x, op C'_x)$, where $op C'_x$ denotes the opposite chamber to $C'_x$ in an apartment containing $C_x$ and $C'_x$.
 Similarly two segment germs $\eta\in \sht^+_x\SHI$ and $\zeta\in \sht^-_x\SHI$ are said opposite  if they are in a same apartment $A$ and opposite in this apartment (\ie in the same line, with opposite directions).

\begin{enonce*}[plain]{3) Lemma} \cite[2.9]{R11} Let $D$ be an half-apartment in $\SHI$ and $M=\partial D$ its wall (\ie its boundary).
 One considers a panel $F$ in $M$ and a local chamber $C$ in $\SHI$ covering $F$.
 Then there is an apartment containing $D$ and $C$.
 \end{enonce*}

\parni{\bf 4)}  We assume that $\SHI$ has a strongly transitive group of automorphisms $G$,  \ie {1.a and 1.c above (after replacing $cl_A^\#$ by $cl_A$) are satisfied by isomorphisms induced  by elements of $G$, \cf \cite[4.10]{R13} and \cite[4.7]{CiMR17}.}
 
  We choose in $\SHI$ a fundamental apartment which we identify with $\A$.
   As $G$ is strongly transitive, the apartments of $\SHI$ are the sets $g.\A$ for $g\in G$. The stabilizer $N$ of $\A$ in $G$ induces a group $W=\qn(N)\subset Aut(\A)$ of affine automorphisms of $\A$ which permutes the walls, local faces, sectors, sector-faces... and contains the affine Weyl group $W^a=W^v\ltimes Q^\vee$ \cite[4.13.1]{R13}.

\par   We denote the  stabilizer of $0\in\A$ in $G$ by $K$ and the pointwise stabilizer (or fixer) of $C_0^+$ (\resp $C_{0}^-$) by $K_I=K_{I}^+$ (\resp $K_{I}^-$). This group $K_I$ is called the {\it Iwahori subgroup}.

\medskip
\parni{\bf 5)}  We ask  $W=\qn(N)$ to be  {\bf vectorially-Weyl} for its action on the vectorial faces.
      This means that the associated linear map $\vect w$ of any $w\in\qn(N)$ is in $W^v$.
      As $\qn(N)$ contains $W^a$ and stabilizes $\shm$, we have $W=\qn(N)=W^v\ltimes Y$, where $W^v$ fixes the origin $0$ of $\A$ and $Y$ is a group of translations such that:
  \quad    $Q^\vee\subset Y\subset P^\vee=\{v\in V\mid\qa(v)\in\Z,\forall\qa\in\QF\}$.
 An element $\mathbf{w}\in W$ will often be written $\mathbf{w}=\ql.w$, with $\ql\in Y$ and $w\in W^v$.

  \par We ask $Y$ to be {\bf discrete} in $V$. This is clearly satisfied if $\QF$ generates $V^*$ \ie $(\qa_i)_{i\in I}$ is a basis of $V^*$.
  
  \cache{\par We write $Z=\qn^ {-1}(Y)$ and $Z_{0}=\ker(\qn)$.\marge{utile?}}

\medskip
\parni{\bf 6)} Note that there is only a finite number of constants $q_M$ as in the definition of thickness. Indeed, we must have $q_{wM}=q_M$, $\forall w\in\qn(N)$ and $w.M(\qa,k)=M(w(\qa),k),\forall w\in W^v$. So now, fix $i\in I$, as $\qa_i(\qa_i^\vee)=2$ the translation by $\qa_i^\vee$ permutes the walls $M=M(\qa_i,k)$ (for  $k\in \Z$) with two orbits.
  So, $Q^\vee\subset W^a$ has at most two orbits in the set of the constants $q_{M(\qa_i,k)}$ : one containing the $q_i=q_{M(\qa_i,0)}$ and the other containing the $q_i'=q_{M(\qa_i,\pm{}1)}$.
  Hence, the number of (possibly) different $q_M$ is at most $2.\vert I\vert$. We denote this set of parameters by $\shq=\{q_i,q'_i\mid i\in I\}$.
  
\par  In \cite[1.4.5]{BPGR16} one proves the following further equalities:  $q_{i}= q_{i}'$ if $\qa_{i}(Y)=\Z$ and $q_{i}=q'_{i}=q_{j}=q'_{j}$ if $\qa_{i}(\qa_{j}^\vee)=\qa_{j}(\qa_{i}^\vee)=-1$.

We consider also the polynomial algebra $\Z[\SHQ]$, where $\SHQ$ is the set $\SHQ=\{Q_{i},Q'_{i} \mid i\in I\}$ of indeterminates, satisfying the same equalities: $Q_{i}= Q_{i}'$ if $\qa_{i}(Y)=\Z$ and $Q_{i}=Q'_{i}=Q_{j}=Q'_{j}$ if $\qa_{i}(\qa_{j}^\vee)=\qa_{j}(\qa_{i}^\vee)=-1$.
See \cite[6.1]{BPGR16} where $Q_{i}=\qs_{i}^2, Q'_{i}=(\qs'_{i})^2$.

\cache{\par If $\qa_i(\qa_j^\vee)$ is odd for some $j\in I$, the translation by $\qa_j^\vee$ exchanges the two walls $M(\qa_i,0)$ and $M(\qa_i,-\qa_i(\qa_j^\vee))$; so $q_i=q'_i$.
If $\qa_i(\qa_j^\vee)=\qa_j(\qa_i^\vee)=-1$, one knows that the element $r_ir_jr_i$ of $W^v(\{i,j\})$ exchanges $\qa_i$ and $-\qa_j$, so $q_i=q'_i=q_j=q'_j$.}

 \medskip
  \par\noindent{\bf 7) Examples.} The main examples of all the above situation are provided by the  Kac-Moody theory, as already indicated in the introduction.
  More precisely let $G$ be an almost split Kac-Moody group over a non archimedean complete field $\shk$.
  We suppose moreover the valuation of $\shk$ discrete and its residue field $\qk$ perfect.
Then there is a masure $\SHI$ on which $G$ acts strongly transitively by vectorially Weyl automorphisms.
If $\shk$ is a local field (\ie $\qk$ is finite), then we are in the situation described above.
This is the main result of \cite{Ch10}, \cite{Ch11} and \cite{R13}.

\par When $G$ is actually split, this result was known previously by \cite{GR13} and \cite{R12}.
And in this case all the constants $q_{M}, q_{i}, q'_{i}$ are equal to the cardinality $q$ of the residue field $\qk$.

\par We gave in \cite[6.7]{BPGR16} a proof of  conjecture 1 for this split case; see also \cite{Mu15}.
Actually these proofs are proofs of conjecture 2, as the polynomials $a^ {\mathbf u}_{\mathbf w,\mathbf v}$ are Laurent polynomials inherited from the description of ${^I\!}\SHH$ as a specialization of the associative Bernstein-Lusztig algebra over $\Z[\SHQ]$: the algebra ${^I\!}\SHH_{\Z[\SHQ]}$  over $\Z[\SHQ]$ defined by these structure constants on the basis $(T_{\mathbf w})_{\mathbf w\in W^+}$ is associative.
 
\medskip
  \par\noindent{\bf 8) Remark.}
  All isomorphisms in \cite{R11} are Weyl-isomorphisms, and, when $G$ is strongly transitive, all isomorphisms constructed in \lc are induced by an element of $G$.

  \subsection{Type $0$ vertices}\label{1.4}

  The elements of $Y$, through the identification $Y=N.0 \,\subset\A$, are called {\it vertices of type $0$} in $\A$; they are special vertices. We note $Y^+=Y\cap\sht$, $Y^ {+g}=Y\cap(\sht^\circ\cup V_{0})$, $Y^ {+0}=Y\cap V_{0}$ and $Y^{++}=Y\cap \overline{C^v_f}$.
   The type $0$ vertices in $\SHI$ are the points on the orbit $\SHI_0$ of $0$ by $G$. This set $\SHI_0$ is often called the affine Grassmannian as it is equal to $G/K$, where $K =$ Stab$_G(\{0\})$. But in general, $G$ is not equal to $KYK=KNK$ \cite[6.10]{GR08} \ie $\SHI_0\not=K.Y$.

   \par We know that $\SHI$ is endowed with a $G-$invariant preorder $\leq $ which induces the known one on $\A$.
    Moreover, if $x\leq y$, then $x$ and $y$ are in a same apartment.
    
   We set $\SHI^+=\{x\in\SHI\mid0\leq x\}$ , $\SHI^+_0=\SHI_0\cap\SHI^+$, $G^ {+}=\{g\in G\mid0\leq g.0\}$ and $G^ {+g}=\{g\in G\mid0\stackrel{o}{\leq} g.0\}$; so $\SHI^+_0=G^+.0=G^+/K$.
   As $\leq $ (\resp $\stackrel{o}{\leq}$) is a $G-$invariant preorder, $G^+$ (\resp $G^ {+g}$) is a semigroup, called the {\it Kac-Moody-Tits semigroup} (\resp the {\it generic Kac-Moody-Tits semigroup}).
   
\par One has $G^+=KNK$; more precisely the map $Y^ {++}\to K\backslash G^+/K$ is a bijection, if we identify $\ql\in Y^ {++}\subset W^v\ltimes Y=W=N/\ker\qn$ with its class in $N$ modulo $\ker\qn\subset K$.
Clearly $G^{+g}=K(Y^ {++}\cap Y^ {+g})K$.

\subsection{Vectorial distance}\label{1.5}

    For $x$ in the Tits cone $\sht$, we denote by $x^{++}$ the unique element in $\overline{C^v_f}$ conjugated by $W^v$ to $x$.

    \par  Let $\SHI\times_\leq \SHI=\{(x,y)\in\SHI\times\SHI\mid x\leq y\}$ be the set of increasing pairs in $\SHI$.
    Such a pair $(x,y)$ is always in a same apartment $g.\A$; so $(g^{-1}).y-(g^{-1}).x\in\sht$ and we define the {\it vectorial distance} $d^v(x,y)\in  \overline{C^v_f}$ by $d^v(x,y)=((g^{-1}).y-(g^{-1}).x)^{++}$.
    It does not depend on the choices we made (by \ref{1.11}.1 below).
    
 \cache{ If $(x,y)\in \SHI\times_\leq \SHI$ and $0≠d^v(x,y)=\ql\in  \overline{C^v_f}$, we say that $[x,y)$ (resp. $[y,x)$) is a segment germ of type $\ql$ (resp. $-\ql$).
 We write $\SHS^0_{\pm\ql}$ the set of segment germs of type $\pm\ql$ in $\SHI$, with origin in $\SHI_0$.
 Clearly $\SHS^0_{\pm\ql}=\SHS^0_{\pm k.\ql}$ for any $k\in\R_{>0}$.
 }

    \par For $(x,y)\in \SHI_0\times_\leq \SHI_0=\{(x,y)\in\SHI_0\times\SHI_0\mid x\leq y\}$, the vectorial distance $d^v(x,y)$ takes values in $Y^{++}$.
     Actually, as $\SHI_0=G.0$, $K$ is the  stabilizer of $0$ and $\SHI^+_0=K.Y^{++}$ (with uniqueness of the element in $Y^{++}$), the map $d^v$ induces a bijection between the set $\SHI_0\times_\leq \SHI_0/G$ of $G-$orbits in $\SHI_0\times_\leq \SHI_0$ and $Y^{++}$.

     \par Further, $d^v$ gives the inverse of the  map $Y^{++}\to K\backslash G^+/K$, as any $g\in G^+$ is in $K.d^v(0,g.0).K$.

 \cache{    \par For $x,y\in\A$, we say that $x\leq _{Q^\vee}\,y$ (resp. $x\leq _{Q^\vee_\R}\,y$) when $y-x\in Q^\vee_+$ (resp. $y-x\in Q^\vee_{\R+}=\sum_{i\in I}\,\R_{\geq 0}.\qa_i^\vee$).
     We get thus  an order on $\A$ (as we supposed $(\qa_i^\vee)_{i\in I}$  free): the $Q^ {\vee}-$order (\resp the $Q^\vee._{\R}-$order).  }

\subsection{Paths and retractions}
\label{suse:Paths}

We consider piecewise linear continuous paths
$\pi:[0,1]\rightarrow \mathbb A$ such that each (existing) tangent vector $\pi'(t)$
belongs to an orbit $W^v.\lambda$ for some $\lambda\in {\overline{C^v_f}}$. Such a path is called a {\it $\lambda-$path}; it is
increasing with respect to the preorder relation $\leq$ on $\mathbb A$.
If $\ql\in \ov{C} ^v_{f}\cap(\sht^\circ\cup V_{0})$, then it is increasing for $\stackrel{o}{\leq}$.

 For any $t\neq 0$ (resp. $t \neq1$), we let
$\pi'_-(t)$ (resp. $\pi'_+(t)$) denote the derivative of $\pi$ at $t$ from the left
(resp. from the right). Further, we define $w_\pm(t)\in W^v$ to be the smallest
element  in its
$(W^v)_\lambda-$class such that $\pi'_\pm(t)=w_\pm(t).\lambda$ (where $(W^v)_\lambda$ is the  stabilizer in $W^v$ of $\lambda$).

\par 
Moreover, we denote by $\pi_-(t)=\pi(t)-[0,1)\pi_-'(t)=[\pi(t),\pi(t-\varepsilon)\,)$ (\resp $\pi_+(t)=\pi(t)+[0,1)\pi_+'(t)=[\pi(t),\pi(t+\varepsilon)\,)$ (for $\varepsilon>0$ small) the negative (\resp positive) segment-germ of $\pi$ at $t$, for $0<t\leq1$ (\resp $0\leq t<1$).

\bigskip

\par Let $C_{z}$ (\resp $\g S$) be a local chamber with vertex $z$ (\resp a sector germ) in an apartment $A$ of $\SHI$.
For all $x\in \SHI_{\geq z}=\{y\in\SHI\mid y\geq z\}$ (\resp $x\in \SHI$) there is an apartment $A'$ containing $x$ and $C_{z}$ (\resp $\g S$).
And this apartment is conjugated to $A$ by an element of $G$ fixing $C_{z}$ (\resp $\g S$) (\cf \ref{1.3}.1.a and \ref{1.3}.4).
 So, by the usual arguments we can define the retraction $\qr=\qr_{A,C_{z}}$ from $\SHI_{\geq z}$ (\resp $\qr=\qr_{A,\g S}$ from 
 $\SHI$) onto the apartment $A$ with center $C_{z}$ (\resp  $\g S$).

 For any such retraction $\qr$, the image of any segment $[x,y]$ with $(x,y)\in\SHI\times_\leq \SHI$ and $d^v(x,y)=\ql\in\overline{C^v_f}$ (with moreover $x,y\in\SHI_{\geq z}$ if $\qr=\qr_{A,C_{z}}$) is a $\ql-$path     \cite[4.4]{GR08}. In particular, $\qr(x)\leq \qr(y)$.
 By definition, if $A'$ is another apartment containing $\g S$ (\resp  $C_{z}$), then $\qr$ induces an isomorphism from $A'$ onto $A$.
    As we assume the existence of the strongly transitive group $G$, this isomorphism is the restriction of an automorphism of $\SHI$.

\subsection{Preordered convexity}\label{1.11}

\cache{In this section we recall the preordered convexity of intersections of apartments and deduce from that result some consequences for chambers, in particular the definitions of the projections of a chamber on a point and on a \red{generic} germ of segment. 

\medskip
\parni {\bf 1) Preordered convexity  property.} }

Let $\mathscr C^{\pm}$ (\resp $\mathscr C_0^{\pm}$) be the set of all local chambers of direction $\pm$ (\resp with moreover vertices of type $0$).
A positive (resp. negative) local chamber of vertex $x\in\SHI$  will often be written  $C_x$ (\resp $C_x^-$) and its direction $C_x^v{=\vect {C_x}}$ (\resp $C_x^{-\,v}{=\vect {C_x^-}}$). 
We consider the set $\mathscr C^+\times_\leq \mathscr C^+=\{(C_x,C_y)\in \mathscr C^+\times \mathscr C^+\mid x\leq y\}$ (\resp $\mathscr C^+\times_{\leq}^\circ\mathscr C^+=\{(C_x,C_y)\in \mathscr C^+\times \mathscr C^+\mid x \stackrel{o}{\leq} y\}$).
 We sometimes write $C_x\leq C_y$ (\resp $C_x\stackrel{o}{\leq} C_y$) when $x\leq y$ (\resp $x\stackrel{o}{\leq}y$).

\begin{prop*}\label{New.1.11}   Let $x,y\in\SHI$ with $x\leq y$.
We consider two local faces $F_x,F_y$ with respective vertices $x,y$. Then

\par (a) $F_x$ and $F_y$ are contained in a same apartment.

\par (b) If $A,B$ are two apartments containing $\{x,y\}$ (\resp $F_x\cup F_y$), then there is a Weyl-isomorphism from $A$ onto $B$, fixing the enclosure $cl_A^\#(\{x,y\})=cl_{B}^\#(\{x,y\})\supset[x,y]$ (\resp the closed convex hull $\ov{conv}_A(F_x\cup F_y)=\ov{conv}_B(F_x\cup F_y)$).
\end{prop*}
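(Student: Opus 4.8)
The plan is to reduce both parts to the known structural axioms (MA ii), (MA iii) and their consequences a)--d) of \ref{1.3}, together with the segment-germ/sector-germ machinery of \ref{suse:Paths} and \ref{suse:Faces}.

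\textbf{Part (a).} First I would treat the generic case $x\stackrel{o}{<}y$ and then the general preordered case $x\le y$. When $x\stackrel{o}{<}y$, the segment germ $[x,y)$ is generic, so the ray $[y,x($ issued from $y$ lies in $-\sht^\circ$; taking a spherical sector-face germ $\g F$ at $y$ containing (the germ of) this ray and applying \ref{1.3}.b to the pair $(\g F, F_x)$ produces an apartment $A$ containing both $\g F$ and $F_x$. But $F_y$ need not lie in $A$ a priori; to fix this I would instead use \ref{1.3}.b with the germ of a splayed chimney along $[y,x)$ that also contains $F_y$ in its combinatorial closure, or, more safely, argue in two steps: find $A$ containing $F_x$ and the germ $[x,y)$ (hence $y$ and a neighbourhood of $[x,y]$ in $A$), then inside $A$ observe $F_y$ is a face at $y$; if $F_y\not\subset A$ apply \ref{1.3}.a at the point $y$ (or at a spherical face) to transport. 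For the general case $x\le y$, I would approximate: since $y-x\in\sht$, one can write $[x,y]$ as a concatenation, or use a path-theoretic argument via \ref{suse:Paths} showing $x$ and $y$ lie in a common apartment (this is already recalled in \ref{1.4} for points), and then upgrade from points to the faces $F_x,F_y$ using consequence \ref{1.3}.a applied successively at $x$ and at $y$. The cleanest route is: by \ref{1.4}, $x$ and $y$ lie in a common apartment $A_0$; then $F_x$ is a local face at $x$, possibly not in $A_0$, but \ref{1.3}.a (fixing the point $x$, or fixing a panel/face) lets us replace $A_0$ by an apartment $A_1\ni F_x$ still containing $y$; one more application at $y$ yields the desired apartment containing $F_x\cup F_y$.

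\textbf{Part (b).} Given two apartments $A,B$ both containing $\{x,y\}$ (resp.\ $F_x\cup F_y$), I would invoke consequence \ref{1.3}.a: the intersection $A\cap B$ is closed and contains the enclosure $cl_A^\#(\{x,y\})$ (resp.\ $cl_A^\#(F_x\cup F_y)$), and there is a Weyl-isomorphism $A\to B$ fixing that enclosure pointwise. The identification $cl_A^\#(\{x,y\})=cl_B^\#(\{x,y\})$ is automatic because the enclosure is intrinsic to the filter and preserved by the isomorphisms defining the apartment structures. That $[x,y]\subset cl_A^\#(\{x,y\})$ is clear since the segment lies in every half-apartment containing its endpoints. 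For the face version, I would note that $cl_A^\#(F_x\cup F_y)$ contains the closed convex hull $\overline{conv}_A(F_x\cup F_y)$ (a finite intersection of half-apartments is convex and closed and contains $F_x,F_y$, hence their convex hull), and conversely any point of the hull lies in $A\cap B$ by closedness of the intersection plus the fact that the isomorphism fixing $F_x\cup F_y$ fixes their closures by \ref{1.3}.a; this gives $\overline{conv}_A(F_x\cup F_y)=\overline{conv}_B(F_x\cup F_y)$ as a subset of $A\cap B$ fixed by the Weyl-isomorphism.

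\textbf{Main obstacle.} The delicate point is part (a) in the non-generic (merely preordered, possibly with $V_0\neq\{0\}$ or infinite factors) case, where one does not have a spherical sector-face germ pointing from $y$ back towards $x$, so (MA iii)/\ref{1.3}.b cannot be applied directly to the pair $(F_x,F_y)$. The workaround is to first place the two \emph{points} in a common apartment using the already-recalled results of \ref{1.4} (the $G$-invariant preorder on $\SHI$ with the property that $x\le y$ implies $x,y$ are in a common apartment), and only afterwards promote points to faces via the "transport" consequence \ref{1.3}.a, being careful that each transport step fixes enough of the configuration (the point, or a panel through it) so that the previously arranged face is not lost. Once (a) is secured, (b) is essentially a direct citation of \ref{1.3}.a together with elementary convexity of enclosures.
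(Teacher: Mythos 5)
Note first that the paper does not prove this proposition at all: it is quoted from A.~H\'ebert \cite[5.17, 5.18]{He17a} (see also \cite[4.4.16, 4.4.17]{Heb18}), with the point case of (b) traced back to \cite[5.4]{R11} plus the replacement of $cl$ by $cl^\#$ from \cite[5.1]{He17a}. So the real question is whether your derivation from the axioms and from the consequences a)--d) of \ref{1.3} is sound, and there is a genuine gap in part (a). Your key step is: ``$F_x$ is a local face at $x$, possibly not in $A_0$, but \ref{1.3}.a lets us replace $A_0$ by an apartment $A_1\ni F_x$ still containing $y$.'' But \ref{1.3}.a is a \emph{rigidity} statement: it compares two apartments that \emph{already both contain} a given filter $F$ and produces an isomorphism fixing $cl^\#_A(F)$; it never produces a new apartment containing a larger configuration. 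The only \emph{existence} tool available is (MA~iii)/\ref{1.3}.b, and it requires one of the two filters to be the germ of a splayed chimney or of a spherical sector face --- i.e.\ a filter ``at infinity''. A local face $F_x$ or $F_y$ with vertex in $\SHI$ is not of this kind, and your fallback (a spherical sector-face germ or chimney germ at $y$ pointing towards $x$) does not help either, because such a germ only forces the apartment to contain a translate of the sector face far from $y$: the resulting apartment need not contain $y$, let alone $F_y$. This is precisely why the statement is a nontrivial theorem of H\'ebert (his proof is a genuine induction using chimneys and the preorder, not a formal consequence of a)--d)), and why the earlier versions in \cite[5.4, 5.1]{R11} and \cite[1.10]{BPGR16} carried restrictive hypotheses (sphericity, or $x\stackrel{o}{<}y$).

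Part (b) has a smaller but real gap of the same nature: \ref{1.3}.a applies to a single filter taken from the list ``point, preordered segment, local face, spherical sector face'', so it gives the $\{x,y\}$ case (via the preordered segment $[x,y]$, whose $cl^\#$ equals that of $\{x,y\}$), but it does not apply to the union $F_x\cup F_y$. Your inclusion $\ov{conv}_A(F_x\cup F_y)\subset cl^\#_A(F_x\cup F_y)$ is correct, but the assertions that $A\cap B$ contains this enclosure and that some Weyl-isomorphism fixes it are exactly the content of \cite[5.18]{He17a}, not something you may presuppose. In short: you have correctly located the delicate point (promoting points to faces in the merely preordered case), but the workaround you propose rests on reading an existence statement into \ref{1.3}.a that it does not contain, so the argument does not close; the honest proof here is the citation.
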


\par This improvement of results in \cite[5.4, 5.1]{R11} and \cite[1.10]{BPGR16} is proved by Auguste Hébert: \cite[5.17, 5.18]{He17a}, see also \cite[4.4.16, 4.4.17]{Heb18}.
In b) the case of $\{x,y\}$ is proved in \cite[5.4]{R11} as, by \cite[5.1]{He17a} or \cite[4.4.1]{Heb18}, one may replace $cl$ by $cl^ {\#}$.
This property is called the {\it preordered convexity} of intersections of apartments.

\medskip

\parni{\bf Consequence.} We define $W^+ = W^v\ltimes Y^+$ (\resp $W^ {+g} = W^v\ltimes Y^ {+g}$) which is a subsemigroup of $W$, and call it the {\it Tits-Weyl} (\resp {\it generic Tits-Weyl}) {\it semigroup}.
An element $\mathbf{w}\in W^ {+g}$ is called {\it generic} (in a large sense) and {\it spherical} if, moreover, $\ql\in \sht^\circ\cap Y^+$.

\par {Let $\qe,\eta\in\{+,-\}$.}
If $C_x^{{\qe}}\in\SHC_0^{{\qe}}$ {and $0≤x$}, we know by b) above, that there is an apartment $A$ containing $C_0^{{\eta}}$ and $C_x^{{\qe}}$.
But all apartments containing $C_0^{{\eta}}$ are conjugated to $\A$ by $K_I^{{\eta}}$ (by \ref{1.3}.1.a), so there is $k\in K_I^{{\eta}}$ with $k^{-1}.C_x^{{\qe}}\subset\A$.
 Now the vertex $k^{-1}.x\in\SHI_0$ of $k^{-1}.C_x^{{\qe}}$ satisfies $k^{-1}.x\geq0$, so there is $\mathbf{w}\in W^+$ such that $k^{-1}.C_x^{{\qe}}=\mathbf{w}.C_0^{{\qe}}$.

\par When $g\in G^+$, $g.C_0^{{\qe}}$ is in $\SHC_0^{{\qe}}$ and there are $k\in K_I^{{\eta}}$, $\mathbf{w}\in W^+$ with $g.C_0^{{\qe}}=k.\mathbf{w}.C_0^{{\qe}}$, \ie $g\in K_I^{{\eta}}.W^+.K_I^{{\qe}}$.
We have proved the {\it Bruhat decompositions} $G^+=K_I^{\pm}.W^+.K_I^{\pm}$
and the {\it Birkhoff decompositions} $G^+=K_I^\mp.W^+.K_I^\pm$.
 {For uniqueness, see \ref{1.13} below.}
 
 \par Similarly we have also $G^ {+g}=K_I^{\pm}.W^ {+g}.K_I^{\pm}$
and  $G^ {+g}=K_I^\mp.W^ {+g}.K_I^\pm$.

\begin{rema}\label{1.11c} \par If the generalized Cartan matrix $\M$ is of affine or strictly hyperbolic type (in the sense of \cite[4.3 or Ex. 4.1]{K90}), then any non spherical vectorial face is $w.F^v(I)=F^v(I)=V_{0}=\{ v\in V \mid \qa_{i}(v)=0, \forall i\in I\}$.
So the Tits cones satisfy $\sht=\sht^\circ\sqcup V_{0}$ and $Y^+=Y^ {+g}$, $W^+=W^ {+g}$ .
\end{rema}

\subsection{$W-$distance}\label{1.13}

Let $(C_x,C_y)\in\mathscr C_0^+\times_\leq \mathscr C_0^+$, there is an apartment $A$ containing $C_x$ and $C_y$.
We identify $(\A,C_0^+)$ with $(A,C_x)$ \ie we consider the unique $f\in Isom^W_\R(\A,A)$ such that $f(C_0^+)=C_x$.
 Then $f^{-1}(y)\geq 0$ and there is $\mathbf{w}\in W^+$ such that  $f^{-1}(C_y)=\mathbf{w}.C_0^+$.
 By \ref{1.11}.b, $\mathbf{w}$ does not depend on the choice of $A$.

\par We define the {\it $W-$distance} between the two local chambers $C_x$ and $C_y$ to be this unique element: $d^W(C_x,C_y) = \mathbf{w}\in W^+ = Y^+\rtimes W^v$.
 If $\mathbf{w}=\ql.w$, with $\ql\in Y^+$ and $w\in W^v$, we write also $d^W(C_x,y)=\ql$; it implies $d^v(x,y)=\ql^ {++}$.
 As $\leq$ is $G-$invariant, the $W-$distance is also $G-$invariant.
 When $\mathbf{w}=w\in W^v$ and $w=r_{i_{1}}.\cdots.r_{i_{r}}$ is a reduced decomposition, we have $d^W(C_{x},C_{y})=w$ if and only if there is a minimal gallery (of local chambers in $\sht^+_x\SHI$) from $C_{x}$ to $C_{y}$ of type $(i_{1},\ldots,i_{r})$, in particular $x=y$. 
When $x=y$, this definition coincides with the one in \ref{1.3}.2.

\cache{ \par If $C_x,C_y,C_z\in\SHC_0^+$, with $x\leq y\leq z$, are in a same apartment, we have the Chasles relation: $d^W(C_x,C_z)=d^W(C_x,C_y).d^W(C_y,C_z)$.}

\par Let us consider an apartment $A$ and local chambers $C_x,C_y,C_z\in\SHC_0^+$ included in $A$.
If $d^W(C_{x},C_{y})=\mathbf{w}$, we write $C_{y}=C_{x}*\mathbf{w}$.
Conversely, for any $\mathbf{w}\in W^+$, there is a unique  local chamber $C_{z}=C_{x}*\mathbf{w}$ in $A$ such that $d^ {W}(C_{x},C_{z})=\mathbf{w}$; actually $C_{x}*\mathbf{w}$ depends on $A$, but not on an identification of $A$ with $\A$.
For $x\leq y\leq z$, we have (in $A$) the Chasles relation: $d^W(C_x,C_z)=d^W(C_x,C_y).d^W(C_y,C_z)$; \ie $(C_{x},\mathbf{w}) \mapsto C_{x}*\mathbf{w}$ is a right action of the semi-group $W^+$.
When $(A,C_{x})$ is identified with $(\A,C^+_{0})$, one has $C_{x}*\mathbf{w}=\mathbf{w}C_{x}$.

\par When $C_x=C_0^+$ and $C_y=g.C_0^+$ (with $g\in G^+$), $d^W(C_x,C_y)$ is the only $\mathbf{w}\in W^+$ such that $g\in K_I.\mathbf{w}.K_I$.
This is the uniqueness result in Bruhat decomposition: $G^+=\coprod_{\mathbf{w}\in W^+}\,K_I.\mathbf{w}.K_I$.
Similarly we have $G^ {+g}=\coprod_{\mathbf{w}\in W^ {+g}}\,K_I.\mathbf{w}.K_I$.

The $W-$distance classifies  the orbits of $K_I$ on $\{C_y\in\SHC_0^+\mid y\geq 0\}$, hence also the orbits of $G$ on $\mathscr C_0^+\times_\leq \mathscr C_0^+$.


\subsection{Iwahori-Hecke Algebras}\label{s2}

We consider any commutative ring with unity  $R$.
The {\it Iwahori-Hecke algebra}   $^I\mathcal H_R$  associated to $\SHI$ with coefficients in $R$ introduced in \cite{BPGR16} is as follows:
 
 To each $\mathbf w\in W^+$, we associate a function $T_{\mathbf w}$ from  $\mathscr C_0^+\times_\leq \mathscr C_0^+$ to $R$ defined by  
$$
T_{\mathbf w}(C,C') =
\left\{
\begin{array}{l}
1 \quad\hbox{ if } d^W(C,C') = \mathbf w,\\
0 \quad\hbox{ otherwise.}
\end{array}
\right.
$$
 The Iwahori-Hecke algebra   $^I\mathcal H_R$  is the free $R-$module
$$\{\sum_{\mathbf w\in W^+} a_{\mathbf w}T_{\mathbf w}\mid a_{\mathbf w}\in R,\ a_{\mathbf w} = 0 \hbox{ except for a finite number of } \mathbf w\},
$$  endowed with the convolution product:

$$
(\qf*\psi)(C_x,C_y)=\sum_{C_z}\,\qf(C_x,C_z)\psi(C_z,C_y).
$$ where $C_z\in \mathscr C^+_0$ is such that $x\leq z \leq y$. 

\par Actually, $^I\mathcal H_R$ can be identified with the natural convolution algebra of the functions $G^+\to R$, bi-invariant under $K_I$ and with finite support (in $K_{I}\backslash G^+/K_{I}$); this is the definition given in the introduction.

\par More precisely $^I\mathcal H_R$ is the space of  functions $\qf: \mathscr C_0^+\times_\leq \mathscr C_0^+ \to R$, that are left $G-$invariant and with support a finite union of orbits (see the last alinea of \ref{1.13}).
To a $\qf \in {^I\mathcal H_R}$ is associated $\qf^G: K_{I}\backslash G^+/K_{I} \to R$ such that $\qf^G(g)=\qf(C_{0}^+,g.C_{0}^+)$.
So, for $\qf,\psi \in {^I\mathcal H_R}$, 

\medskip
\par $(\qf*\psi)^G(g)=(\qf*\psi)(C_{0}^+,g.C_{0}^+)=\sum_{C_{z}}\,\qf(C_{0}^+,C_{z})\psi(C_{z},g.C_{0}^+)$

$\qquad\qquad\quad=\sum_{h\in G^+/K_{I}}\,\qf(C_{0}^+,h.C_{0}^+)\psi(h.C_{0}^+,g.C_{0}^+)$

$\qquad\qquad\quad=\sum_{h\in G^+/K_{I}}\,\qf(C_{0}^+,h.C_{0}^+)\psi(C_{0}^+,h^ {-1}g.C_{0}^+)=\sum_{h\in G^+/K_{I}}\,\qf^G(h)\psi^G(h^ {-1}g)$:

\medskip
\parni we get the convolution product (in the classical case, we take a Haar measure on $G$ with $K_{I}$ of measure $1$).

\medskip
\par One considers also the subspace $^I\mathcal H_R^g=\sum_{\mathbf w\in W^ {+g}} R.T_{\mathbf w}$.
From \ref{4.3} and Remark \ref{sc7}.2 one sees that it is a subalgebra of $^I\mathcal H_R$.
We call it the {\it generic Iwahori-Hecke algebra} associated to $\SHI$ with coefficients in $R$.
From \ref{1.11c} one has $^I\mathcal H_R=^I\mathcal H_R^g$ in the affine or strictly hyperbolic cases.

We recall now some useful results of  \cite{BPGR16} in order to introduce the structure constants and a way to compute them.

\begin{prop}\cite[2.3]{BPGR16}\label{PrFinite2}

Let us fix two local chambers $C_x$ and $C_y$ in $\mathscr C_0^+$ with $x\leq y$ and $d^W(C_x,C_y) = \mathbf u\in W^+$. We consider $\mathbf w$ and $\mathbf v$ in $W^+$. Then the number $a_{\mathbf w,\mathbf v}^{\mathbf u}$ of $C_z\in \mathscr C_0^+$ with $x\leq z\leq y$, $d^W(C_x,C_z) = \mathbf w$ and $d^W(C_z,C_y) = \mathbf v$ is finite (i.e. in $\mathbb N$).

\cache{If we assume $\mathbf w = \lambda$, $\mathbf v = \mu$ and $\mathbf u = \nu$, then $a_{\mathbf w,\mathbf v}^{\mathbf u} = a_{\lambda,\mu}^\nu \geq 1$ (resp. $=1$) when $\ql\in Y^{++}$, $\qm\in Y^{+}$ (resp. $\ql,\qm\in Y^{++}$) and $\qn=\ql+\qm$.}
\end{prop}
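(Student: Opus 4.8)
The result is \cite[2.3]{BPGR16}; I outline the argument. The plan is to flatten every competitor chamber into one fixed apartment by a single retraction, and then to bound the combinatorial data recording how it got folded, the control coming ultimately from finite thickness.

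First, by the preordered convexity of intersections of apartments (\ref{1.11}) there is an apartment $A$ containing $C_x$ and $C_y$; identifying $(A,C_x)$ with $(\A,C_0^+)$ by the unique $f\in Isom^W_\R(\A,A)$ with $f(C_0^+)=C_x$ we get $C_y=\mathbf u.C_0^+$. Write $\mathbf w=\ql_{\mathbf w}.w$ and $\mathbf u=\ql_{\mathbf u}.u$ with $\ql_{\mathbf w},\ql_{\mathbf u}\in Y^+$ and $w,u\in W^v$. Let $\qr=\qr_{A,C_x}$ be the retraction of $\SHI$ onto $A$ with centre $C_x$; by \ref{1.3}.1.a and \ref{1.3}.4 it is defined on every local face with vertex $\geq x$, hence on each $C_z$ occurring in the count and on $C_y$. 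Since, on any apartment through $C_x$, $\qr$ restricts to a Weyl-isomorphism fixing $C_x$, it preserves the $W$-distance from $C_x$; using the uniqueness in \ref{1.13} we conclude that $\qr(C_z)$ is the unique chamber of $A$ at $W$-distance $\mathbf w$ from $C_0^+$, so in particular $\qr(z)=\ql_{\mathbf w}$ is a single point, and likewise $\qr(C_y)=\mathbf u.C_0^+$, $\qr(y)=\ql_{\mathbf u}$.

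Next I would attach to each $C_z$ the $\qr$-image of the broken segment $[x,z]\cup[z,y]$. By \cite[4.4]{GR08} (\cf \ref{suse:Paths}) this is a path in $A$ from $0$ to $\ql_{\mathbf u}$ passing through $\ql_{\mathbf w}$, whose first part $\qr([x,z])$ is a $\ql_{\mathbf w}^{++}$-path, in fact a Hecke path, of the fixed type $d^v(x,z)=\ql_{\mathbf w}^{++}$ from $0$ to $\ql_{\mathbf w}$, and whose second part $\qr([z,y])$ is a $d^v(z,y)$-path from $\ql_{\mathbf w}$ to $\ql_{\mathbf u}$; the type $d^v(z,y)$ runs over a finite set, since $d^v(x,z)$ and $d^v(x,y)$ are fixed (finiteness of the support of the spherical product, \cite{GR13}). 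Then one checks that the assignment $C_z\mapsto\qr([x,z]\cup[z,y])$ has finite image --- for each admissible pair of types, the number of such paths with the three prescribed vertices $0,\ql_{\mathbf w},\ql_{\mathbf u}$ is finite, because a retract of a geodesic of a given type between fixed points of $A$ has boundedly many linear pieces and folding points --- and finite fibres --- lifting a given path $\pi$ step by step through $\SHI$, at each of the finitely many walls crossed by $\pi$ there are at most $q_M$ admissible continuations by finite thickness (\ref{1.3}.e), while the remaining freedom in the direction of $C_z$ at $z$ making $d^W(C_x,C_z)=\mathbf w$ and $d^W(C_z,C_y)=\mathbf v$ is controlled by the same bounded gallery data. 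Multiplying the finitely many choices at each stage gives $a_{\mathbf w,\mathbf v}^{\mathbf u}\in\N$.

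The hard part is the finiteness of the image: one cannot drop the hypothesis $z\leq y$, since $\{C_z\mid d^W(C_x,C_z)=\mathbf w\}$ is already infinite as soon as $\ql_{\mathbf w}\neq 0$; what rescues finiteness is that, after retraction, $\qr(z)=\ql_{\mathbf w}$ is pinned between $0$ and $\ql_{\mathbf u}$, and one must invoke the geometry of the Tits cone to turn this boundedness into a bound on the number of folds. This is precisely the kind of estimate the decorated Hecke paths of Section \ref{pr} are meant to supply.
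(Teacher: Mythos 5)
The paper itself gives no proof here: the statement is quoted from \cite[Prop.~2.3]{BPGR16}, so there is nothing internal to compare against line by line. Your overall strategy --- flatten every competitor onto a fixed apartment by a retraction, bound the set of possible images, bound the fibres by finite thickness --- is indeed the strategy of that reference, and your preliminary reductions are correct (in particular the observation that $\qr(C_z)=\mathbf w.C_0^+$, so $\qr(z)=\ql_{\mathbf w}$ is a single pinned point, and your remark that the hypothesis $z\leq y$ cannot be dropped). But there is a genuine gap precisely at the step you yourself flag as ``the hard part''. You work throughout with the retraction $\qr_{A,C_x}$ centred at the \emph{local chamber} $C_x$. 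For its image of a preordered segment to be a Hecke path --- which is what gives both the finiteness of the set of possible retracted paths (finitely many folds, all located on walls, subject to the chain conditions) and the finiteness of the fibres (Theorem \ref{sc4}) --- one needs the projections $pr_p(C_x)$ along the path, hence the genericity hypothesis $x\stackrel{o}{<}p$; see \ref{sc3} and \ref{sc3b}.2. The proposition makes no such hypothesis: for $\mathbf w\in W^+\setminus W^{+g}$ the vertex $z$ need not satisfy $x\stackrel{o}{<}z$, and the machinery you invoke is then simply unavailable. Your closing appeal to ``the decorated Hecke paths of Section \ref{pr}'' does not repair this: decorations are introduced for the refined computation of $a^{\mathbf u}_{\mathbf w,\mathbf v}$ in Theorem \ref{sc7} and themselves require $\ql$ and $\qm$ spherical, so they cannot supply the missing finiteness in general.

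The way around this (and, as Remark \ref{sc6b}.6 indicates, the way \cite{BPGR16} proceeds) is to split the count into two unconditional steps. First, the number of admissible vertices $z\in\SHI_0$ with $x\leq z\leq y$, $d^v(x,z)=\ql_{\mathbf w}^{++}$ and $d^v(z,y)=\qm_{\mathbf v}^{++}$ is finite: one retracts $[z,y]$ onto $A$ with centre a \emph{negative sector germ}, for which images of preordered segments are Hecke paths in the sense of \cite{GR13} with no genericity hypothesis whatsoever, and the path-counting and lifting-counting arguments then apply. Second, for each fixed such $z$, any $C_z$ with $d^W(C_z,C_y)=\mathbf v$ lies at one prescribed $W^v$-distance from the fixed chamber $pr_z(C_y)$ in the residue building $\sht_z^+\SHI$, and a building of finite thickness has only finitely many chambers at a given $W^v$-distance from a given one (at most $\prod q_{i_j}$ over a reduced word). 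You gesture at this second step (``the remaining freedom in the direction of $C_z$ \dots is controlled by the same bounded gallery data'') but never isolate it; making it explicit is exactly what allows the vertex count and the chamber count to be handled by two separate arguments, neither of which needs $\mathbf w$ or $\mathbf v$ to be generic.
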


\begin{theo}\cite[2.4]{BPGR16}\label{ThAlgebra}

For any ring $R$, $^I\mathcal H_R$ is an algebra with identity element $Id = T_1$ such that
$$
T_{\mathbf w} * T_{\mathbf v} = \sum_{\mathbf u\in P_{\mathbf w,\mathbf v}} a_{\mathbf w, \mathbf v}^{\mathbf u} T_{\mathbf u}
$$ where $P_{\mathbf w,\mathbf v}$ is a finite subset of  $W^+$, such that $a_{\mathbf w, \mathbf v}^{\mathbf u}=0$ for $\mathbf u\notin P_{\mathbf w,\mathbf v}$.

\end{theo}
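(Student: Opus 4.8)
The plan is to take Proposition~\ref{PrFinite2} as the basic finiteness input and to deduce the algebra structure as follows. First, for $\qf,\psi\in{}^I\mathcal H_R$ the function $\qf*\psi$ on $\mathscr C_0^+\times_\leq\mathscr C_0^+$ is well defined: writing $\qf$ and $\psi$ as finite $R$-combinations of the $T_{\mathbf w}$ and using bilinearity, one is reduced to evaluating $(T_{\mathbf w}*T_{\mathbf v})(C_x,C_y)$, which is the number of $C_z\in\mathscr C_0^+$ with $x\leq z\leq y$, $d^W(C_x,C_z)=\mathbf w$ and $d^W(C_z,C_y)=\mathbf v$; by Proposition~\ref{PrFinite2} this number is finite, and it is precisely $a_{\mathbf w,\mathbf v}^{\mathbf u}\in\N$. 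Since $T_{\mathbf w}$ and $T_{\mathbf v}$ are left $G$-invariant and the convolution formula is $G$-equivariant, $T_{\mathbf w}*T_{\mathbf v}$ is $G$-invariant, hence constant on the orbits of $G$ on $\mathscr C_0^+\times_\leq\mathscr C_0^+$; by the last lines of \ref{1.13} these orbits are the fibres of $d^W$, so $(T_{\mathbf w}*T_{\mathbf v})(C_x,C_y)$ depends only on $\mathbf u=d^W(C_x,C_y)$ and $T_{\mathbf w}*T_{\mathbf v}=\sum_{\mathbf u}a_{\mathbf w,\mathbf v}^{\mathbf u}T_{\mathbf u}$ holds as an identity of functions. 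The theorem then reduces to three points: $P_{\mathbf w,\mathbf v}=\{\mathbf u\mid a_{\mathbf w,\mathbf v}^{\mathbf u}\neq0\}$ is finite (so that $T_{\mathbf w}*T_{\mathbf v}$ really lies in ${}^I\mathcal H_R$), $T_1$ is a unit, and the product is associative.

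For the finiteness of $P_{\mathbf w,\mathbf v}$ — the heart of the matter — fix $C_x=C_0^+\subset\A$. If $a_{\mathbf w,\mathbf v}^{\mathbf u}\neq0$ there is a configuration $C_x,C_z,C_y$ with $d^W(C_x,C_z)=\mathbf w$, $d^W(C_z,C_y)=\mathbf v$ and $d^W(C_x,C_y)=\mathbf u$. As $x\leq z$, the pair $C_x,C_z$ lies in a common apartment, and all apartments containing $C_0^+$ are $K_I^+$-conjugate by \ref{1.3}.1.a; after a $K_I^+$-translation I may therefore assume $C_z=\mathbf w.C_0^+\subset\A$, with vertex $z=\ql$ where $\mathbf w=\ql.w$. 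In an apartment $A'$ containing $C_z$ and $C_y$ one has $C_y=C_z*\mathbf v$, so the vertex $y$ sits at a position relative to $z$ prescribed by $\mathbf v$ and by the direction of $C_z$. Retracting $[0,y]=[0,z]\cup[z,y]$ onto $\A$ by $\qr=\qr_{\A,C_0^+}$ (allowed since $0\leq z\leq y$), the image is a $d^v(0,y)$-path through $\qr(z)=\ql$ whose second half is a $d^v(z,y)$-path of the type attached to $\mathbf v$ issued from $\ql$ (see \ref{suse:Paths}); together with $y\geq z\geq 0$ and the discreteness of $Y$, this confines $\mathbf u=d^W(C_0^+,C_y)$ to a finite set. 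When $\mathbf w,\mathbf v$ are spherical, or more generally generic, this is transparent since the relevant $W^v$-orbits are then finite and the retracted paths stay in a bounded region of $\A$; in full generality it rests on the finer fact that folded galleries of a fixed bounded type issued from $C_0^+$ reach only finitely many chambers, the bounded type being read off from the bounded segment $[0,\ql]$ followed by a reduced word for $w$ (and similarly for $\mathbf v$). This is the step I expect to be the main obstacle.

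Finally, $T_1(C,C')=1$ exactly when $d^W(C,C')=1$, i.e. when $C=C'$, so $(T_1*\psi)(C_x,C_y)=\sum_{C_z}T_1(C_x,C_z)\psi(C_z,C_y)=\psi(C_x,C_y)$, and likewise $\psi*T_1=\psi$; thus $T_1$ is a two-sided identity. For associativity it suffices, by bilinearity, to compare $(T_{\mathbf w}*T_{\mathbf v})*T_{\mathbf s}$ and $T_{\mathbf w}*(T_{\mathbf v}*T_{\mathbf s})$ at a pair $(C_x,C_y)$: both count the pairs $(C_z,C_{z'})$ with $x\leq z\leq z'\leq y$, $d^W(C_x,C_z)=\mathbf w$, $d^W(C_z,C_{z'})=\mathbf v$ and $d^W(C_{z'},C_y)=\mathbf s$, all the partial summations being finite by Proposition~\ref{PrFinite2} together with the finiteness of $P_{\mathbf w,\mathbf v}$ and $P_{\mathbf v,\mathbf s}$ just proved, so the two iterated sums agree by reordering finitely many terms. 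Hence ${}^I\mathcal H_R$ is an associative, unital $R$-algebra and its multiplication is given by the stated formula with $P_{\mathbf w,\mathbf v}$ finite.
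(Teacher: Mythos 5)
This theorem is not proved in the present paper: it is quoted verbatim from \cite[2.4]{BPGR16}, so there is no internal proof to compare against. Measured against the argument in \emph{loc.\ cit.}, your overall architecture is the right one: Proposition \ref{PrFinite2} gives each coefficient $a^{\mathbf u}_{\mathbf w,\mathbf v}\in\N$, $G$-invariance plus the classification of $G$-orbits at the end of \ref{1.13} shows the product of two basis elements is a combination of the $T_{\mathbf u}$, and your treatment of the unit and of associativity (counting pairs $(C_z,C_{z'})$ and rearranging a finite sum of non-negative terms) is correct and standard.

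The gap is exactly where you place it, but it is worse than an omission: the justifications you offer for the finiteness of $P_{\mathbf w,\mathbf v}$ are false as stated. For $\mathbf v=\qm.v$ spherical, ``spherical'' means the \emph{stabilizer} $(W^v)_{\qm}$ is finite; when $W^v$ is infinite the orbit $W^v.\qm$ is infinite and unbounded, so the tangent directions of the retracted $\qm^{++}$-path range over an unbounded set and the path does \emph{not} a priori stay in a bounded region of $\A$. Discreteness of $Y$ therefore buys you nothing by itself, and the same objection applies to the $W^v$-component $u$ of $\mathbf u=\qn.u$: there are infinitely many local chambers at a given vertex of $\A$. The finiteness actually rests on the chain conditions defining Hecke paths (conditions (i)--(iv) in \ref{sc3}): each folding replaces $\pi'_-(t)$ by $r_{\qb}(\pi'_-(t))$ along a \emph{decreasing} chain in the orbit $W^v.\qm^{++}$, which bounds the number of foldings and the set of achievable direction sequences, hence the set of endpoints and of retracted end-chambers; this is the content of the lifting/counting results of \cite[\S 2--3]{BPGR16} (cf.\ Theorem \ref{sc4} and Remark 4 of \ref{sc7} here). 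Your closing appeal to ``folded galleries of a fixed bounded type reach only finitely many chambers'' is the correct mechanism, but it is precisely the statement that needs proof, so as written the argument does not establish that $T_{\mathbf w}*T_{\mathbf v}$ lies in $^I\mathcal H_R$.
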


\section{Projections and retractions}\label{pr}

In this section we introduce the new tools that we shall use in the next section to compute the structure constants of the Iwahori-Hecke algebra.

\subsection{Projections of chambers}
\label{sc0}

\medskip
\parni {\bf 1) Projection of a chamber $C_y$ on a point $x$.} 

 Let $x\in \SHI$, $C_y\in\SHC^+$ with $x\leq y$, $x\not=y$.
 We consider an apartment $A$ containing $x$ and $C_y$ (by \ref{1.11} (a) above) and write $C_y=F(y,C^v_y)$ in $A$.
 For $y'\in y+C^v_y$ sufficiently near to $y$, $\qa(y'-x)\not=0$ for any root $\qa$ and $y'-x\in\sht^\circ$.
 So $]x,y')$ is in a unique positive local chamber $pr_x(C_y)$ of vertex $x$; this chamber satisfies $[x,y)\subset \overline{pr_x(C_y)}\subset cl_A(\{x,y'\})$ and does not depend of the choice of $y'$.
 Moreover, if $A'$ is another apartment containing $x$ and $C_y$, we may suppose $y'\in A\cap A'$ and $]x,y')$, $cl_A(\{x,y'\})$, $pr_x(C_y)$ are the same in $A'$.
 The  local chamber $pr_x(C_y)$ is well determined by $x$ and $C_y$, it is the {\it projection} of $C_y$ in
$\sht^+_x\SHI$.

\par The same things may be done changing  $+$ to $-$ or $\leq$ to $\geq$.
 But, in the above situation, if $C_y\in\SHC^-$, we have to assume $x\stackrel{o}{<} y$ to define $pr_x(C_y)\in\SHC^+$: otherwise $]x,y')$ might be outside $x+\sht$.
  
  \medskip
\parni {\bf 2) Projection of a chamber $C_y$ on a generic segment germ}

  Let $x\in \SHI$, $\qd =[x,x')$ a generic segment-germ and  $C_y\in\SHC$ with $x\leq y$.
 By 1) we can consider $pr_x(C_y)\in\SHC^+$ (with the hypothesis $x\stackrel{o}{<} y$  if $C_y\in \SHC^-$).  We consider now an apartment $A$ containing $[x,x')$ and $pr_x(C_y)$ (by a) above).

 We consider  inside $A$  the prism  denoted by $prism_\qd (C_y)$ obtained as the intersection of all half-spaces $D(\qa, k)$ (for $\qa\in \Phi$ and $ k\in \R$)  that contain $pr_x(C_y)$ and such that $\qd\subset  M(\qa, k)$. 
We can see that if  $\qd =[x,x')$ is regular, $prism_\qd (C_y)=A$.
If the apartment $A$ contains $\qd$ and $C_{y}$ (hence also $pr_x(C_y)$) we may replace $pr_x(C_y)$ by $C_{y}$ in the above definition of $prism_\qd (C_y)$.

\begin{lemm}\label{V2.1.11} In $prism_\qd (C_y)$, there is a unique local chamber of vertex $x$ that contains $\qd$ in its closure. This chamber is independant of the choice of $A$. 
\end {lemm}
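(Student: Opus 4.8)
The plan is to work entirely inside the apartment $A$ that contains the generic segment-germ $\qd=[x,x')$ and the chamber $C_y$ (equivalently $pr_x(C_y)$), and first settle the claim there; the independence of $A$ will then follow from the properties of $prism_\qd(C_y)$ and the preordered convexity of \ref{New.1.11}(b). So fix such an $A$ and identify it with $\A$ in such a way that $x$ becomes the origin and $\qd$ is contained in some half-apartments; the set $\Psi$ of roots $\qa\in\QF$ with $\qd\subset M(\qa,k)$ for the appropriate $k$ is exactly the set of roots ``vanishing on the direction of $\qd$'', i.e. those $\qa$ with $\qa(x'-x)=0$. For each such $\qa$, the chamber $pr_x(C_y)$ lies on one side, in $D(\qa,k_\qa)$, and $prism_\qd(C_y)=\bigcap_{\qa\in\Psi}D(\qa,k_\qa)$ is the intersection of those half-apartments.

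The key observation is that inside the building $\sht^+_x\SHI$ of segment-germs at $x$, the segment-germ $\qd$ (being generic, hence lying in the open Tits cone direction) determines a spherical face, and the local chambers with vertex $x$ containing $\qd$ in their closure are exactly the chambers of a residue (the ``star'' of that face) in $\sht^+_x\SHI$. Now the walls through $\qd$ in $A$, i.e. the $M(\qa,k)$ with $\qa\in\Psi$, cut this residue into regions; I claim the prism $prism_\qd(C_y)$ contains exactly one chamber of this residue. First I would check existence: since $pr_x(C_y)$ has $[x,y)$ in its closure and $y-x\in\sht^\circ$ while the direction of $\qd$ is also generic, one can choose a point $y''$ in the direction of $pr_x(C_y)$ very close to $x$ so that $]x,y'')$ avoids all walls not through $x$; then $]x,y'')$ determines a chamber $C_x$ of vertex $x$, and because every wall $M(\qa,k)$ with $\qa\in\Psi$ passes through $\qd$ and has $pr_x(C_y)$ on the side $D(\qa,k_\qa)$, the germ $[x,x')$ lies in $\overline{C_x}$ and $C_x\subset prism_\qd(C_y)$. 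For uniqueness: two distinct chambers of vertex $x$ both containing $\qd$ in their closure and both lying in $prism_\qd(C_y)$ would be separated (in the local building at $x$) by at least one wall $M(\qa,k)$ through $x$; such a wall, to separate two chambers whose common closure contains $\qd$, must itself contain $\qd$, hence $\qa\in\Psi$ — but then $prism_\qd(C_y)$ lies on one side of $M(\qa,k)$, contradiction. This is essentially a local-convexity argument in the tangent building.

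For independence of $A$: suppose $A'$ is another apartment containing $\qd$ and $pr_x(C_y)$. By \ref{1.3}.1.c (applied to the spherical sector-face germ determined by $\qd$ and the face $pr_x(C_y)$) or by \ref{New.1.11}(b), $A\cap A'$ contains $cl^\#_A(\qd\cup pr_x(C_y))$ and there is a Weyl-isomorphism $\qf\colon A\to A'$ fixing that enclosure. Since each wall $M(\qa,k)$ with $\qa\in\Psi$ supporting a facet of $prism_\qd(C_y)$ is determined by the combinatorial position of $\qd$ and $pr_x(C_y)$, $\qf$ carries $prism_\qd(C_y)$ (computed in $A$) to $prism_\qd(C_y)$ (computed in $A'$), and it fixes $x$, $\qd$ and $pr_x(C_y)$; hence it carries the unique local chamber found in $A$ to the unique one in $A'$, and both actually lie in $A\cap A'$ (being inside the common enclosure), so they coincide.

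The main obstacle I anticipate is the uniqueness step: making rigorous the statement that two distinct local chambers at $x$ sharing a common facet-neighborhood along $\qd$ must be separated by a wall through $\qd$, and hence through $x$. This requires being careful that $\qd$ is generic so that the relevant face in $\sht^+_x\SHI$ is spherical, that the relevant residue is finite, and that ``separated by a wall'' in the local building at $x$ corresponds precisely to a hyperplane $M(\qa,k)$ with $\qa(x'-x)=0$. Once this combinatorial fact about the finite Coxeter residue is pinned down, the rest is routine bookkeeping with enclosures and Weyl-isomorphisms as in \cite{He17a}.
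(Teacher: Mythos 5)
Your uniqueness argument is sound and is essentially the paper's: two distinct local chambers at $x$ both containing $\qd$ in their closure would be strictly separated by a hyperplane $M(\qa,k)$ with $\qa$ vanishing on the direction of $\qd$, whereas $prism_\qd(C_y)$ lies weakly on one side of every such hyperplane. The existence step, however, has a genuine gap. The chamber $C_x$ you construct from a point $y''$ ``in the direction of $pr_x(C_y)$'' is nothing but $pr_x(C_y)$ itself, and it is false in general that $\qd\subset\overline{pr_x(C_y)}$. The fact you invoke --- that every wall containing $\qd$ has $pr_x(C_y)$ on the prescribed side $D(\qa,k_\qa)$ --- only constrains the roots vanishing on the direction of $\qd$; for $\qd$ to lie in $\overline{C_x}$ you also need every root $\qb$ \emph{not} vanishing on that direction to take on $C_x$ the same sign as on $x'-x$, and nothing forces this for $C_x=pr_x(C_y)$. (Already in rank $2$: if $\qa_1(x'-x)=0<\qa_2(x'-x)$ while $pr_x(C_y)$ is a chamber on which $\qa_1>0>\qa_2$, then $pr_x(C_y)$ lies in the prism but does not contain $\qd$ in its closure.) The missing ingredient is the gate property of the residue of the face of $\qd$ in $\sht^+_x\SHI$: one must take the \emph{last} chamber $C_x^{++}$ of a minimal gallery from $pr_x(C_y)$ to that face, i.e.\ the chamber of the residue nearest to $pr_x(C_y)$. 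This is exactly what the paper does, and the associated adapted root basis $(\qa_1,\dots,\qa_l)$ with $\qa_i(\qd)=0\iff i\leq r$ is then what gives both the explicit description of the prism and the independence of $A$.

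A second omission: you treat $\qd$ throughout as determining a face of the positive tangent building, but the lemma is also needed for $\qd\in\sht^-_x\SHI$ (see the N.B.\ following the statement and the uses in \ref{sc6b}), in which case the sought chamber is the \emph{negative} local chamber $op_A(w_r(C_x^{++}))$, with $w_r$ the longest element of the finite group generated by the $r_{\qa_i}$, $i\leq r$. There the independence of $A$ is not immediate from a characterization inside $\sht^+_x\SHI$: the paper checks that this negative chamber meets $conv_\A(\qd\cup pr_x(C_y))$ and only then concludes by preordered convexity (Proposition \ref{New.1.11}). Your final bookkeeping with enclosures and Weyl-isomorphisms goes in the right direction for the positive case, but without the projection $C_x^{++}$ and without the negative case the proof does not go through as written.
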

\NB This local chamber is, by definition, the {\it projection} $pr_\qd (C_y)$ of the chamber $C_y$ on the segment-germ $\qd$. 
It is the local chamber containing $\qd$ in its closure which is the nearest from $pr_x(C_y)$: either $d^W(pr_x(C_y),pr_\qd (C_y))$ is minimum or $d^ {*W}(pr_x(C_y),pr_\qd (C_y))$ is maximum.

\par The same things may be done when one supposes $y\leq x$ and $C_{y}\in \SHC^ {-}$ or $y\stackrel{o}{<} x$ and $C_{y}\in \SHC^ {+}$.

\begin{proof}
In the apartment $A$, we consider $\qd_+$ the segment-germ  $\qd$ if $\qd$  is in  $ \sht^+_x\SHI$ and $op_A(\qd)$  if  $\qd\in \sht^-_x\SHI$ (where $op_A(\qd)$ denote the opposite segment-germ in $A$). By \ref{1.3}.2, we can consider in the building $ \sht^+_x\SHI$ the minimal galleries from $pr_x(C_y)$ to $\qd_{+}$ (more exactly  to a chamber $C$ such that $\qd_{+}\in \bar C$). The last chamber of each of these galleries is the same, we denote it $C_x^{++}$.
 This chamber is associated to a positive system of roots $\Phi^+$ and a root basis $(\qa_1,\ldots, \qa_l)$, satisfying $\qa_i(\qd)=0\iff  i\leq r  $, where $0\leq r<\ell$ (we identify $x$ and $0$). 
Then, we have the characterization of the prism~: $p\in prism_\qd (C_y) \iff (\qa_i(p)\geq 0 \mathrm{\ for\ } 1\leq i\leq r ) $.  
 We consider $w_r$ the  element of highest length  in the finite Weyl group $\langle (r_{\qa_i} )_{i\leq r}\rangle$.  
 
 The local chamber $C_x^{++}$ if $\qd\in \sht^+_x\SHI$ (\resp $op_A (w_r(C_x^{++})$ if not)  is the unique chamber with vertex $x$ of $prism_\qd (C_y)$ that contains $\qd$ in its closure. Indeed, if $C$ is such a chamber, then if $
]x,p)\subset C$, we have $\qa_i(p) >0 $ for all $i\leq r$  (because  $C\subset prism_\qd (C_y)$) and $\qa_i(p) $ of the same sign as $\qa_i(\qd) $ if $i>r$ (because $\qd\subset  \bar C$) . So $C=C_x^{++}$ if $\qd\in \sht^+_x\SHI$ (\resp $C=op_A(w_r(C_x^{++}))$ if $\qd\in \sht^-_x\SHI$). 
 
 In the case $\qd\in \sht^+_x\SHI $, the characterization of $C_x^{++}$ in  the building $ \sht^+_x\SHI$ proves that it does not depend of the choice of $A$.

The chamber $op_A(w_r(C_x^{++}))$ also only depends on  $\qd$ and $C_y$ if $\qd\in \sht^-_x\SHI$. It is sufficient to prove that it intersects $conv_\A(\qd \cup pr_x(C_y))$.
Indeed, let us choose $\qx$ and $y$ such that $[x,\qx)= \qd$  and $]x,y)\subset pr_x(C_y)$.  
We have  $\qa_i(\qx) =0 $ for $i\leq  r$, $\qa_i(\qx) <0 $ for $i> r$ and  $\qa_i(y) >0 $ for $i\leq r$.
 So for $t$ near $1$ enough, $\qa_i(t\qx+(1-t)y) >0 $ for $i\leq r$ and $<0$ for $i>r$, so   $]x, t\qx+(1-t)y)\subset  op_A(w_r(C_x^{++})$.
By Proposition \ref{1.11}, the local chamber $op_A(w_r(C_x^{++}))$ is included in all apartments containing $\qd$ and $pr_x(C_y)$, so is independent of the choice of $A$.
 \end{proof}

\subsection{Centrifugally folded galleries of chambers}
\label{sc1}

Let $z$ be a point in the standard apartment $\mathbb A$. We have twinned buildings $\mathcal T_z^+\SHI$ (resp. $\mathcal T_z^-\SHI$).
   As in \ref{1.3}.2, we consider their unrestricted structure, so the associated Weyl group is $W^v$ and the chambers (resp. closed chambers) are the local chambers $C=germ_z(z+C^v)$ (resp. local closed chambers $\overline{C}=germ_z(z+\overline{C^v})$), where $C^v$ is a vectorial chamber, \cf \cite[4.5]{GR08} or \cite[{\S{}} 5]{R11}.
The distances (resp. codistances) between these chambers are written $d^W$ (resp. $d^{*W}$).
  To $\A$ is associated a twin system of apartments $\A_z = (\A_z^-,\A_z^+)$.
  
\par Let $\mathbf i = (i_1,..., i_r)$ be the type of a minimal gallery.    We choose in $\A^-_z$ a negative (local) chamber $C^-_z$ and denote by $C^+_z$ its opposite in $\A^+_z$.
We consider now galleries of (local) chambers $\mathbf c = (C_z^-,C_1,...,C_r)$ in the
apartment $\mathbb A_z^-$ starting at $C_z^-$ and of type $\mathbf i$.
Their set is written $\Gamma (C^-_{z},\mathbf i)$.
We consider the root  $\beta_j$ corresponding to the common
limit hyperplane $M_j = M({\beta_j},-\qb_j(z))$ of type $i_j$ of $C_{j-1}$ and $C_j $  satisfying moreover $\qb_j(C_j)\geq{}\qb_j(z)$.

  \par  
  We consider the system of positive roots $\QF^+$ associated to $C^+_z$. Actually, $\QF^+=w.\QF^+_f$, if $\QF^+_f$ is the system $\QF^+$ defined in \ref{1.1} and $C^+_z=germ_z(z+w.C^v_f)$.   
   We denote by $(\qa_i)_{i\in I}$ the corresponding basis of $\QF$ and by $(r_i)_{i\in I}$ the corresponding generators of $W^v$. Note that this change of notation for $\QF^+$ and $r_{i}$ is limited to subsection \ref{sc1}.

The set $\Gamma (C^-_{z},\mathbf i)$ of  galleries is in bijection with the set $\Gamma (\mathbf i) = \{1,r_{i_1}\}\times\cdots\times \{1,r_{i_r}\}$ via the map $(c_1,...,c_r)\mapsto (C_z^-, c_1 C_z^-,...,c_1\cdots c_r C_z^-)$.
Moreover $\beta_j = -c_1\cdots c_j (\alpha_{i_j})$. 

\begin{defi*}
Let $\mathfrak Q$ be a chamber in $\mathbb A_z$.
A gallery $\mathbf c = (C_z^-,C_1,...,C_r)\in\Gamma (C^-_{z},\mathbf i)$ is said to be {\it centrifugally folded} with respect to $\mathfrak Q$ if $C_j = C_{j-1}$ implies that  $M_j $ is a wall and separates $\mathfrak Q$ from $C_j = C_{j-1}$. 
We denote this set of centrifugally folded galleries by $\Gamma^+_{\mathfrak Q} (C^-_{z},\mathbf i)$.
We write $\Gamma^+_{\mathfrak Q} (C^-_{z},\mathbf i,C)$ the subset of galleries in $\Gamma_{\mathfrak Q} (C^-_{z},\mathbf i)$ such that $C_{r}$ is a given chamber $C$.
\end{defi*}

\subsection{Liftings of galleries}
\label{sc2}

Next, let $\qr_{\mathfrak Q} : \sht_{z}\SHI \to \mathbb A_z$ be the retraction centered at $\mathfrak Q$. To a gallery of chambers $\mathbf c = (C_z^-,C_1,...,C_r)$ in $\Gamma(C_z^-,\mathbf i)$,
one can associate the set of all galleries of type $\mathbf i$ starting at $C_z^-$ in $\sht_{z}^-\SHI$ that retract onto $\mathbf c$, we denote this set by $\mathcal C_{\mathfrak Q}(C_z^-,\mathbf c)$.
We denote the set of galleries $\mathbf c' = (C_z^-,C'_1,...,C'_r)$ in $\mathcal C_{\mathfrak Q}(C_z^-,\mathbf c)$ that are minimal (i.e. satisfy $C'_{j-1}\ne C'_j$ for any $j$)  by $\mathcal C_{\mathfrak Q}^m(C_z^-,\mathbf c)$. 
Recall from  \cite[Proposition 4.4]{GR13}, that the set $\mathcal C_{\mathfrak Q}^m(C_z^-,\mathbf c)$ is nonempty if, and only if, the gallery $\mathbf c$ is centrifugally folded with respect to $\mathfrak Q$. Recall also from loc. cit., Corollary 4.5, that if $\mathbf c \in \Gamma_{\mathfrak Q}^+(C_z^-,\mathbf i)$, then the number of elements in $\mathcal C^m_{\mathfrak Q}(C_z^-,\mathbf c)$ is:

$$
\sharp \mathcal C^m_{\mathfrak Q}(C_z^-,\mathbf c) = \prod_{j\in J_1} (q_{j} - 1) \times \prod_{j\in J_2} q_{j} 
$$
 where $q_j=q_{M_j}\in\shq$, 
 $$J_1=\{j\in\{1,\cdots,r\}\mid c_j=1\}=\{j\in\{1,\cdots,r\}\mid C_{j-1}=C_{j}\}$$ 
 and $$J_2=\{j\in\{1,\cdots,r\}\mid C_{j-1}\neq C_{j} \mathrm{\ and\ } M_j \mathrm{\ is\ a\ wall\ separating\ } {\mathfrak Q} \mathrm{\ from\ } C_j\}.$$
 
\par One may remark that $\{1,\cdots,r\}$ contains the disjoint union $J_{1}\sqcup J_{2}$, but may be different from it.
The missing $j$ are precisely those $j$ such that $M_{j}$ is not a wall (hence $q_{M_j}$ is not defined).
One has  $\{1,\cdots,r\}=J_{1}\sqcup J_{2}$, when $z$ is a special point, in particular when $z\in Y$.

\bigskip
\par More generally let $\mathbf c^m = (C_z^-,C_1^m,...,C_r^m)$ be the minimal gallery in $\A^-_{z}$ of type $\mathbf i$.
We write $\mathcal C^m(C^ {-}_{z},\mathbf i)$ the set of all minimal galleries in $\SHI$ of type $\mathbf i$ starting from $C_z^-$. 
Its cardinality is $\prod_{j\in J_2} q_{j}$, where $J_2$ is the set of $1\leq j\leq r$ such that the hyperplane $M_{j}$ separating $C^m_{j-1}$ from $C^m_{j}$ is a wall.

\begin{NB} The $q_{j}=q_{M_{j}}$ in the above formulas are in the set $\shq$ of parameters.
More precisely, by \ref{1.3}.6, if $M_{j}=M(\qb_{j},k_{j})$ with $\qb_{j}=w.\qa_{i}$ (for some $w\in W^v$, $i\in I$ and $k_{j}\in\Z$), then one has $q_{j}=q_{i}$ if $k_{j}$ is even and $q_{j}=q'_{i}$ if $k_{j}$ is odd.
\end{NB}

\subsection{Hecke paths}
\label{sc3}

The Hecke paths we consider here are slight modifications of those used in \cite{GR13}. 
They were defined in \cite{BPGR16}, or in \cite{BCGR13} (for the classical case).

Let us fix a local  chamber $C_x\in \mathscr C_0\cap \mathbb A$.

\begin{defi*} A Hecke path of shape $\ql\in Y^{++}$ with respect to $C_x$ in $\A$ is a $\ql-$path in $\mathbb A$  that satisfies the following assumptions.
 For all $p=\pi(t)$,  we ask $x \stackrel{o}{<} p$, so we can consider the  local negative chamber $C^-_p=pr_{p}(C_x)$ by \ref{sc0}.1. 
  Then we assume moreover 
 that for all $t\in [0,1]\setminus\{0,1\}$, there exist finite sequences
$(\xi_0=\pi_-'(t),\xi_1,\dots,\xi_s=\pi_+'(t))$ of vectors in $V$ and
$(\beta_1,\dots,\beta_s)$  of  real roots such that, for all $j=1,\dots,s$:
\begin{itemize}
\item [(i)] $r_{\beta_j}(\xi_{j-1})=\xi_j$,
\item[(ii)] $\beta_j(\xi_{j-1})<0$,
\item[(iii)]   $\beta_j(\pi(t))\in\mathbb Z$, \ie $\qp(t)$ is in a wall of direction $\ker\qb_{j}$,
\item[(iv)]   $\qb_j(C^-_{\pi(t)}) <\qb_j(\pi(t))$.
\end{itemize}

\par One says then that these two sequences are a $(W^v_{\pi(t)}, C^-_{\pi(t}))-$chain from $\pi'_-(t)$ to $\pi'_+(t)$. 
Actually $W^v_{\pi(t)}$ is the subgroup of $W^v$ generated by the $r_{\qb}$ such that $M(\qb,-\qb(\qp(t)))$ is a wall.
\end{defi*}

When $t\in ]0,1[$ is such that $s\not=0$, one has $\pi'_{-}(t)\not=\pi'_{+}(t)$,  the path is  centrifugally folded with respect to $C_x$  at $\pi(t)$. 

\begin{lemm}\label{sc3a} Let $\qp\subset\A$ be a Hecke path with respect to $C_{x}$ as above. Then,

\par (a) For $t$ varying in $[0,1]$ and $p=\qp(t)$, the set of vectorial rays $\R_{+}(x-\qp(t))$ is contained in a finite set of closures of (negative) vectorial chambers.

\par (b) There is only a finite number of pairs $(M,t)$ with a wall $M$ containing a point $p=\qp(t)$ for $t>0$, such that $\qp_{-}(t)$ is not in $M$ and $x$ is not in the same side of $M$ as $\qp_{-}(t)$ (but may be $x\in M$).

\par (c) One writes $p_{0}=\qp(t_{0}), p_{1}=\qp(t_{1}), \ldots, p_{\ell_\qp{}}=\qp(t_{\ell_{\qp}})$ with $0=t_{0}<t_{1}< \cdots <t_{\ell_{\qp}-1} <1=t_{\ell_{\qp}}$ the points $p=\qp(t)$ satisfying to (b) above (or $t=0,t=1$).
Then any point $t$ where the path is (centrifugally) folded with respect to $C_{x}$ at $\qp(t)$ appears in the set $\{t_{k} \mid 1\leq k\leq \ell_{\qp}-1\}$.
\end{lemm}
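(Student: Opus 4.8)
The three assertions are all finiteness/structure statements about the geometry of a Hecke path $\pi$ relative to the fixed chamber $C_x$, and my plan is to prove them in the order (a), (b), (c), using (a) to get (b) and (b) to get (c). The key point throughout is that $\pi$ is a $\ql$-path, so its image is a piecewise linear curve with only finitely many break-points and with all tangent directions lying in the finite set $W^v.\ql$; combined with the fact that $x \stackrel{o}{<} \pi(t)$ for all $t$, the segment germ $[\pi(t),x)$ always lies in the open Tits cone, hence in a genuine (spherical, negative) vectorial chamber or one of its faces.

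\textbf{Step (a).} I would parametrize the finitely many linear pieces of $\pi$: on each interval $[t_{i-1},t_i]$, $\pi(t) = \pi(t_{i-1}) + (t-t_{i-1})\eta_i$ for a fixed $\eta_i \in W^v.\ql$. For $t$ in such a piece, $x - \pi(t)$ moves along an affine line segment in $V$ (as $t$ varies), and I want to show the ray $\R_+(x-\pi(t))$ visits only finitely many closed vectorial chambers. The obstruction to infinitely many is that the arrangement of walls through the origin $\bigcup_{\qa\in\QF}\ker(\qa)$, restricted to the open Tits cone $\sht^\circ$, is \emph{locally finite} (each spherical face meets only finitely many walls through $0$); since $x-\pi(t)$ stays in $-\sht^\circ \cup \{$boundary$\}$ — actually in $-\sht$, and we only care about the part in $-\sht^\circ$ — and traverses a compact polygonal path, it crosses only finitely many of the hyperplanes $\ker(\qa)$. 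This is the one place I should be slightly careful: I must use that $x\stackrel{o}{<}\pi(t)$ to stay in the open cone where the wall arrangement is locally finite, exactly as in the analogous statement in \cite{GR13}. So over each of the finitely many linear pieces of $\pi$, finitely many chambers are visited; summing, (a) follows.

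\textbf{Step (b).} Here I would argue that a wall $M=M(\qb,k)$ with the stated property forces the root direction $\ker\qb$ to be one of the finitely many hyperplanes appearing in (a). Indeed, if $x$ and $\pi_-(t)$ are strictly on opposite sides of $M$ (or $x\in M$), then the open segment $]\,x,\pi(t)\,]$ (at least near $\pi(t)$) crosses $M$, so $\qb$ changes sign — or vanishes at $\pi(t)$ — along the ray from $\pi(t)$ towards $x$; thus $\qb$ (up to the reflection bookkeeping) is among the finitely many roots whose kernels separate some $x-\pi(s)$ from another, which by (a) is a finite set of \emph{directions}. For each such fixed root direction, as $t$ ranges over a linear piece of $\pi$, the value $\qb(\pi(t))$ is affine in $t$, hence hits any given integer $k$ for only finitely many $t$ (unless $\qb\circ\eta_i = 0$ on that piece, i.e. $\pi$ moves parallel to the wall — but then $x$ cannot switch sides along that piece, and one handles the endpoints separately). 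Moreover the relevant $k$ are constrained to lie between $\qb(x)$-ish and $\qb(\pi(t))$-ish bounds, a bounded range since $\pi$ has compact image; so only finitely many integers $k$ occur. Combining: finitely many root directions $\times$ finitely many $k$ $\times$ finitely many $t$ per piece $\times$ finitely many pieces $=$ finitely many pairs $(M,t)$.

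\textbf{Step (c).} This is essentially a definitional reduction. Suppose $t\in\,]0,1[$ is a point where $\pi$ is centrifugally folded with respect to $C_x$, i.e. in the Hecke-path definition $s\neq 0$ for this $t$, so $\pi'_-(t)\neq\pi'_+(t)$ and there is a $(W^v_{\pi(t)},C^-_{\pi(t)})$-chain with at least one root $\qb=\qb_1$ satisfying (i)--(iv). Condition (iii) says $\qb(\pi(t))\in\Z$, so $M:=M(\qb,-\qb(\pi(t)))$ is a wall through $\pi(t)$. Condition (iv), $\qb(C^-_{\pi(t)}) < \qb(\pi(t))$ together with $C^-_{\pi(t)} = pr_{\pi(t)}(C_x)$, says precisely that $x$ lies on the side $\qb < \qb(\pi(t))$ of $M$ (or on $M$): indeed $pr_{\pi(t)}(C_x)$ points from $\pi(t)$ towards $x$, and by \ref{sc0}.1 its closure contains $[\pi(t),x)$, so the sign of $\qb$ on it is the sign of $\qb$ along the germ towards $x$. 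Finally condition (ii), $\qb(\pi'_-(t))<0$ (wait — it is $\qb_j(\xi_{j-1})<0$ with $\xi_0 = \pi'_-(t)$), means $\pi$ is arriving at $\pi(t)$ from the side $\qb > \qb(\pi(t))$, hence $\pi_-(t)\notin M$ and $\pi_-(t)$ is strictly on the opposite side of $M$ from $x$. So $(M,t)$ satisfies the hypotheses of (b), and therefore $t\in\{t_1,\dots,t_{\ell_\pi-1}\}$ by the very definition of that set (together with $0<t<1$). The endpoints $t=0,1$ are in the list by fiat.

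\textbf{Main obstacle.} The substantive geometric content is all in Step (a): establishing local finiteness of the crossings of the affine-root arrangement by the polygonal curve $x-\pi(\cdot)$, which rests on staying inside the open Tits cone (hence the role of $x\stackrel{o}{<}\pi(t)$) and on the local finiteness of $\{\ker\qa\}_{\qa\in\QF}$ on spherical faces. Steps (b) and (c) are then bookkeeping — (b) an affine-function-hits-an-integer-finitely-often argument on each of finitely many linear segments, (c) a direct unwinding of the defining inequalities (i)--(iv) of a Hecke path into the separation condition of (b). I would be careful in (b) about linear pieces of $\pi$ that run parallel to a candidate wall, and about whether $x$ can lie \emph{on} a wall (the statement explicitly allows $x\in M$), handling those as degenerate sub-cases that only add finitely many more pairs.
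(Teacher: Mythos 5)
Your steps (a) and (c) are correct and essentially reproduce the paper's argument. For (a) the paper observes that on each linear piece $[p'_{i},p'_{i+1}]$ of the $\ql$-path the ray $\R_{+}(x-\qp(t))$ stays in the convex hull of $\R_{+}(x-p'_{i})$ and $-\R_{+}(p'_{i+1}-p'_{i})$, which meets only finitely many closed negative chambers because $x-p'_{i}\in-\sht^\circ$; your version via compactness of $\{x-\qp(t)\}\subset-\sht^\circ$ and local finiteness of the arrangement on the open Tits cone is an equivalent route. For (c) the paper likewise just reads off conditions (ii) and (iv) for $j=1$, exactly as you do.

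Step (b), however, has a genuine gap: the finite set of root directions you exhibit is the wrong one. Unwinding the hypothesis on a pair $(M,t)$ with $M=M(\qb,k)$: since $\qb(\qp(t))+k=0$, the germ $\qp_{-}(t)$ lies strictly on the side of $M$ where $\qb+k$ has the sign of $-\qb(\qp'_{-}(t))$, so the condition ``$\qp_{-}(t)\not\subset M$ and $x$ not on that side'' reads $\qb(\qp'_{-}(t))\neq 0$ and $\qb(x-\qp(t))\cdot\qb(\qp'_{-}(t))\geq 0$. Now take $\qb$ with $\qb(\qp'_{-}(t))>0$ and $\qb(x-\qp(s))>0$ for \emph{every} $s$ (e.g. $\qb\circ\qp$ increasing along a linear piece but staying below $\qb(x)$, and hitting an integer at $t$): the pair $(M,t)$ then satisfies the hypothesis of (b), yet $\ker\qb$ neither meets nor separates any two of the points $x-\qp(s)$, so such roots escape the finite set ``roots whose kernels separate some $x-\qp(s)$ from another'' entirely. (The supporting picture is also off: since $\qp(t)\in M$, the segment $]x,\qp(t)]$ does not cross $M$ — the affine function $\qb+k$ is monotone on it and vanishes only at the endpoint.) The correct finite set, which is what the paper uses, consists of the roots whose kernel (weakly) separates the direction $x-\qp(t)$ — hence one of the finitely many closed negative chambers furnished by (a) — from the \emph{backward tangent} direction $p'_{i}-p'_{i+1}$; this set is finite because the first direction lies in $-\sht^\circ$, the second in $-\sht$, and only finitely many root hyperplanes strictly separate such a pair. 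Once that set is in hand, your remaining bookkeeping (boundedness of the admissible $k$, each wall meeting a linear piece at most once unless it contains it, in which case $\qp_{-}(t)\subset M$ and the pair is excluded) is fine and agrees with the paper.
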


\begin{proof} a) The $\ql-$path $\qp$ is a union of line segments $[p'_{0},p'_{1}]\cup [p'_{1},p'_{2}]\cup \cdots \cup [p'_{n-1},p'_{n}]$.
By hypothesis on Hecke paths, for each point $p=\qp(t)$, $x-p$ is in the open negative Tits cone $-\sht^\circ$ (in particular only in a finite number of closures of negative vectorial chambers).
Let $p\in [p'_{i},p'_{i+1}]$, then $x-p=x-p'_{i}-(p-p'_{i})$ and $\R_{+}(x-p)\in conv(\R_{+}(x-p'_{i}),-\R_{+}(p-p'_{i}))$ and this convex hull is independent of $p$ and only in a finite number of closures of (negative) vectorial chambers (as $(x-p'_{i})\in -\sht^\circ$ and $(p-p'_{i})\in \R_{+}(p'_{i+1}-p'_{i})\subset\sht$). So (a) is proved.

\par b) There is only a finite number of vectorial walls separating (strictly) a chamber in the set of (a) above and a vector $p'_{i}-p'_{i+1}$.
And, for each such vectorial wall, there is only a finite number of walls with this direction meeting the compact set $\qp([0,1])$.
Moreover such a wall meets a segment $]p'_{i},p'_{i+1}]$ at most once or contains $[p'_{i},p'_{i+1}]$ (hence $\qp_{-}(t)\subset M$ for $\qp(t)\in]p'_{i},p'_{i+1}]$).

\par c) The folding points are among $\{p_{1},\ldots,p_{\ell_{\qp}-1}\}$ by (iv) and (ii) above for $j=1$.
\end{proof}

\subsection{Retractions and liftings of line segments}
\label{sc3b}

\par{\bf1) Local study.}

In tangent buildings, the centrifugally folded galleries are related with retractions of opposite segment germs, by the following lemma  proved in \cite[ Lemma 4.6]{GR13}.

\par We consider a point $z\in \A$ and a negative local chamber $C_{z}^-$ in $\A_{z}^-$.
Let $\xi$ and $\eta$ be two segment germs in $\A_z^+ =\A\cap \sht^+_z\SHI$ . Let $-\eta$ and $-\xi$ opposite respectively $\eta$ and $\xi$ in $\A_z^-$.
 Let $\mathbf i$ be the type of a minimal gallery between $C_z^-$ and $C_{-\xi}$, where $C_{-\xi}$ is the negative (local) chamber containing $-\xi$ such that $d^W(C_z^-, C_{-\xi})$ is of minimal length.
  Let $\mathfrak Q$ be a chamber of $\A_z^+$ containing $\eta$. We suppose $\qx$ and $\eta$ conjugated by $W^v_z$.

\begin{lemm*} The following conditions are equivalent:

\par (i) There exists an opposite $\zeta$ to $\eta$ in $\sht_{z}^-\SHI$ such that $\rho_{\A_z, C^-_z} ( \zeta) = -\xi$.

\par (ii)  There exists a gallery $\mathbf c \in \Gamma_{\mathfrak Q}^+(C_{z}^-,\mathbf i)$ ending in $-\eta$.

\par (iii) There exists a $(W^v_z, C_z^-)-$chain from $\xi$ to $\eta$.

\par  Moreover the possible $\qz$ are in one-to-one correspondence with the disjoint union of the sets  $\mathcal C^m_{\mathfrak Q}(C_{z}^-,\mathbf c)$ for $\mathbf c$ in the set $\Gamma_{\mathfrak Q}^+(C_{z}^-,\mathbf i,-\eta)$ of galleries in $\Gamma_{\mathfrak Q}^+(C_{z}^-,\mathbf i)$ ending in $-\eta$.
\end{lemm*}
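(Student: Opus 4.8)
The statement is a "local" lemma about the twinned tangent buildings $\sht_z^\pm\SHI$ at a point $z\in\A$, and the strategy is to reduce everything to the already-cited result \cite[Lemma 4.6, Cor. 4.5]{GR13} together with the combinatorial translation of $(W^v_z,C_z^-)$-chains into centrifugally folded galleries. The chain of equivalences (i)$\Leftrightarrow$(ii)$\Leftrightarrow$(iii) is essentially the content of [\LC], so the work here is (a) to check that the hypotheses we impose ($\xi,\eta$ conjugate under $W^v_z$, $\mathfrak Q\supset\eta$, $\mathbf i$ of minimal length between $C_z^-$ and $C_{-\xi}$) are exactly those under which that lemma applies, and (b) to upgrade the bijection statement at the end, organizing the set of lifts $\zeta$ by the gallery in $\A_z^-$ onto which the chosen minimal lifting retracts.

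First I would set up the dictionary. A segment germ $\zeta\in\sht_z^-\SHI$ opposite to $\eta$ determines, via the apartment $A$ spanned by $\zeta$ and a fixed apartment containing $C_z^-$, a gallery: take a minimal gallery in $\sht_z^-\SHI$ from $C_z^-$ to the chamber $C_{-\eta}$ of $A$ containing $\zeta$ (chosen of minimal $W$-distance); its type is forced to be $\mathbf i$ because $\xi$ and $\eta$ — hence $-\xi$ and $-\eta$ — are $W^v_z$-conjugate, so the minimal-length chamber through $\zeta$ sits at $W$-distance $\mathbf i$ from $C_z^-$ exactly as $C_{-\xi}$ does. Applying the retraction $\qr=\rho_{\A_z,C_z^-}$ to this gallery of chambers in $\sht_z^-\SHI$ yields a gallery $\mathbf c=\qr(\mathbf c')$ of type $\mathbf i$ in $\A_z^-$ starting at $C_z^-$. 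By \cite[Prop. 4.4, Cor. 4.5]{GR13}, $\mathbf c$ is centrifugally folded with respect to $\mathfrak Q$, it ends at $-\eta$ (since $\qr$ fixes $\A$ pointwise, in particular fixes $-\eta$), and $\mathbf c'$ is one of the minimal liftings counted by $\mathcal C^m_{\mathfrak Q}(C_z^-,\mathbf c)$. This gives the map in one direction: $\zeta\mapsto$ (the pair consisting of $\mathbf c\in\Gamma^+_{\mathfrak Q}(C_z^-,\mathbf i,-\eta)$ and $\mathbf c'\in\mathcal C^m_{\mathfrak Q}(C_z^-,\mathbf c)$). Conversely, given such a pair, the last chamber $C'_r$ of $\mathbf c'$ lies in some apartment, it contains a unique segment germ $\zeta$ opposite to $\eta$ (by the opposition in that apartment), and $\qr(\zeta)=-\xi$ because $\qr$ maps $C'_r$ to the last chamber of $\mathbf c$, namely $-\eta$'s chamber... more precisely to the chamber $C_{-\xi}$ (the minimal-length chamber through $-\xi$), and opposition is preserved. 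Checking that these two assignments are mutually inverse is routine once the minimality conventions are pinned down.

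For the equivalences: (i)$\Rightarrow$(ii) is the construction just described; (ii)$\Rightarrow$(i) is its inverse; (ii)$\Leftrightarrow$(iii) is \cite[Lemma 4.6]{GR13} verbatim — a centrifugally folded gallery of type $\mathbf i$ from $C_z^-$ ending in $-\eta$ exists iff there is a $(W^v_z,C_z^-)$-chain from $\xi$ to $\eta$, which is the combinatorial unfolding of a minimal gallery in $\A_z^+$ read against the wall structure through $z$. The "moreover" clause is then just the observation that the fibers of $\zeta\mapsto\mathbf c$ are precisely the sets $\mathcal C^m_{\mathfrak Q}(C_z^-,\mathbf c)$, so the full set of valid $\zeta$ is in bijection with $\coprod_{\mathbf c\in\Gamma^+_{\mathfrak Q}(C_z^-,\mathbf i,-\eta)}\mathcal C^m_{\mathfrak Q}(C_z^-,\mathbf c)$.

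**Main obstacle.** The delicate point is not the equivalences — those are imported — but verifying that the bijection is well-defined independently of the auxiliary choices: the apartment $A$ containing $\zeta$, the fixed apartment containing $C_z^-$, and the particular minimal gallery from $C_z^-$ to $C_{-\eta}$ chosen inside $\sht_z^-\SHI$. One has to argue that two minimal galleries through the same $\zeta$ retract to the same $\mathbf c$ — this uses that $\mathbf i$ is the type of minimal length, so the minimal gallery to the minimal-length chamber through $-\xi$ is unique given its type (by \ref{1.3}.2, a reduced word determines a unique minimal gallery), combined with the fact that $\qr$ is a morphism of buildings fixing $\A$. The same care is needed to see that $\zeta$ depends only on the pair $(\mathbf c,\mathbf c')$ and not on the apartment realizing $C'_r$; here one invokes the preordered-convexity/intersection-of-apartments properties (\ref{1.3}.1.a, Proposition \ref{New.1.11}) to identify $\zeta$ canonically inside any apartment containing $C'_r$ and $\eta$. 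Once these independence checks are dispatched, the lemma follows.
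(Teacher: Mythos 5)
First, a point of reference: the paper does not reprove this lemma at all --- it is imported verbatim from \cite[Lemma 4.6]{GR13} (together with Prop.~4.4 and Cor.~4.5 of that paper for the counting). So your overall plan of ``reduce everything to GR13'' is, at the level of strategy, exactly what the paper does. The difficulty is that your reconstruction of the mechanism behind that lemma contains a genuine error, and it sits at the heart of the statement. You produce the centrifugally folded gallery as $\mathbf c=\qr(\mathbf c')$ with $\qr=\qr_{\A_z,C_z^-}$. But $\qr_{\A_z,C_z^-}$ is centered at $C_z^-$ and preserves $W$-distances from $C_z^-$; it therefore sends any minimal gallery issued from $C_z^-$ to the \emph{unfolded} minimal gallery of the same type in $\A_z^-$, which ends at $C_{-\xi}$. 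It can never produce a folded gallery, and its image need not end at a chamber containing $-\eta$ (your justification ``since $\qr$ fixes $\A$ pointwise, in particular fixes $-\eta$'' is beside the point, since $\zeta\notin\A$ in general). The entire content of the lemma is the interplay of \emph{two different} retractions of $\sht_z\SHI$: the retraction $\qr_{\A_z,C_z^-}$ centered at the negative chamber, which encodes the condition $\qr_{\A_z,C_z^-}(\zeta)=-\xi$ and forces $\mathbf c'$ to be minimal of type $\mathbf i$; and the retraction $\qr_{\mathfrak Q}$ centered at the positive chamber $\mathfrak Q\supset\eta$, which sends every opposite of $\eta$ to $-\eta$ and sends $\mathbf c'$ to the centrifugally folded gallery $\mathbf c=\qr_{\mathfrak Q}(\mathbf c')\in\Gamma^+_{\mathfrak Q}(C_z^-,\mathbf i,-\eta)$. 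This is also why the lifting sets in the ``moreover'' clause are the $\mathcal C^m_{\mathfrak Q}(C_z^-,\mathbf c)$, i.e.\ liftings with respect to $\qr_{\mathfrak Q}$, not with respect to $\qr_{\A_z,C_z^-}$. The same conflation reappears in your converse direction (``$\qr$ maps $C'_r$ to the last chamber of $\mathbf c$ \dots more precisely to the chamber $C_{-\xi}$''): these are two different images under two different retractions, and keeping them distinct is exactly what makes the argument work.

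Once the two retractions are separated, the argument does go through as in \cite{GR13}: to $\zeta$ one attaches the unique minimal gallery $\mathbf c'$ of type $\mathbf i$ from $C_z^-$ to the chamber containing $\zeta$ at minimal distance (that this chamber is at distance $\mathbf i$ follows, as you say, from $\qr_{\A_z,C_z^-}(\zeta)=-\xi$ and distance preservation); then $\qr_{\mathfrak Q}(\mathbf c')=\mathbf c$ is centrifugally folded and ends at $-\eta$ because $\qr_{\mathfrak Q}(\zeta)=-\eta$, and $\mathbf c'\in\mathcal C^m_{\mathfrak Q}(C_z^-,\mathbf c)$ by construction; conversely a minimal lift with respect to $\qr_{\mathfrak Q}$ of a gallery in $\Gamma^+_{\mathfrak Q}(C_z^-,\mathbf i,-\eta)$ ends at a chamber containing an opposite $\zeta$ of $\eta$ (pull $-\eta$ back through the isomorphism induced by $\qr_{\mathfrak Q}$ on an apartment containing $\mathfrak Q$ and that chamber), and minimality of type $\mathbf i$ forces $\qr_{\A_z,C_z^-}(\zeta)=-\xi$. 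Your closing remarks on independence of auxiliary choices are legitimate but secondary; the missing idea in your write-up is the second retraction.
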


\parni{\bf 2) Consequence.} Let $C_{x}$ be a positive local chamber in $\A$ and $z\in\A$ a point such that $x \stackrel{o}{<} z$. 
We consider $C^-_{z}=pr_{z}(C_{x})$.
Then one knows that the restriction of the retraction $\qr=\qr_{\A,C_{x}}$ to the tangent twin building $\sht_{z}\SHI$ is the retraction $\rho_{\A_z, C^-_z}$.
 
\par We consider two points $y,z_{0}$  in $\SHI$ such that $x \stackrel{o}{<} z_{0}\leq y$, with $d^v(z_{0},y)=\ql\in Y^ {++}$.
By \ref{suse:Paths}, the image $\qr([z_{0},y])$ is a $\ql-$path $\qp$ from $\qr(z_{0})$ to $\qr(y)$.
For $z\in [z_{0},y[$, we consider an apartment $A$ containing $[z,y)$ and $C_{x}$, hence also $C_{z}^-$.
We write $p=\qr(z)$.
The restriction $\qr\vert_{A}$ is the restriction to $A$ of an automorphism $\qf$ of $\SHI$ fixing $C_{x}$ (and an isomorphism from $A$ to $\A$); $\qf$ induces an isomorphism $\qf\vert_{\sht_{z}\SHI}$ from $\sht_{z}\SHI$ onto $\sht_{z}\SHI$.
One has $\qr\vert_{\sht_{z}\SHI}=\rho_{\A_p, C^-_p}\circ\qf\vert_{\sht_{z}\SHI}=\qf\vert_{A_{z}}\circ\rho_{A_{z}, C^-_{z}}$.
So one may use the above Lemma, more precisely the implication $(i)\implies(iii)$:
we get a $(W^v_p, C_p^-)-$chain from $\qp'_{-}(t)$ to $\qp'_{+}(t)$ (if $p=\qp(t)$).

\par We have proved that $\qp=\qr([z_{0},y])$ is a Hecke path of shape $\ql$ with respect to $C_{x}$ in $\A$.
This result  is a part of  \cite[Theorem 3.4]{BPGR16}. 
It is also a consequence of the proof of \cite[Th. 3.8]{BCGR13} which deals with the classical case of buildings.

\medskip
\parni{\bf 3) Liftings of Hecke paths.}

\par One considers in $\A$ a positive local chamber $C_{x}$, a Hecke path $\qp$ of shape $\ql\in Y^ {++}$ with respect to $C_{x}$ and the retraction $\qr=\qr_{\A,C_{x}}$.
Given a point $y\in\SHI$ with $\qr(y)=\qp(1)$, we consider the set $S_{C_{x}}(\qp,y)$ of all segment germs $[z,y]$ in $\SHI$ such that $\qr([z,y])=\qp$.
The above Lemma (essentially (ii)) is used in \cite{BPGR16} to compute the cardinality of $S_{C_{x}}(\qp,y)$.

\par We consider the notations of \ref{suse:Paths} and the numbers $t_{k}$ of  Lemma \ref{sc3a}.
Then $p_{k}=\qp(t_{k})$, $\qx_{k}=-\qp_{-}(t_{k})$, $\eta_{k}=\qp_{+}(t_{k})$ and 
$\mathbf i_k$ is the type of a minimal gallery between $C_{p_k}^-$ and $C_{-\xi_k}$, where $C_{-\xi_k}$ is the negative (local) chamber such that  $-\xi_k\subset \overline{C_{-\xi_k}}$ and $d^W(C_{p_k}^-, C_{-\xi_k})$ is of minimal length.
 Let $\mathfrak Q_k$ be a fixed chamber  in $\A^+_{z_k}$ containing $\eta_k$ in its closure
and $\Gamma_{\mathfrak Q_k}^+(C_{p_k}^-,\mathbf i_k, -\eta_k)$ be the set of all the galleries $(C^-_{z_k}, C_1,...,C_r )$  of type $\mathbf i_k$ in $\A^-_{z_k}$, centrifugally folded with respect to $\mathfrak Q_k$ and with $-\eta_k\in \overline{C_r}$.

 \par The following result is Theorem 3.4 in \cite{BPGR16}.
 One uses the notations of \ref{sc1} and \ref{sc2}.
One considers paths $\qp$ more general than Hecke paths.
 The idea is to  lift the path $\pi$ step by step starting from its end by using the above Lemma.
 We shall generalize it in Theorem \ref{sc7} by lifting decorated Hecke paths (see just below).

\begin{theo}\label{sc4} The set $S_{C_x}(\pi, y)$ is non empty if, and only if,  $\pi$ is a Hecke path with respect to $C_x$. Then, we have a bijection
$$
S_{C_x}(\pi, y)\simeq \Big( \prod_{k=1}^{\ell_\pi-1} \coprod_{\mathbf c\in\Gamma_{\mathfrak Q_k}^+({C^-_{p_{k}},}\mathbf i_k,-\eta_k)} \mathcal C^m_{\mathfrak Q_k} ({C^-_{p_{k}},}\mathbf c) \Big). \mathcal C^m(C^ {-}_{y},\mathbf i_{\ell_{\qp}})
$$

\par In particular, the number of elements in this set is a polynomial  in the numbers $q\in\shq$ with coefficients in $\Z$ depending only on $\A$.
\end{theo}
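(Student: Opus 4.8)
The plan is to prove Theorem \ref{sc4} by an induction on the number of folding points $\ell_\pi$, lifting the path $\pi$ step by step \emph{from its endpoint backwards}, using the local Lemma of \ref{sc3b}.1 at each folding point and the count of minimal liftings from \ref{sc2}. The non-emptiness statement is already essentially contained in \ref{sc3b}.2: if $S_{C_x}(\pi,y)\neq\emptyset$, then the image under $\qr=\qr_{\A,C_x}$ of a lift is a Hecke path with respect to $C_x$, so only Hecke paths can occur; the converse (every Hecke path lifts) will fall out of the explicit construction below, since each factor in the product is non-empty precisely because $\mathbf c^m\in\Gamma^+_{\mathfrak Q_k}$ always contributes.

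First I would set up the recursion. Write the points $p_0=\qp(0),p_1,\dots,p_{\ell_\pi}=\qp(1)$ as in Lemma \ref{sc3a}(c), and think of $\pi$ as the concatenation of the straight segments $[p_{k-1},p_k]$, which are unfolded (no folding in the open interval). Given $y$ with $\qr(y)=p_{\ell_\pi}$: on the last segment $[p_{\ell_\pi-1},p_{\ell_\pi}]$, which is a straight $\ql$-type segment of unrestricted type $\mathbf i_{\ell_\pi}$ emanating from $C_y^-=pr_y(C_x)$, the lifts $[z_{\ell_\pi-1},y]$ with $\qr([z_{\ell_\pi-1},y])=[p_{\ell_\pi-1},p_{\ell_\pi}]$ are exactly the minimal galleries of type $\mathbf i_{\ell_\pi}$ in $\SHI$ starting at $C_y^-$, whose set is $\mathcal C^m(C_y^-,\mathbf i_{\ell_\pi})$ by \ref{sc2}. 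This produces the trailing factor. Then at the folding point $p_{\ell_\pi-1}$, sitting at a point $z_{\ell_\pi-1}\in\SHI$ with $\qr(z_{\ell_\pi-1})=p_{\ell_\pi-1}$: the segment germ $\eta_{\ell_\pi-1}=\qp_+(t_{\ell_\pi-1})$ has already been lifted (it is the initial germ of the piece just constructed), and I must now lift the incoming germ $-\xi_{\ell_\pi-1}$, i.e.\ find an opposite $\zeta$ to the lifted $\eta_{\ell_\pi-1}$ in $\sht^-_{z_{\ell_\pi-1}}\SHI$ with $\qr(\zeta)=-\xi_{\ell_\pi-1}$. Here the retraction restricted to the tangent twin building at $z_{\ell_\pi-1}$ is, up to an automorphism of $\SHI$ fixing $C_x$, the retraction $\rho_{\A_p,C_p^-}$ with $p=p_{\ell_\pi-1}$ (this is the identification made in \ref{sc3b}.2). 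So the local Lemma applies verbatim: the set of such $\zeta$ is in bijection with $\coprod_{\mathbf c\in\Gamma^+_{\mathfrak Q_{\ell_\pi-1}}(C^-_{p_{\ell_\pi-1}},\mathbf i_{\ell_\pi-1},-\eta_{\ell_\pi-1})}\mathcal C^m_{\mathfrak Q_{\ell_\pi-1}}(C^-_{p_{\ell_\pi-1}},\mathbf c)$, which is non-empty because the Hecke-path condition (i)--(iv) at $t_{\ell_\pi-1}$ gives a $(W^v_{p},C_p^-)$-chain, hence by the equivalence (iii)$\Leftrightarrow$(ii) a centrifugally folded gallery ending in $-\eta_{\ell_\pi-1}$. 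Iterating this over $k=\ell_\pi-1,\ell_\pi-2,\dots,1$ glues one more segment $[p_{k-1},p_k]$ (lifted again as a minimal gallery, but this minimality/uniqueness is already encoded in the $\mathcal C^m_{\mathfrak Q_k}$ count) at each step, and produces the claimed product decomposition.

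The one technical point that needs care — and what I would regard as \textbf{the main obstacle} — is that the construction above lifts each germ, but one must check that the germs glue into an honest segment $[z,y]$ whose retraction is exactly $\pi$ (not merely a path with the same folding data), and that the correspondence is a \emph{bijection}: i.e.\ that distinct choices at the various levels give distinct global lifts and that every lift arises this way. The injectivity/surjectivity is handled by the fact that, given a lift $[z,y]$, retracting its restriction to each tangent building at the images of the $t_k$ recovers exactly the local data, so the reconstruction is deterministic once the choices are fixed — this is the place where one uses that between consecutive folding points the lift is forced (minimal gallery, no alternative), that the apartment containing $[z,y)$ and $C_x$ exists by preordered convexity \ref{1.11}, and that independence of the auxiliary apartments and of $\mathfrak Q_k$ (up to the stated bijections) follows from Lemma \ref{V2.1.11}-type arguments. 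Since all this is already assembled in \cite[Theorem 3.4]{BPGR16}, I would cite that theorem for the bijection itself and only re-derive the shape of the product.

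Finally, for the polynomiality statement: by \ref{sc2} each factor $\mathcal C^m_{\mathfrak Q_k}(C^-_{p_k},\mathbf c)$ has cardinality $\prod_{j\in J_1}(q_j-1)\prod_{j\in J_2}q_j$ and $\mathcal C^m(C_y^-,\mathbf i_{\ell_\pi})$ has cardinality $\prod_{j\in J_2}q_j$, where the $q_j=q_{M_j}$ lie in the finite parameter set $\shq=\{q_i,q_i'\}$; summing the products over the finite index sets $\Gamma^+_{\mathfrak Q_k}(C^-_{p_k},\mathbf i_k,-\eta_k)$ and multiplying over $k$ yields a polynomial in the $q\in\shq$ with integer coefficients. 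Because the combinatorial data — the folding points $t_k$, the types $\mathbf i_k$, the galleries in $\Gamma^+_{\mathfrak Q_k}$, and the index sets $J_1,J_2$ — depend only on the Hecke path $\qp\subset\A$ and on $\A$ itself (not on $\SHI$, $G$, or the field), the polynomial depends only on $\A$. This completes the proof.
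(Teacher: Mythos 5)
Your proposal is correct and takes essentially the same approach as the paper, which does not reprove this statement but cites \cite[Theorem 3.4]{BPGR16} and sketches precisely the argument you reconstruct: lift $\pi$ step by step from its end, applying the local lemma of \ref{sc3b}.1 at each folding point (transported there by an automorphism fixing $C_x$), using the forced extension of a lifted germ to a lifted segment between consecutive folding points, and reading off the polynomiality from the counting formulas of \ref{sc2}. One small slip: the non-emptiness of $\Gamma^+_{\mathfrak Q_k}(C^-_{p_k},\mathbf i_k,-\eta_k)$ does not come from the minimal gallery of type $\mathbf i_k$ lying in it (that gallery ends in $C_{-\xi_k}$, not necessarily in a chamber whose closure contains $-\eta_k$), but, as you state correctly later in the argument, from the Hecke-path chain condition combined with the equivalence (iii)$\Leftrightarrow$(ii) of that lemma.
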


\subsection{Decorated segments and paths}\label{sc6b}

Let us consider $z_0$ and $y$ in $\SHI$ such that $z_0\stackrel{o}{<}y$.

\medskip
\parni{\bf 1) Definition.}  A {\it decorated segment} $\underline{[z_{0},y]}$ is the datum of a segment $[z_{0},y]$ as above and, for any $z\in[z_{0},y[$ (\resp $z\in ]z_{0},y]$) of a positive (\resp negative) chamber $C^ {+}_{z}$ (\resp $C''_{z}$) with vertex $z$ and containing the segment germ $[z,y)$ (\resp $[z,z_{0})$) in its closure.
One asks moreover that $C^ {+}_{z}=pr_{[z,y)}(C)$ (\resp  $C''_{z}=pr_{[z,z_{0})}(C)$) for any local chamber $C=C^ {+}_{z'}$ or $C=C''_{z'}$ as above.
One may remark that, then, $C^ {+}_{z}=pr_{z}(C)$ (\resp  $C''_{z}=pr_{z}(C)$) if $z'\in [z,y]$ (\resp $z'\in[z_{0},z]$).

\medskip
\par Clearly the decorated segment $\underline{[z_{0},y]}$ is entirely determined by the segment $[z_{0},y]$ and any of the local chambers $C^ {+}_{z'}$ or $C''_{z'}$. 
It is entirely contained in any apartment containing  $[z_{0},y]$ and one local chamber $C^ {+}_{z'}$ or $C''_{z'}$ (by  \ref{V2.1.11}).

\par For points $z'_{0}\neq y'$ in $[z_{0},y]$ in the order $z_{0},z'_{0},y',y$ (\ie $z'_0\stackrel{o}{<}y'$) the datum $\underline{[z'_{0},y']}=([z'_{0},y'],(C^ {+}_{z})_{z\in[z'_{0},y'[},(C''_{z})_{z\in ]z'_{0},y']})$ is a decorated segment.

\begin{enonce*}[plain]{2) Lemma} Let $[z_{0},y]$ be a segment as above, $z_{1}\in[z_{0},y]$ and $C_{z_{1}}$ a local chamber with vertex $z_{1}$ contained in a same apartment $A$ as $[z_{0},y]$.
Let us define $C^ {+}_{z}=pr_{[z,y)}(C_{z_{1}})$ and  $C''_{z}=pr_{[z,z_{0})}(C_{z_{1}})$.
Then $\underline{[z_{0},y]}=([z_{0},y],(C^ {+}_{z})_{z\in[z_{0},y[},(C''_{z})_{z\in ]z_{0},y]})$ is a decorated segment.
 Moreover in $A$ all chambers $C^ {+}_{z}$ (\resp $C''_{z}$) are deduced from each-other by a translation.
 \end{enonce*}
 
\NB If $z_{1}$ is $z_{0}$ or $y$ then any   local chamber $C_{z_{1}}$ with vertex $z_{1}$ is contained in a same apartment as $[z_{0},y]$.

 \begin{proof} We have to prove that $C^ {+}_{z}=pr_{[z,y)}(C)$ (\resp  $C''_{z}=pr_{[z,z_{0})}(C)$) for any local chamber $C=C^ {+}_{z'}$ or $C=C''_{z'}$.
 Let us recall that  the chamber $C^ {+}_{z}$ (\resp $C''_{z}$) is  the unique chamber, that contains $\qd=[z,y)$ (\resp $\qd=[z, z_0)$) in its closure, of the prism $prism_{\qd}(C_{z_{1}})$ defined in $A$ as the intersection of all half-spaces $D(\qa, k)$ (for $\qa\in \Phi$ and $ k\in \R$)  that contain $C_{z_{1}}$ and such that  $\qd\subset  M(\qa, k)$. 
 In fact each  prism considered  to define all these chambers in these definitions is  the same prism $prism_{[z_0,y]}(C_{z_{1}})$, as $\qd\subset  M(\qa, k) \iff [z_{0},y] \subset M(\qa,k)$.
 Moreover, as already partially remarked in  \ref{sc0}.2, $prism_{[z_0,y]}(C_{z_{1}})=prism_{[z_0,y]}(C)$ for $C=C^ {+}_{z'}$ or $C=C''_{z'}$.
 Indeed, such a $C$ is in $prism_{[z_0,y]}(C_{z_{1}})$ and any $M(\qa,k)$ containing $[z_{0},y]$ cannot cut $C$, so $prism_{[z_0,y]}(C_{z_{1}})=prism_{[z_0,y]}(C)$.
 
  It is now clear that  $C^ {+}_{z}=pr_{[z,y)}(C)$ (\resp  $C''_{z}=pr_{[z,z_{0})}(C)$) for any local chamber $C=C^ {+}_{z'}$ or $C=C''_{z'}$.
 Moreover the translations of vector in the direction of the line of $A$ containing $\qd$ stabilize the prism and exchange the segment germs.
  So the last assertion of the lemma is clear. 
 \end{proof}

\parni{\bf 3) Definitions.}  A {\it decorated $\ql-$path} $\underline\qp$ is the datum of : 
 
 - a $\ql-$path  $\{\pi(t) \mid 0\leq t\leq 1\}$, 
 
-  a positive  (\resp a negative) local chamber  $C^+_{\pi(t)}$ (\resp $C''_{\pi(t)}$)  of vertex $\pi(t)$ for $0\leq t<1$ (\resp $0<t\leq 1$).

 such that there are numbers $0=t'_{0}<t'_{1}<\cdots t'_{r}=1$ satisfying, for any $1\leq i\leq r$,
 
 - $\{ \qp(t) \mid t'_{i-1}\leq t\leq t'_{i} \}$ is a segment $[\qp(t'_{i-1}),\qp(t'_{i})]$,
 
 - $\underline{[\qp(t'_{i-1}),\qp(t'_{i})]}=([\qp(t'_{i-1}),\qp(t'_{i})],(C^ {+}_{\qp(t)})_{t\in[t'_{i-1},t'_{i}[},(C''_{\qp(t)})_{t\in]t'_{i-1},t'_{i}]})$ is a decorated segment 
 (in particular $\qp(t'_{i-1}) \stackrel{o}{<} \qp(t'_{i}))$, hence $\ql$ is spherical. 
 
 \medskip
 \par A {\it decorated Hecke path} of shape $\ql$ with respect to $C_x$ in $\A$ is a decorated $\ql-$path $\underline\qp$ such that the underlying path $\qp$ is a Hecke path of shape $\ql$ with respect to $C_x$ in $\A$.
 One assumes moreover that the numbers $0<t'_{1}<\cdots <t'_{r}=1$ are equal to the numbers $0<t_1< t_2<\cdots <t_{\ell_\pi}=1$ of Lemma \ref{sc3a} above.

\begin{enonce*}[plain]{4) Proposition} Let $\underline{[z_{0},y]}$ be a decorated segment (with $d^ {v}(z_{0},y)=\ql\in Y^ {++}$ spherical), $C_{x}$ a chamber of vertex $x$ in $\A$ with $x\stackrel{o}{<}z_{0}$ (hence $x\stackrel{o}{<}z$ for any $z\in[z_{0},y]$) and $\qr=\qr_{\A,C_{x}}$ the associated retraction.
We parametrize $[z_{0},y]$ by $z(t)=z_{0}+t(y-z_{0})$ in any apartment containing $[z_{0},y]$.
Then $\qr(\underline{[z_{0},y]})=(\qp=\qr\circ z,(C^ {+}_{\qr z(t)}=\,\qr C^ {+}_{z(t)})_{t\in[0,1[},(C^ {*}_{\qr z(t)}=\,\qr C''_{z(t)})_{t\in]0,1]})$ is a decorated Hecke path of shape $\ql$ with respect to $C_x$ in $\A$.
 \end{enonce*}
 
\begin{NB} Conversely a decorated Hecke path is not always the image by $\qr$ of a decorated segment.
 But the calculations of the number of such liftings (as in Theorem \ref{sc4}) is the main ingredient of our main theorem (\ref{sc7} below) generalizing the Theorem 3.7 in \cite{BPGR16}.
 \end{NB}

 \begin{proof} For any $z\in [z_{0},y[$ (\resp $z\in ]z_{0},y]$), we consider an apartment $A^ {+}_{z}$ (\resp $A''_{z}$) containing $C_{x}$ and $C^ {+}_{z}$ (\resp $C''_{z}$). Then $A^ {+}_{z}\cup A''_{z}$ (or $A^ {+}_{z_{0}}$, $A''_{y}$) contains a neighbourhood of $z$ (or $z_{0}$, $y$) in the segment $[z_{0},y]$.
  By compacity of this segment we get numbers $0=t'_{0}<t'_{1}<\cdots t'_{r}=1$ and apartments $A_{i}$ such that $A_{i}$ contains $C_{x}$, $z([t'_{i-1},t'_{i}])$ and either $C^ {+}_{z(t'_{i-1})}$ or $C''_{z(t'_{i})}$.
   By the projection properties of decorated segments, it contains all other $C^+_{z(t)}$ (\resp $C''_{z(t)}$) for $t\in [t'_{i-1},t'_{i}[$ (\resp $t\in ]t'_{i-1},t'_{i}]$).
  As $\qr$ sends isomorphically $A_{i}$ onto $\A$, we get that $\qr(\underline{[z_{0},y]})$ is a decorated $\ql-$path, with underlying path a Hecke path of shape $\ql$ with respect to $C_x$ in $\A$.
  
\par To get that $\qr(\underline{[z_{0},y]})$ is a decorated Hecke path, we have now to prove that the $t'_{i}$ may be replaced by  the $t_{i}$ associated to this Hecke path by Lemma \ref{sc3a}.
 We may apply the following Lemma to $[\qp(t_{i-1}),\qp(t_{i})]$.
  Any apartment $A$ containing $C_{x}$ and $C''_{z(t_{i})}$ contains $[z(t_{i-1}),z(t_{i})]$, hence also $C''_{z(t)}$ for $t_{i-1}< t \leq t_{i}$ and $C^+_{z(t)}$ for $t_{i-1}\leq t <t_{i}$, by the projection properties of decorated segments.
  But $\qr$ induces an isomorphism from $A$ onto $\A$.
  So $([\qp(t_{i-1}),\qp(t_{i})],(\qr C^+_{z(t)})_{t_{i-1}\leq t <t_{i}}, (\qr C''_{z(t)})_{t_{i-1}< t \leq t_{i}})$ is a decorated segment, as expected.
 \end{proof}

 \begin{enonce*}[plain]{5) Lemma} In an apartment $\A$ of a masure $\SHI$, we consider a local chamber $C_{x}$ and a line segment $[p_{0},p_{1}]$ with $x \,\stackrel{o}{<}  p_{0} \leq p_{1}$.
  We suppose that, for any $p\in]p_{0},p_{1}[$ and any wall $M$ containing $p$, then  $[p,p_{0}]$  is in the half-apartment containing $C_{x}$ limited by $M$.
  We consider the retraction $\qr=\qr_{\A,C_{x}}$. Then,
  
  \par for any segment germ $[z_{1},z)$ in $\SHI$ such that $\qr([z_{1},z))=[p_{1},p_{0})$ (hence $\qr(z_{1})=p_{1}$), there is a unique line segment $[z_{1},z_{0}]$ such that $[z_{1},z_{0})=[z_{1},z)$ and $\qr([z_{1},z_{0}])=[p_{1},p_{0}]$.
  More precisely any apartment $A$ containing $C_{x}$ and $[z_{1},z)$ contains $[z_{1},z_{0}]$.
\end{enonce*}

 \begin{proof} Let $A$ be an apartment containing $C_{x}$ and $[z_{1},z)$.
 Up to the isomorphism $\qr$ from $A$ onto $\A$, one may suppose $A=\A$.
 Then $z_{1}=p_{1}$ and $[p_{1},p_{0}]$ satisfies $[p_{1},p_{0})=[p_{1},z)$, $\qr([p_{1},p_{0}])=[p_{1},p_{0}]$ as expected for $[p_{1},z_{0}]$.
 Let us consider another solution $[p_{1},z_{0}]$, so $[p_{1},z_{0})=[p_{1},p_{0})$ and $\qr([p_{1},z_{0}])=[p_{1},p_{0}]$.
 Let $z'$ be the point satisfying $[p_{1},z'] \subset [p_{1},p_{0}] \cap [p_{1},z_{0}]$ that is the nearest from $p_{0}$.
 One has $z'\neq p_{1}$ and one wants to prove that $z'=p_{0}$.
 If $z'\neq p_{0}$, one may consider a minimal gallery $\mathbf c'$  in $\sht_{z'}^-\SHI$  from $C^-_{z'}=pr_{z'}(C_{x})$ to the segment germ $[z',z_{0})$.
 Clearly $\mathbf c=\qr(\mathbf c')$ is a minimal gallery in $\A^-_{z'}$ from $C^-_{z'}$ to the segment germ $[z',p_{0})$.
 If we write $\g Q=C^-_{z'}$, we have $\mathbf c' \in \shc^m_{\g Q}(C^-_{z'},\mathbf c)$, with the notations of \ref{sc2}.
 But by the hypotheses, no wall $M$ containing $z'$ separates strictly $C_{x}$ (\ie $C^-_{z'}$) from $[z',p_{0})$.
 Hence the formula in \ref{sc2} tells that $\shc^m_{\g Q}(C^-_{z'},\mathbf c)$ is reduced to one element : we have $\mathbf c'=\mathbf c$, $[z',z_{0})=[z',p_{0})$, contrary to the hypothesis on $z'$.
 \end{proof}
 
  \begin{enonce*}[plain]{6) Remark} The definitions and results in 3), 4), 5) above are also true if we replace $C_{x}$ by a negative sector germ $\g S$ in $\A$ and $\qr$ by $\qr_{\A,\g S}$.
  The corresponding results of the Lemma are more or less implicit in \cite{BPGR16}, see the last paragraph of proof of Lemma 2.1 or of Proposition 2.3 in \lc
  \end{enonce*}

\section{Structure constants in spherical cases}\label{sc}

In this section, we compute the structure constants $a_{\mathbf w, \mathbf v}^{\mathbf u}$ of the Iwahori-Hecke algebra $^I\mathcal H_R^\SHI$, assuming that $\mathbf v =\mu . v$ and $\mathbf w = \lambda . w$ are spherical, \ie $\mu$  and $\lambda$ are spherical (see 1.1 for the definitions). As in \cite{BPGR16}, we will adapt some results obtained in the spherical case in \cite{GR13} to our situation.

These structure constants depend on the shape of the standard apartment $\A$ and on the numbers $q_M$ of \ref{1.3}.6.
 Recall that the number of (possibly) different parameters is at most $2.\vert I\vert$. We denoted by $\shq=\{q_1,\cdots,q_l,q'_1=q_{l+1},\cdots,q'_l=q_{2l}\}$ this set of parameters.

\par For $ \ql\in Y^+$ spherical, we denote $w_\ql$ (\resp $w^+_\lambda$) the smallest (\resp longest) element $w\in W^v$ such that $w.\lambda\in \overline{C_f^v}$.
We start by several lemmas.

\begin{lemm}\label{sc6} \cite[3.6]{BPGR16} Let $C_x, C_z \in \mathscr C_0^+$ with $x\leq z$ and $\ql \in Y^+$ spherical, $w\in W^v$.
We write $C^-_z=pr_z(C_x)$. Then
$$
d^W(C_x,C_z) = \lambda . w \Longleftrightarrow
\left \{
\begin{array}{l}
d^W(C_x,z) = \lambda \\
d^{*W}(C^-_z,C_z) = w_\lambda^+ w .
\end{array}
\right.
$$
\end{lemm}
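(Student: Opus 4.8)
The plan is to unfold the $W$-distance $d^W(C_x,C_z)$ in an apartment containing both chambers and to compare it with the two pieces of data on the right. First I would choose an apartment $A$ containing $C_x$ and $C_z$ (which exists since $x\le z$, by \ref{1.11}(a)), and identify $(A,C_x)$ with $(\A,C_0^+)$ via the unique element of $Isom^W_\R$, so that $d^W(C_x,C_z)=\mathbf{w}\in W^+$ is read off directly: writing $\mathbf{w}=\ql'.w'$ one has $z=\ql'$ (as a type-$0$ vertex, using $d^W(C_x,z)=\ql'$) and $C_z=\mathbf{w}.C_0^+=\ql'.w'.C_0^+$. This already gives the equivalence $d^W(C_x,C_z)=\ql.w\iff d^W(C_x,z)=\ql$ together with $C_z=\ql.w.C_0^+$ inside $A$; the content of the lemma is to translate the condition ``$C_z=\ql.w.C_0^+$'' (given that the vertex is $\ql$) into the codistance statement $d^{*W}(C^-_z,C_z)=w_\ql^+w$.

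The key computation is therefore local at the vertex $z=\ql$, inside the twin tangent building $\sht_z\SHI$. By \ref{sc0}.1 the negative chamber $C^-_z=pr_z(C_x)$ is the local chamber carrying $]z,x')$ for $x'$ near $x$ with $x'-z$ regular; in the identification with $\A$ this is the negative local chamber at $\ql$ pointing ``back towards $0$'', i.e. $C^-_z=germ_\ql(\ql - C^v)$ where $C^v$ is the vectorial chamber with $-\ql+\ov{C^v_f}\ni\ldots$, more precisely $C^-_z$ is determined by the direction of $0-\ql=-\ql$. Since $\ql\in Y^+$ is spherical, $-\ql$ lies in the (spherical part of the) closure of a unique vectorial chamber, and the facts recalled in \ref{1.5} identify its $W^v$-position: the longest element $w_\ql^+$ is precisely the element taking the fundamental chamber to the chamber ``opposite, relative to the stabilizer of $\ql$, to the one pointing towards $C^v_f$''. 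Concretely, $op_{\A}(C^-_z)$ is the positive local chamber $germ_\ql(\ql+C^v)$ with $C^v\ni -(-\ql)=\ql$-direction, and one computes $d^W(C^-_z, op_{\A}(C_z))$; but $op_{\A}(C_z)$ is the opposite of $\ql.w.C_0^+$, while $C^-_z$ has $W^v$-position $w_\ql^+$ relative to $C_0^+$ (this is the standard fact that the smallest $w$ with $w.\ql\in\ov{C^v_f}$ is $w_\ql$ and the projection chamber sits at $w_\ql^+$). Chaining these, $d^{*W}(C^-_z,C_z)=d^W(C^-_z, op\,C_z)=w_\ql^+\cdot w$, using that $W$-distance in a building composes and that $op$ reverses orientation. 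I would spell this out using the reduced-decomposition/minimal-gallery description of $d^W$ and $d^{*W}$ from \ref{1.3}.2 rather than abstract Coxeter identities.

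The main obstacle I anticipate is the bookkeeping of which vectorial chamber $C^-_z=pr_z(C_x)$ actually is, and hence getting the element $w_\ql^+$ (and not, say, $w_\ql$ or some conjugate) correct; this requires care because $\ql$ is only spherical, not regular, so its $W^v$-stabilizer $(W^v)_\ql$ is nontrivial and $C^-_z$ is only pinned down up to this stabilizer on one side — one must check that the codistance $d^{*W}(C^-_z,C_z)$ is nonetheless well-defined and equals $w_\ql^+w$ on the nose, which is exactly why $w_\ql^+$ (the longest representative) appears. The second, milder point is independence of the chosen apartment $A$ and of the identification with $\A$: for the vertex-distance $d^W(C_x,z)=\ql$ this is part of \ref{1.13}, and for $pr_z(C_x)$ and the codistance it follows from \ref{sc0}.1 and the fact that any two apartments containing $C_x$ and $C_z$ (hence $z$, hence the relevant segment germs) are related by a Weyl-isomorphism fixing them, by \ref{1.11}(b). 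Finally, for the ``$\Leftarrow$'' direction one reverses the argument: given $d^W(C_x,z)=\ql$ pick an apartment $A$ containing $C_x,z$ and the chamber $C_z$ with the prescribed codistance to $C^-_z$; reading off in $\A$ shows $C_z=\ql.w.C_0^+$, i.e. $d^W(C_x,C_z)=\ql.w$, and $\ql.w\in W^+$ since $\ql\in Y^+$. Since this lemma is quoted as \cite[3.6]{BPGR16}, I expect the intended proof to be essentially this unfolding, perhaps abbreviated by citing the spherical-case analysis of \cite{GR13}.
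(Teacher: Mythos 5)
Your proposal is correct and is essentially the intended argument: the paper itself gives no proof of this lemma (it is quoted from \cite[3.6]{BPGR16}), but its proof of the companion Lemma \ref{sc6aa} proceeds exactly as you do --- fix an apartment containing $C_x$ and $C_z$, identify it with $(\A,C_0^+)$ so that $C_z=\ql.w.C_0^+$ and $z=\ql$, and evaluate the codistance by a Chasles relation through an intermediate chamber at $z$. The one step you assert rather than prove is precisely the crux you flag: that $pr_z(C_x)$ is the negative local chamber $germ_z\bigl(z-(w_\ql^+)^{-1}C^v_f\bigr)$, equivalently that $d^{*W}\bigl(pr_z(C_x),\,germ_z(z+C^v_f)\bigr)=w_\ql^+$. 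Your justification of this (``the smallest $w$ with $w\ql\in\ov{C^v_f}$ is $w_\ql$ and the projection chamber sits at $w_\ql^+$'') is a restatement of the claim, not an argument, and it is exactly here that one could mistakenly get $w_\ql$ or $(w_\ql^+)^{-1}$ instead. The honest verification is an inversion-set computation: for $x'\in C^v_f$ regular and near $0$ one has $\qa(x'-\ql)>0$ if and only if $\qa(\ql)<0$, or $\qa(\ql)=0$ and $\qa\in\QF^+$; and this set of roots is $(w_\ql^+)^{-1}\QF^-$ because the inversion set of $w_\ql^+$ (the longest element of the coset $(W^v)_{\ql^{++}}w_\ql$) is $\{\qa\in\QF^+\mid\qa(\ql)\leq 0\}$, whereas that of $w_\ql$ is only $\{\qa\in\QF^+\mid\qa(\ql)<0\}$ --- this is where sphericity of $\ql$ enters, since otherwise the coset has no longest element. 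Once this is in place, $d^{*W}(C^-_z,C_z)=d^W\bigl(C^-_z,\,germ_z(z-wC^v_f)\bigr)=w_\ql^+w$ follows by composing distances as you say, the well-definedness points you raise are handled correctly by \ref{sc0}.1 and \ref{1.11}(b), and the converse direction is the same computation read backwards. (The analogous identification, in the dominant case, is what the paper extracts from the proof of Lemma \ref{V2.1.11} when proving Lemma \ref{sc6aa}, where the element $w^+_{\mu^{++}}$ appears.)
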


\begin{lemm}\label{sc6aa} Let $C_z, C_y \in \mathscr C_0^+$ with $z\,\stackrel{o}{<}\,y$ and $\qm \in Y^+$ spherical, $v\in W^v$.
We write  $C^+_z=pr_z(C_y)$ and $C''_y=pr_{[y,z)}(C^+_z)=pr_y(C^+_z)$. Then

$$ (1) \qquad\qquad
d^W(C_z,C_y) = \mu v \Longleftrightarrow
\left \{
\begin{array}{l}
d^W(C_z,C^+_z) = v(w_{v^{-1}.\mu})^{-1} \\
d^{W}(C^+_z,C_y) = \mu^{++}w_{v^{-1}.\mu}.
\end{array}
\right.\qquad\qquad\qquad\qquad\qquad
$$

   $$(2) \quad d^W(C^+_z,C_y)= \mu^{++}w_{v^{-1}.\mu} \iff d^W(C^+_z,y)=\mu^{++} \mathrm{\ and\ }
d^{*W}(C''_y,C_y)=w^+_{\mu^{++}}w_{v^{-1}.\mu}$$

\end{lemm}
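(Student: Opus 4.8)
The plan is to treat both assertions as the ``downward'' companions of Lemma \ref{sc6}, deducing them from that lemma together with the Chasles (right-action) property of \ref{1.13} and a single explicit computation of a projection inside one apartment.

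For part (2) I would simply apply Lemma \ref{sc6} to the pair $(C^+_z,C_y)$. Since $C^+_z\in\mathscr C_0^+$ has vertex $z<y$, and $\mu^{++}$ is spherical (it is $W^v$-conjugate to $\mu$, and sphericity is a $W^v$-invariant property of faces), \ref{sc6} with $\lambda=\mu^{++}$ and $w=w_{v^{-1}.\mu}$ gives exactly
\[
d^W(C^+_z,C_y)=\mu^{++}w_{v^{-1}.\mu}\ \Longleftrightarrow\ d^W(C^+_z,y)=\mu^{++}\ \text{ and }\ d^{*W}\bigl(pr_y(C^+_z),C_y\bigr)=w^+_{\mu^{++}}w_{v^{-1}.\mu}.
\]
It remains only to see that $pr_y(C^+_z)=pr_{[y,z)}(C^+_z)=C''_y$: the segment germ $[y,z)$ (generic, since $z\stackrel o<y$) is precisely the one used in \ref{sc0}.1 to build $pr_y(C^+_z)$, so it lies in the closure of that chamber, and hence by the defining property of $pr_{[y,z)}(\cdot)$ in \ref{sc0}.2 the two projections coincide.

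For part (1), the starting point is that any apartment $A$ containing $C_z$ and $C_y$ — there is one, since $z\leq y$, by \ref{1.11} — also contains $C^+_z=pr_z(C_y)$, so \ref{1.13} yields the Chasles identity $d^W(C_z,C_y)=d^W(C_z,C^+_z)\cdot d^W(C^+_z,C_y)$ in $W^+$. A short computation in $W=W^v\ltimes Y$, using $w_{v^{-1}.\mu}(v^{-1}\mu)=(v^{-1}\mu)^{++}=\mu^{++}$, shows $v(w_{v^{-1}.\mu})^{-1}\cdot\mu^{++}w_{v^{-1}.\mu}=\mu v$; this proves ``$\Longleftarrow$'' at once. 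For ``$\Longrightarrow$'', assume $d^W(C_z,C_y)=\mu v$ and identify $(A,C_z)$ with $(\mathbb A,C_0^+)$, so that $C_y$ has vertex $\mu$ and direction $v.C^v_f$; then it suffices to prove $d^W(C_z,C^+_z)=v(w_{v^{-1}.\mu})^{-1}$, since the factorisation $d^W(C^+_z,C_y)=\mu^{++}w_{v^{-1}.\mu}$ follows from Chasles. To identify the direction of $C^+_z=pr_z(C_y)$ I would use the construction in \ref{sc0}.1: $pr_0(C_y)$ is the vectorial chamber containing $\mu+\varepsilon\xi$ for $\xi\in v.C^v_f$ and small $\varepsilon>0$. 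Applying $w_{v^{-1}.\mu}v^{-1}$ sends $\mu$ to $\mu^{++}$ and $\xi$ into $w_{v^{-1}.\mu}.C^v_f$, and for $i$ with $\alpha_i(\mu^{++})=0$ one has $\alpha_i>0$ on $w_{v^{-1}.\mu}.C^v_f$, because $w_{v^{-1}.\mu}$ is the minimal-length representative of the left coset $W^v(J)w_{v^{-1}.\mu}$ of the (finite) stabiliser $W^v(J)$ of $\mu^{++}$, whence $w_{v^{-1}.\mu}^{-1}(\Phi^+_J)\subset\Phi^+$. Therefore $w_{v^{-1}.\mu}v^{-1}(\mu+\varepsilon\xi)\in C^v_f$ for $\varepsilon$ small, so $pr_0(C_y)$ has direction $v(w_{v^{-1}.\mu})^{-1}.C^v_f$ and $d^W(C_z,C^+_z)=v(w_{v^{-1}.\mu})^{-1}$, as wanted.

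I expect this last point — reconciling the minimal coset representative $w_{v^{-1}.\mu}$ with the facial structure of the Tits cone at $\mu$, which is exactly where sphericity of $\mu$ is genuinely used (it keeps $W^v(J)$ finite, so the classical positivity of minimal coset representatives applies) — to be the only real content; everything else is bookkeeping in $W^v\ltimes Y$ and appeals to \ref{sc6}, \ref{sc0}, \ref{1.13} and \ref{1.11}.
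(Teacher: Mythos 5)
Your proof is correct, and on both halves it takes a genuinely different route from the paper's. For part (2) the paper does not invoke Lemma \ref{sc6}: it passes to the opposite chambers $-C^+_y$ and $-C_y$ in a fixed apartment, shows $d^W(-C^+_y,C''_y)=w^+_{\mu^{++}}$ using the proof of Lemma \ref{V2.1.11}, and concludes by Chasles on codistances. Your observation that (2) is literally Lemma \ref{sc6} applied to the pair $(C^+_z,C_y)$ with $\lambda=\mu^{++}$ and $w=w_{v^{-1}.\mu}$ — once one checks $C''_y=pr_{[y,z)}(C^+_z)=pr_y(C^+_z)$, which you justify correctly from \ref{sc0} and the uniqueness in Lemma \ref{V2.1.11} — is shorter and perfectly legitimate, since $C^+_z$ and $C_y$ are indeed in $\mathscr C_0^+$ with $z\leq y$ and $\mu^{++}$ is spherical. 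For part (1) both arguments reduce by Chasles (valid for the three chambers $C_z,C^+_z,C_y$ in a common apartment, with vertices $z\leq z\leq y$) to identifying one of the two factors; the paper translates the problem to the vertex $y$, sets $C^+_y=C^+_z+\mu$, identifies it with $pr_{[y,y+\mu)}(C_y)$ via the ``closest chamber containing the segment germ'' characterization, and computes $d^W(C_y,C^+_y)$ as the smallest $w$ with $v^{-1}\mu\in w\overline{C^v_f}$, whereas you compute the direction of $pr_z(C_y)$ at $z$ directly from the perturbation description of \ref{sc0}.1 together with the positivity $w_{v^{-1}.\mu}^{-1}(\alpha_i)\in\Phi^+$ for $i$ in the type $J$ of $\mu^{++}$. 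The two computations are equivalent — both amount to recognizing $w_{v^{-1}.\mu}$ as the minimal representative of the coset $W^v_{\mu^{++}}w_{v^{-1}.\mu}$ — so the difference is one of packaging; your version avoids the ``no wall containing $[z,y]$ separates $C^+_y$ from $C_y$'' step. One small remark: the positivity of minimal coset representatives holds for any standard parabolic in a Coxeter group and does not require $W^v(J)$ to be finite; sphericity of $\mu$ is really needed elsewhere, namely for $w^+_{\mu^{++}}$ (the longest element of the stabilizer) to exist in part (2) and, together with the hypothesis $z\stackrel{o}{<}y$, for the projections $pr_z(C_y)$ and $pr_y(C^+_z)$ to be defined. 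This does not affect the validity of your argument.
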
 

\begin{proof}

(1) Let us fix an apartment $A'$ containing $C_z$, $C_y$ and so $C^+_z$ and identify $(A',C_z)$ with $(\mathbb A, C^+_0)$.

Let us  suppose that $d^W(C_z,C_y) = \mu v$ and denote $C^+_y:=C^+_z+\mu$. 
Clearly $d^W(C_z, C_z+\mu)=\mu $ and, by Chasles in $A'$,  $\mu .v= d^W(C_z, C_y)=d^W(C_z, C_z+\mu)d^W(C_z+\mu, C_y)$, hence $d^W(C_z+\mu, C_y)=v$ \ie $C_y=(C_z+\mu)*v$ (\cf \ref{1.13}).
By $G-$invariance of $d^W$ and Chasles, we have  $d^W(C_z, C^+_z)=d^W(C_z+\mu, C^+_y)=d^W(C_z+\mu, C_y)d^W(C_y, C^+_y)=vd^W(C_y, C^+_y)$. 
Among the walls containing $[z,y]$, no one separates $C^+_y $ from $C_y$, so the local chamber $C^+_y$  is the closest chamber to $C_y$ among those containing the segment-germ $]y, y+\mu)$ in their closure, \ie $C^+_y =pr_{[y, y+\mu)}(C_y)$ and $d^W(C_y, C^+_y)=w'$ where $w' $ is the smallest $w\in W$ such that 
$]y, y+\mu)\subset  \overline{C_y*w}=\overline{C_{z+\mu}*vw}=\overline{C_{z}*\mu vw}=\mu vw\overline{C_{z}}$,  as we identified $C_z $ with $C^+_0$ . 
As $\mu=y-z$, we can see  $w' $ as the smallest $w\in W$ such that $]z, z+\mu)\subset  vw \overline{C_z}$
  \ie $v^{-1}\mu\in w\overline{C_f^v}$ (as we identified $C_z $ with $C^+_0$), so $w'=(w_{ v^{-1}.\mu} )^{-1}$. 
  Finally, we get $d^W(C_z, C^+_z)=v(w_{ v^{-1}.\mu} )^{-1}$ and so $$d^{W}(C^+_z,C_y) = (d^W(C_z,C^+_z))^{-1} d^W(C_z,C_y)=  w_{ v^{-1}.\mu}v^{-1}\mu v (w_{ v^{-1}.\mu})^{-1}w_{ v^{-1}.\mu} =\mu^{++}w_{ v^{-1}.\mu}.$$ 

In the same way, if we suppose that  $d^W(C_z,C^+_z) = v(w_{v^{-1}.\mu})^{-1}$ and $
d^{W}(C^+_z,C_y) = \mu^{++}w_{v^{-1}.\mu}$, by Chasles we obtain $d^W(C_z,C_y) = \mu v$.

\smallskip
(2)  We consider now the opposite local  chamber at $y$ of $C^+_y$ (\resp $C_y$) in $A'$ which is denoted by  $-C^+_y$ (\resp $-C_y$). If $d^W(C^+_z,C_y)= \mu^{++}w_{v^{-1}.\mu}$, we have $d^W(C^+_z,y)=\mu^{++}=d^W(C^+_z,C^+_y)$ and  $d^W(C^+_y,C_y)=w_{ v^{-1}.\mu}$, so $d^{*W}(-C^+_y,C_y)=w_{ v^{-1}.\mu}$. By the proof of \ref{V2.1.11}, we see that $C''_y$ and $-C^+_y$ are such that $d^W(-C^+_y,C''_y)=d^W(C''_y,-C^+_y)=w^+_{\mu^{++}}$ (the longest element of $W^v_{\mu^{++}}$ the fixer of $\mu^{++}$ in $W^v$). By Chasles in $A'$, we have  $$d^{*W}(C''_y,C_y)=d^W(C''_y,-C_y)=d^W(C''_y,-C^+_y)d^W(-C^+_y,-C_y)=w^+_{\mu^{++}}.w_{ v^{-1}.\mu}.$$

The converse result is clear by Chasles. 
\end{proof}

\subsection{Local study}\label{sc6c}

\par We shall need a partial generalization of Lemma \ref{sc3b}.1 dealing with decorations.

\par  We consider a point $z\in \A$, a negative local chamber $C_{z}^-$ in $\A_{z}^-$ and the retraction $\qr=\qr_{\A_{z},C^-_{z}}$ in $\sht_{z}\SHI$.
Let $C^+_{z}$ (\resp $C^*_{z}$) be a positive (\resp negative) local chamber in $\A_{z}$, we also  introduce  the retraction $\qr'=\qr_{\A_{z},C^+_{z}}$  in $\sht_{z}\SHI$.
Let $\xi$ and $\eta$ be two segment germs in $\A_z^+ =\A\cap \sht^+_z\SHI$ of the same ``type'' (\ie $\eta=[z,z+w.\ql)$, $\qx=[z,z+w'.\ql)$ for some $\ql\in Y^ {++}$ and $w,w'\in W^v$).
We suppose that $\ov{C^+_{z}}$ contains $\eta$ and $\ov{C^*_{z}}$ contains the opposite $-\qx=[z,z-w'\ql)$ of $\qx$ in $\A_{z}$. We denote $-\eta=[z, z-w.\ql) $ the opposite of $\eta $ in $\A_{z}$ and $\widetilde C_{z}=pr_{-\eta}(C^+_{z})$. 
 Let $\mathbf i$ be the type of a minimal gallery from $C_z^-$ to $C_{z}^*$.

\begin{lemm*} The following conditions are equivalent:

\par (i) There exists a segment germ $\zeta$  opposite $\eta$ in $\sht_{z}^-\SHI$ and a negative local chamber $C''_{z}$ containing $\qz$ in its closure such that $\qr ( \zeta) = -\xi$, $\qr (C''_{z})=C^*_{z}$ and $C''_{z}=pr_{\zeta}(C^+_{z})$.

\par (ii)  There exists a gallery $\mathbf c \in \Gamma_{C^+_{z}}^+(C_{z}^-,\mathbf i)$ ending in the local chamber $\widetilde C_{z}$.

\smallskip 

\par  Moreover the possible $(\qz,C''_{z})$ are in one-to-one correspondence with the disjoint union of the sets  $\mathcal C^m_{C^+_{z}}(C_{z}^-,\mathbf c)$ for $\mathbf c$ in the set $\Gamma_{C^+_{z}}^+(C_{z}^-,\mathbf i, \widetilde C_{z})$ .
\end{lemm*}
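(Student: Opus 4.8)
The strategy is to reduce this "decorated" statement to the undecorated Lemma of \ref{sc3b}.1, whose equivalences $(i)\iff(ii)\iff(iii)$ and whose counting bijection we may quote. First I would observe that the conditions $(i)$ and $(ii)$ here each contain, as their "first component", exactly the data appearing in the old lemma: a segment germ $\zeta$ opposite $\eta$ with $\qr(\zeta)=-\xi$ on the one side, and a centrifugally folded gallery $\mathbf c\in\Gamma^+_{C^+_z}(C_z^-,\mathbf i)$ on the other (note that the chamber $\mathfrak Q$ of the old lemma is here specialized to $\mathfrak Q=C^+_z$, which indeed contains $\eta$ in its closure by hypothesis, and that the type $\mathbf i$ is the one from $C_z^-$ to $C_z^*$, where $C_z^*\supset-\xi$; one should check $C_z^*=C_{-\xi}$ in the sense of the old lemma, i.e.\ that $d^W(C_z^-,C_z^*)$ is of minimal length among chambers containing $-\xi$ — this uses that $C_z^*$ and $C_z^+$ are opposite-ish and $\widetilde C_z=pr_{-\eta}(C_z^+)$, and is where the projection bookkeeping enters). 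So the "naked" parts of $(i)$ and $(ii)$ are equivalent by \ref{sc3b}.1.

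The real content is then to promote such a $\zeta$ (resp.\ $\mathbf c$) to the decorated datum: given $\zeta$ opposite $\eta$ with $\qr(\zeta)=-\xi$, one simply \emph{defines} $C''_z:=pr_\zeta(C^+_z)$ — this exists and is unique by Lemma \ref{V2.1.11} applied in an apartment containing $\zeta$ and $C^+_z$ (such an apartment exists by \ref{1.3}.a/b) — and one must verify that $\qr(C''_z)=C^*_z$. Here I would argue: $\qr$ restricted to the tangent building $\sht_z\SHI$ is $\rho_{\A_z,C_z^-}$, which sends $C^+_z$ to some chamber and $\zeta$ to $-\xi$; since retractions commute with projections onto segment germs that they fix the image of (the retraction is, locally, induced by an isomorphism of apartments, cf.\ \ref{suse:Paths}), one gets $\qr(pr_\zeta(C^+_z))=pr_{-\xi}(\qr C^+_z)$. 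The point is to identify this with $C^*_z$: one checks $C^*_z=pr_{-\xi}(\text{something})$ directly in $\A_z$ from the hypothesis $\overline{C^*_z}\supset-\xi$ together with the minimality built into the type $\mathbf i$, exactly as in the proof of Lemma \ref{sc6aa}(2) where $C''_y$ and $-C^+_y$ are related by the longest element $w^+_{\mu^{++}}$. Conversely, given $\mathbf c$, the old lemma produces the $\zeta$, and then one attaches $C''_z$ as above.

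The counting statement follows by transport: the old lemma gives a bijection between the $\zeta$'s and $\coprod_{\mathbf c\in\Gamma^+_{C^+_z}(C_z^-,\mathbf i,-\eta)}\mathcal C^m_{C^+_z}(C_z^-,\mathbf c)$, but I would rather re-index it by the terminal chamber: the set of $\mathbf c$ ending in $\widetilde C_z=pr_{-\eta}(C^+_z)$ is in bijection with those ending in $-\eta$ in the sense of the old lemma, because $\widetilde C_z$ is the unique chamber of $prism_{-\eta}(C^+_z)$ containing $-\eta$ in its closure (Lemma \ref{V2.1.11} again), so "ending in $\widetilde C_z$" and "having $-\eta$ in the closure of the last chamber" single out the same galleries up to the data already fixed. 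Since the passage $\zeta\mapsto(\zeta,pr_\zeta(C^+_z))$ is a bijection onto its image (the second coordinate is determined by the first), the decorated pairs $(\zeta,C''_z)$ of $(i)$ are in bijection with $\coprod_{\mathbf c\in\Gamma^+_{C^+_z}(C_z^-,\mathbf i,\widetilde C_z)}\mathcal C^m_{C^+_z}(C_z^-,\mathbf c)$, as claimed.

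\textbf{Main obstacle.} The delicate point is not the existence/uniqueness of the decoration $C''_z$ (that is Lemma \ref{V2.1.11}), but the compatibility $\qr(pr_\zeta(C^+_z))=pr_{-\xi}(\qr C^+_z)=C^*_z$: one must be careful that $\qr$ does not fix $C^+_z$ (it is centered at $C_z^-$, not $C^+_z$), so the identification of $C^*_z$ as the correct projection must be done intrinsically in $\A_z$ from the minimal-gallery type $\mathbf i$ and the hypotheses on $C^+_z,C^*_z,\xi,\eta$, mirroring the $w^+_{\mu^{++}}$-bookkeeping in Lemma \ref{sc6aa}. Getting that identification clean — and checking $C_z^*$ plays the role of "$C_{-\xi}$ of minimal length" in the hypotheses of Lemma \ref{sc3b}.1 — is the crux.
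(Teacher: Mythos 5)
There is a genuine gap, and it sits exactly where you located the ``crux''. First, the reduction to Lemma \ref{sc3b}.1 does not match the data of the present statement: here $\mathbf i$ is the type of a minimal gallery from $C_z^-$ to the \emph{given} chamber $C^*_z$, which is only assumed to contain $-\qx$ in its closure --- it need not be the chamber $C_{-\qx}$ at minimal $W$-distance from $C_z^-$ that appears in the old lemma, so the two types differ in general; moreover the galleries here must end in the single chamber $\widetilde C_z=pr_{-\eta}(C^+_z)$, not merely in some chamber whose closure contains $-\eta$. Consequently the ``naked parts'' of (i) and (ii) are not the old lemma, and your re-indexing of $\Gamma^+_{C^+_z}(C_z^-,\mathbf i,-\eta)$ by $\Gamma^+_{C^+_z}(C_z^-,\mathbf i,\widetilde C_z)$ identifies two genuinely different sets. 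Relatedly, the condition $\qr(C''_z)=C^*_z$ is \emph{not} automatic once one sets $C''_z=pr_\qz(C^+_z)$: one only gets that $\qr(C''_z)$ is \emph{some} negative chamber containing $-\qx$ in its closure, so this condition is an extra constraint cutting down the set of admissible $\qz$; the count cannot be obtained by transporting the old lemma's count and forgetting the second coordinate.

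Second, the step $\qr(pr_\qz(C^+_z))=pr_{-\qx}(\qr C^+_z)=C^*_z$ fails twice. The commutation of $\qr=\qr_{\A_z,C_z^-}$ with the projection would require an apartment containing $C_z^-$, $\qz$ and $C^+_z$ simultaneously, and such an apartment does not exist in general --- its existence is precisely what the folding combinatorics measures the failure of. And even granting it, $pr_{-\qx}(C^+_z)$ has no reason to equal the arbitrary $C^*_z$. The missing idea is the \emph{second} retraction $\qr'=\qr_{\A_z,C^+_z}$ centered at $C^+_z$: it is $\qr'$, not $\qr$, that fixes $C^+_z$, sends $\qz$ to $-\eta$ (any apartment containing $\qz$ and $C^+_z$ is mapped isomorphically onto $\A_z$ fixing $C^+_z$), and hence sends $C''_z=pr_\qz(C^+_z)$ to $\widetilde C_z=pr_{-\eta}(C^+_z)$. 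The hypothesis $\qr(C''_z)=C^*_z$ is used for a different purpose: since $\qr$ preserves $W$-distance to $C_z^-$, it guarantees $d^W(C_z^-,C''_z)=d^W(C_z^-,C^*_z)$, hence the existence of a (unique) minimal gallery $\mathbf c'$ of type $\mathbf i$ from $C_z^-$ to $C''_z$; the bijection of the statement is then $(\qz,C''_z)\mapsto\mathbf c'$, with $\mathbf c=\qr'(\mathbf c')\in\Gamma^+_{C^+_z}(C_z^-,\mathbf i,\widetilde C_z)$ and $\mathbf c'\in\mathcal C^m_{C^+_z}(C_z^-,\mathbf c)$. Without introducing $\qr'$ and separating these two roles, the argument does not close.
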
  

\begin{proof}

If $\zeta$, a segment germ  opposite $\eta$ in $\sht_{z}^-\SHI$, and  $C''_{z}$, a  negative local chamber containing $\qz$ in its closure, are  such that $\qr ( \zeta) = -\xi$, $\qr (C''_{z})=C^*_{z}$ and $C''_{z}=pr_{\zeta}(C^+_{z})$, there is a unique minimal gallery $\mathbf {c '}$ from $C_{z}^-$ to $C''_{z}$ of type  $\mathbf i$ (as $\qr$ induces a bijection between the minimal galleries from $C^-_{z}$ to $C''_{z}$ and the minimal galleries from $C^-_{z}$ to $C^*_{z}$). 
The gallery $\mathbf {c }=\qr'(\mathbf {c '}) $ is in $  \Gamma_{C^+_{z}}^+(C_{z}^-,\mathbf i, \widetilde C_{z})$. Indeed, $\qz$ is opposite $\eta$ so $\qr'(\qz)=-\eta$, hence the image of $C''_{z}=pr_{\zeta}(C^+_{z})$ by $\qr'$ is $\widetilde C_{z}=pr_{-\eta}(C^+_{z})$.

Reciprocally, let ${\mathbf c} \in \Gamma_{C^+_{z}}^+(C_{z}^-,\mathbf i)$ be a gallery ending in the local chamber $\widetilde C_{z}$. We can lift this gallery with respect to $\qr'$  while preserving the first chamber $C_{z}^-$ to obtain a minimal gallery $\mathbf {c '}$ of type $\mathbf i$.
 Let us call $C''_{z}$ the last chamber of the lifted gallery. The isomorphism associated to $\qr' $ (see  \ref{suse:Paths}) between an apartement $A_z$ containing $C^+_{z}$ and $C''_{z}$ and $\A_z$ enables us to say that the lifting of $-\eta$ is $\zeta$ a segment germ $\zeta$  opposite $\eta$ in  $A_z$ and $C''_{z}=pr_{\zeta}(C^+_{z})$. As the gallery $\mathbf c$ is of type $\mathbf i$, $\qr $ send  $C''_{z}$ onto the end of the minimal gallery of same type beginning at $C_{z}^-$, so $\qr (C''_{z})=C^*_{z}$. Moreover, $\zeta$ is of the same type that $-\eta$ (and $-\xi$), so $\qr ( \zeta) = -\xi$. 

 From the first paragraph above, we get an injective map $(\qz, C''_{z}) \mapsto \mathbf {c '}$ from the set of pairs $(\qz, C''_{z})$ as in $(i)$ and the disjoint union of the sets  $\mathcal C^m_{C^+_{z}}(C_{z}^-,\mathbf c)$ for $\mathbf c$ in the set $\Gamma_{C^+_{z}}^+(C_{z}^-,\mathbf i, \widetilde C_{z})$: indeed, $\qz$ is fully determined by $C''_{z}$ (and $\ql$).
The second paragraph proves that this map is surjective.
\end{proof}

\subsection{Opposite line segments}\label{sc6d}

\par The following lemma  will be usefull in  Theorem \ref{sc7}.

\begin{lemm*} Let us consider in a masure $\SHI$ two preordered line segments or rays $\qd_{1},\qd_{2}$ in apartments $A_{1},A_{2}$, sharing the same origin $x$.
One supposes the segments germs $germ_{x}(\qd_{1})$ and $germ_{x}(\qd_{2})$ opposite (in any apartment containing them both).
Then there is a line in an apartment $A$ of $\SHI$ containing $\qd_{1}$ and $\qd_{2}$.
In particular, if $\qd_{1},\qd_{2}$ are line segments (\resp rays), then $\qd_{1}\cup\qd_{2}$ is also a line segment (\resp a line).
\end{lemm*}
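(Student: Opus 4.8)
The plan is to reduce everything to a single application of the ``gluing'' axioms for masures, namely the preordered convexity Proposition \ref{New.1.11} combined with property \ref{1.3}.a. First I would fix an apartment $A_{1}$ containing $\qd_{1}$ and an apartment $A_{2}$ containing $\qd_{2}$, together with a point $y_{1}\in\qd_{1}$, $y_{1}\neq x$, and a point $y_{2}\in\qd_{2}$, $y_{2}\neq x$. By hypothesis the segment germs $germ_{x}(\qd_{1})$ and $germ_{x}(\qd_{2})$ are opposite in any apartment containing both; pick such an apartment $B$. Then in $B$ the points $y_{1}$ (moved down along $\qd_{1}$ if necessary so that $[x,y_{1}]\subset B\cap A_{1}$, using \ref{1.3}.a applied to the preordered segment $[x,y_{1}]$) and $y_{2}$ lie on a common line through $x$, so the preorder relation holds in one direction: either $y_{1}\leq y_{2}$ or $y_{2}\leq y_{1}$ in $B$ (the germs being opposite, one of $y_{1}-x$, $y_{2}-x$ is in $\sht$ and the other in $-\sht$, and $y_{2}-y_{1}=(y_{2}-x)-(y_{1}-x)$ lands in $\sht$ or $-\sht$ accordingly). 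Say $y_{2}\leq y_{1}$, i.e. $y_{1}\geq y_{2}$.

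Next I would invoke Proposition \ref{New.1.11}(a): since $y_{2}\leq y_{1}$, the (shrunk) segments — or more precisely the local faces $germ_{y_2}([y_2,x))$ with vertex $y_2$ and $germ_{y_1}([y_1,x))$ with vertex $y_1$, which are opposite local segment germs on the line — are contained in a single apartment $A$. Concretely I would set up the two local faces so that one is $F^{\ell}(y_1, F^v)$ with $[y_1,x)\in F^v$-direction and the other $F^{\ell}(y_2,-F^v)$, apply part (a) to get an apartment $A$ containing both, and then apply \ref{1.3}.a (the closure/enclosure statement) inside $A$: an isomorphism of $A_{1}$ onto $A$ fixing $[x,y_1]$ forces $\qd_{1}\cap A$ to contain $cl_{A_1}^{\#}([x,y_1])\supseteq[x,y_1]$, and similarly for $\qd_{2}$. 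The key point is that $A$ contains a neighbourhood of $x$ on each of $\qd_1$ and $\qd_2$, these two pieces are opposite in $A$, hence they fit together into a genuine line segment of $A$ through $x$; since $A$ is an affine space and $\qd_1,\qd_2$ are straight (line segments or rays, by hypothesis preordered hence contained in apartments where they are honest affine segments/rays), the union $\qd_1\cup\qd_2$ is a line segment (resp. a line) inside $A$.

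The one subtlety — and the step I expect to be the main obstacle — is passing from ``a common apartment contains small germs around $x$'' to ``a common apartment contains all of $\qd_{1}$ and all of $\qd_{2}$''. For the ray case this requires knowing that once an apartment contains $[x,y_1)$ and $[x,y_2)$ opposite, and contains the line through $x$ they span, it must contain the whole rays; this is where I would use that $A\cap A_1$ is closed and convex (property \ref{1.3}.a, ``the intersection of two apartments is always closed'') together with the fact that $A\cap A_1$ contains a germ of $\qd_1$ at $x$: a closed convex subset of the apartment $A_1$ containing a sub-ray germ of the ray $\qd_1$ and a point of $\qd_1$ beyond must contain all of $\qd_1$ up to that point, and letting the point go to infinity gives the whole ray. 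For line segments the same convexity argument applies directly with $y_1$ and $y_2$ the endpoints. Thus $\qd_{1}\cup\qd_{2}$ lies in $A$ and, being convex and straight on each side of $x$ with opposite directions, is a single line segment (resp. line) of $A$, which proves the lemma. \qed
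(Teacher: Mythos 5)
Your reduction to Proposition \ref{1.11} does not close the essential gap, which is precisely the global gluing that the lemma asserts. The local faces you feed into Proposition \ref{1.11}(a) have vertices $y_{1}$ and $y_{2}$, so the apartment $A$ it produces is only known to contain the germs $germ_{y_1}([y_1,x))$ and $germ_{y_2}([y_2,x))$ --- small pieces of $\qd_1$ near $y_1$ and of $\qd_2$ near $y_2$ --- together with the closed convex hull of those two local faces, i.e.\ a segment $[y_2,y_1]$ computed inside $A$. Nothing so far puts $x$ in $A$, let alone a neighbourhood of $x$ on each $\qd_i$: your assertion that ``$A$ contains a neighbourhood of $x$ on each of $\qd_1$ and $\qd_2$'' confuses the germs at $y_i$ pointing toward $x$ with germs at $x$. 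The next step is then circular: to invoke property a) of \ref{1.3}.1 for the preordered segment $[x,y_1]$ you need both $A_1$ and $A$ to already contain $[x,y_1]$, which is what is being proved; the enclosure $cl^{\#}$ of a local face with vertex $y_1$ is a filter at $y_1$ and does not reach the distant point $x$. Finally, even granting $[y_2,y_1]\subset A$, you would still have to show that this segment of $A$ passes through $x$ and that its two halves coincide with the portions $[y_2,x]$ of $\qd_2$ and $[x,y_1]$ of $\qd_1$ --- and that statement is the lemma. (A side issue: convexity of $A\cap A_1$ is not free either; (MA ii) yields it only when the intersection contains a \emph{generic} ray, and your segments are merely preordered.)

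The paper does not reprove this statement: it quotes \cite[Lemma 4.9]{GR13} for line segments and the proof of \cite[Prop.\ 5.4]{R11} for rays. The argument there is a continuity/connectedness one: parametrize $\qd_1$ by $\qp(t)$, consider the supremum of the $t$ for which $\qd_2\cup[x,\qp(t)]$ is a line segment contained in some apartment, show that the supremum is attained (using that intersections of apartments are closed), and then push past the critical point using the twin tangent building there, opposite segment germs and the Lemma of \ref{1.3}.3. Some step of this kind, propagating the common apartment along the whole of $\qd_1$ and $\qd_2$ rather than only near the endpoints, is unavoidable, and it is the ingredient missing from your proposal.
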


\begin{proof} The case of line segments is Lemma 4.9 in \cite{GR13}. The case of rays may be deduced from the fact stated in the part 2 of the proof of \cite[Prop. 5.4]{R11}. As we shall not use it, we omit the details.
\end{proof}

\subsection{The main formula}\label{sc7}

Let us fix two local chambers $C_x$ and $C_y$ in $\mathscr C_0^+$ with $x\leq y$ and $d^W(C_x,C_y) = \mathbf u=\qn.u\in W^+$. We consider $\mathbf w=\ql.w$ and $\mathbf v=\qm.v$ in $W^+$. 
Then we know that the structure constant $a_{\mathbf w,\mathbf v}^{\mathbf u}$ is the number of $C_{z_{0}}\in \mathscr C_0^+$ with $x\leq {z_{0}}\leq y$, $d^W(C_x,C_{z_{0}}) = \mathbf w$ and $d^W(C_{z_{0}},C_y) = \mathbf v$; moreover this number is finite, see Proposition \ref{PrFinite2}.
 In Lemmas \ref{sc6} and \ref{sc6aa} we gave conditions equivalent to these $W-$distance conditions.

We  choose the standard apartment $\A$ containing $C_x$ and $C_y$, and we identify $C_{x}$ with the fundamental local chamber $C_{0}^+$. 

\par The datum of $z_{0}$ is equivalent to the datum of the segment $[z_{0},y]$ or of the decorated segment $\un{[z_{0},y]}$ associated, as in \ref{sc6b}.2, to $[z_{0},y]$ and $C_{y}$.
We consider then  the decorated Hecke path $\underline\qp$ image of $\un{[z_{0},y]}$ by the retraction $\qr_{\A,C_{x}}$.

\par To the Hecke path $\qp$ underlying a decorated Hecke path $\un\qp$ are associated $\ell_{\qp}\in\N$ and numbers $t_{0}=0<t_{1}<t_{2}<\cdots<t_{\ell_{\qp}}=1$ as in  Lemma \ref{sc3a} and  Definition \ref{sc6b}.3.
 We write $p_{k}=\qp(t_{k})$. 
 We write  $C^+_{p}$ (\resp $C^*_{p}$ instead of $C''_{p}$) the decorations of $\un\qp$ at a point $p$ of $\qp$.
 We write $C^+_{z}$ (\resp $C''_{z}$) the decorations of a decorated segment at one of its points $z$.
 
 \par We use freely the notations from \ref{sc0}, \ref{sc1} and \ref{sc2}.

\begin{theo*} Assume $\qm$ and $\ql$ spherical.
Then the structure constant $a_{\mathbf w,\mathbf v}^{\mathbf u}$ is given by:

 $$a_{\mathbf w,\mathbf v}^{\mathbf u} = \sum_{\un\qp} \ \prod_{k=0}^ {\ell_{\qp}}\,a_{\un\qp}(k)$$
 
 where $\un\qp$ runs over the decorated Hecke paths in $\A$ of shape $\qm^ {++}$ with respect to $C_{x}$ from $p_{0}=x+\ql=\ql$ to $y=x+\qn=\qn$, and the integers $a_{\un\qp}(k)$ are given by :

 \medskip
 \par (1) $a_{\un\qp}(\ell_{\qp})=\sum_{\mathbf d\in\Gamma_{C_y}^+(C^-_{y},\mathbf i_\ell,\tilde C_y)} \sharp \mathcal C^m_{C_y} (C^-_{y},\mathbf d)$, where $\mathbf i_\ell$ is the type of a fixed minimal gallery from $C^-_{y}$ to $C^*_{y}$  and $\tilde C_y$ is the unique local chamber at $y$ in $\A$ such that 
 $d^{*W}(\tilde C_y,C_y)=w^+_{\mu^{++}}w_{v^{-1}.\mu}$ .
 
 \medskip
 \par (2) For $1\leq k \leq \ell_{\qp}-1$,  $a_{\un\qp}(k)= \sum _{\mathbf c\in\Gamma_{C^+_{p_{k}}}^+(C^-_{p_{k}},\mathbf i_k,\tilde C_{p_{k}})} \sharp \mathcal C^m_{C^+_{p_{k}}} (C^-_{p_{k}},\mathbf c)$, where $\mathbf i_k$ is the type of a fixed minimal gallery from $C^-_{p_{k}}$ to $C^*_{p_{k}}$  and $\tilde C_{p_{k}}=pr_{-\eta_{k}}(C^+_{p_{k}})$ with $-\eta_{k}$ the segment germ of origin $p_{k}$ in $\A$ opposite $\eta_{k}=\qp_{+}(t_{k})$.
  
 \medskip
 \par (3) $a_{\un\qp}(0)=\sum _{\mathbf e\in\Gamma_{C^-_{p_0}}^+(C^+_{p_{0}},\mathbf i,C'_{p_0})} \sharp \mathcal C^m_{C^-_{p_0}} (C^+_{p_{0}},\mathbf e)$, where $\mathbf i$ is the type of a fixed reduced decomposition of $w_{v^{-1}.\mu}.v^{-1}$ and $C'_{p_{0}}$ is the unique local chamber at $p_{0}=\qp(0)$ in $\A$ such that 
 $d^{*W}( C^-_{p_{0}},C'_{p_{0}})= w_\lambda^+ w$.
\end{theo*}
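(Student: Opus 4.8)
The plan is to count the chambers $C_{z_0}$ by breaking each one into data that can be counted building-block by building-block along the retracted Hecke path. The starting point is Proposition~\ref{PrFinite2}: $a_{\mathbf w,\mathbf v}^{\mathbf u}$ is the number of $C_{z_0}\in\mathscr C_0^+$ with $x\leq z_0\leq y$, $d^W(C_x,C_{z_0})=\mathbf w$ and $d^W(C_{z_0},C_y)=\mathbf v$. Having identified $(\A,C_x)$ with $(\A,C_0^+)$, I would first use the dictionary of Lemmas \ref{sc6} and \ref{sc6aa}. Since $\ql$ is spherical, the condition $d^W(C_x,C_{z_0})=\ql.w$ becomes $d^W(C_x,z_0)=\ql$ together with $d^{*W}(C^-_{z_0},C_{z_0})=w_\ql^+w$, where $C^-_{z_0}=pr_{z_0}(C_x)$. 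Since $\qm$ is spherical, the condition $d^W(C_{z_0},C_y)=\qm.v$ is, via \ref{sc6aa}(1)--(2) applied with $C^+_{z_0}=pr_{z_0}(C_y)$ and $C''_y=pr_y(C^+_{z_0})$, equivalent to $d^W(C^+_{z_0},y)=\qm^{++}$, $d^{*W}(C''_y,C_y)=w^+_{\qm^{++}}w_{v^{-1}.\qm}$, and $d^W(C_{z_0},C^+_{z_0})=v(w_{v^{-1}.\qm})^{-1}$. The pair $(z_0,C_y)$ with $z_0\leq y$ and the decorations $C^+_{z_0}=pr_{z_0}(C_y)$, $C''_{z}=pr_{[z,z_0)}(C^+_{z_0})$ for $z\in\,]z_0,y]$, together with the positive decorations $C^+_z=pr_{[z,y)}(C^+_{z_0})$, assemble (by Lemma \ref{sc6b}.2) into a decorated segment $\un{[z_0,y]}$; conversely $z_0$ is recovered from it. So counting the $C_{z_0}$ amounts to counting decorated segments $\un{[z_0,y]}$ with the three $W$-distance/codistance constraints above, where I note $d^v(z_0,y)=\qm^{++}$ because $d^W(C^+_{z_0},y)=\qm^{++}$ and $x\stackrel{o}{<}z_0$ (using $\ql$ spherical, $x=0$, $p_0=\ql\stackrel{o}{<}\cdots$).

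\textbf{Next I would} push everything down by the retraction $\qr=\qr_{\A,C_x}$. By Proposition \ref{sc6b}.4, $\un\qp:=\qr(\un{[z_0,y]})$ is a decorated Hecke path of shape $\qm^{++}$ with respect to $C_x$, from $\qr(z_0)=p_0=\ql$ (forced: $\qr$ fixes $[x,z_0]\subset cl_\A^\#(\{x,z_0\})$, and $d^W(C_x,z_0)=\ql$ means $z_0$ is at $W$-distance $\ql$ from $C_0^+$, i.e. $\qr(z_0)=\ql$) to $\qr(y)$; and since $d^W(C_x,C_y)=\qn.u$ with $y$ the fixed chamber, $\qr(y)=\qn$ and $C_y$ retracts into its position. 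The decorated Hecke path carries: the underlying Hecke path $\qp$ with its distinguished points $p_k=\qp(t_k)$ from Lemma \ref{sc3a}; the positive decorations $C^+_{p}$; the negative decorations $C^*_p$. For fixed $\un\qp$, the number of decorated segments $\un{[z_0,y]}$ over $C_x$ retracting to $\un\qp$ and carrying $C_y$ at the top (with $C_y$ the prescribed chamber and the extra codistance condition at $y$) is exactly what the three integers $a_{\un\qp}(k)$ will count, $k=0,\dots,\ell_\qp$. The overall sum is then over the finitely many $\un\qp$ that arise, giving the stated formula — so the bulk of the work is establishing the product decomposition $\sharp\{\un{[z_0,y]}\mapsto\un\qp\}=\prod_{k=0}^{\ell_\qp}a_{\un\qp}(k)$.

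\textbf{For the product decomposition} I would lift step by step from the top, imitating the proof of Theorem \ref{sc4}, but now bookkeeping the decorations. Fix $\un\qp$. A lifting $\un{[z_0,y]}$ is built by: first lifting the top segment-germ at $y$ with its negative chamber $C^*_y$; then, on the open stretches $]t_{k-1},t_k[$ where $\qp$ is unfolded, applying Lemma \ref{sc6b}.5 to extend uniquely (so these stretches contribute nothing); and at each folding/distinguished point $p_k$ ($1\leq k\leq\ell_\qp-1$) applying the local Lemma of \ref{sc6c}, which counts the pairs $(\zeta,C''_z)$ — opposite segment-germ $\zeta$ to $\eta_k=\qp_+(t_k)$ with negative chamber $C''_{p_k}$, lifting the incoming data, with $C''_{p_k}=pr_\zeta(C^+_{p_k})$ — as $\coprod_{\mathbf c\in\Gamma^+_{C^+_{p_k}}(C^-_{p_k},\mathbf i_k,\tilde C_{p_k})}\mathcal C^m_{C^+_{p_k}}(C^-_{p_k},\mathbf c)$, $\tilde C_{p_k}=pr_{-\eta_k}(C^+_{p_k})$, which is precisely $a_{\un\qp}(k)$ in (2). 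The endpoint $k=\ell_\qp$ at $y$ is the same local lemma but the target chamber is pinned by the codistance condition $d^{*W}(C''_y,C_y)=w^+_{\qm^{++}}w_{v^{-1}.\qm}$ coming from \ref{sc6aa}(2) — this is the $\tilde C_y$ with $d^{*W}(\tilde C_y,C_y)=w^+_{\qm^{++}}w_{v^{-1}.\qm}$ of (1), so the count is (1) with $C_y$ playing the role of $C^+_{p_k}$ and $C^-_y=pr_y(C_x)$ that of $C^-_{p_k}$. The endpoint $k=0$ at $p_0=\ql$ is dual: here one must prolong the lifted segment past $z_0$ with its negative chamber $C^-_{z_0}=pr_{z_0}(C_x)$, subject to $d^{*W}(C^-_{z_0},C_{z_0})=w_\ql^+w$ from Lemma \ref{sc6}; transporting through $\qr$ and using the local lemma of \ref{sc6c} in the opposite direction (lifting $C^+_{p_0}$ against $\qr_{\A,C^-_{p_0}}$, with target $C'_{p_0}$ the chamber satisfying $d^{*W}(C^-_{p_0},C'_{p_0})=w_\ql^+w$, along a minimal gallery of type $\mathbf i$ a reduced word for $w_{v^{-1}.\qm}v^{-1}$, the length of $d^W(C_{z_0},C^+_{z_0})=v(w_{v^{-1}.\qm})^{-1}$ read the right way) gives $a_{\un\qp}(0)$ in (3). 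To glue consecutive steps one needs that the lifted pieces, built in different apartments, fit together into a genuine decorated segment: the segment-germs match by construction, and Lemma \ref{sc6d} (opposite line segments) guarantees that two preordered segments sharing an origin with opposite germs sit on a line in one apartment, so successive pieces concatenate; the decoration-coherence ($C^+_z=pr_{[z,y)}$ of the next chamber, etc.) follows from the projection properties in Lemma \ref{sc6b}.2 and the fact that $\qr$ restricted to each apartment is an isomorphism. \textbf{The hard part} is this gluing: one must check that the data produced at the independent folding points, together with the unique unfolded extensions, actually \emph{determine and are determined by} a single decorated segment — i.e. that the map to $\prod_k a_{\un\qp}(k)$ is a bijection and not merely a surjection or injection — and that no global coherence obstruction (e.g. compatibility of the chamber at $y$ with the chamber the lift of the top stretch forces, or the shape condition $d^v(z_0,y)=\qm^{++}$ surviving the lift) is violated; this is where Lemmas \ref{sc6b}.5, \ref{sc6d} and the projection identities of \ref{sc6b}.2 must be combined carefully, exactly as in \cite[Th. 3.7]{BPGR16} but with the extra negative decorations tracked throughout.
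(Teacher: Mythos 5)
Your proposal follows essentially the same route as the paper's proof: translate the two $W$-distance conditions via Lemmas \ref{sc6} and \ref{sc6aa}, encode each $C_{z_0}$ as a decorated segment $\un{[z_{0},y]}$ (Lemma \ref{sc6b}.2), retract to a decorated Hecke path by Proposition \ref{sc6b}.4, and count liftings step by step from $y$ downwards using Lemma \ref{sc6c} at the interior points $p_k$, Lemmas \ref{sc6b}.5 and \ref{sc6d} on the unfolded stretches and for concatenation, and the codistance conditions at the two endpoints for $a_{\un\qp}(\ell_\qp)$ and $a_{\un\qp}(0)$. The gluing issue you flag as the hard part is handled in the paper exactly by the ingredients you name (the unique prolongation of Lemma \ref{sc6b}.5, the opposite-segments Lemma \ref{sc6d}, and the observation that $\qr(C^+_{z_0})=C^+_{p_0}$ is determined by $\un\qp$ so the count at $k=0$ is independent of the lifting), so your argument is correct and matches the paper's.
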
 

\begin{remas*} 1) Actually $\prod_{k=1}^ {\ell_{\qp}-1}\,a_{\un\qp}(k)$ is the number of decorated segments $\un{[z_{0},y]}$ such that $\qr(\un{[z_{0},y]})=\un\qp$ and $C^*_{y}=C''_{y}$. It may be zero.

 2) If $a_{\mathbf w,\mathbf v}^{\mathbf u}\neq 0$, then necessarily $\qn$ is spherical (\ie $\mathbf u \in W^ {+g}$), as then any Hecke path of shape $\qm^ {++}$ is increasing for $\stackrel{o}{<}$ (see \ref{suse:Paths}).
The arguments of \cite{BPGR16} are sufficient for this result.

3) From this theorem we deduce that $a_{\mathbf w,\mathbf v}^{\mathbf u}\neq 0$ is equivalent to the following:

- there exists a Hecke path in $\A$ of shape $\qm^ {++}$ with respect to $C_{x}$ from $p_{0}=x+\ql=\ql$ to $y=x+\qn=\qn$,

- there exists a decoration $\un \qp$ of $\qp$ (always true),

- for this decorated Hecke path each of the sets $\Gamma_{C_y}^+(C^-_{y},\mathbf i_\ell,\tilde C_y)$, $\Gamma_{C^-_{p_0}}^+(C^+_{p_{0}},\mathbf i,C'_{p_0})$ and $\Gamma_{C^-_{p_0}}^+(C^+_{p_{0}},\mathbf i,C'_{p_0})$ is non empty.

4) The number of decorated Hecke paths $\un\qp$ as above is finite: we know that the number of paths $\qp$ is finite (it is a consequence of Theorem 3.5 in \cite{BPGR16}) and, as $\qm$ is spherical, the number of decorations of $\qp$ is finite.

\end{remas*}

 \begin{proof} $a_{\mathbf w,\mathbf v}^{\mathbf u}$ is the number of local chambers $C_{z_{0}}\in \mathscr C_0^+$ with $x\leq z_{0}\leq y$, $d^W(C_x,C_{z_{0}}) = \mathbf w$ and $d^W(C_{z_{0}},C_y) = \mathbf v$ (we chose $C_{x},C_{y}$ in $\A$ such that $d^W(C_x,C_y) = \mathbf u$).
We know that this number is finite, see Proposition \ref{PrFinite2}.
The datum of $z_{0}$ is equivalent to the datum of the segment $[z_{0},y]$ or of the decorated segment $\un{[z_{0},y]}$ associated, as in \ref{sc6b}.2, to $[z_{0},y]$ and $C_{y}$.
We use now the retraction $\qr=\qr_{\A,C_{x}}: \SHI_{\geq x}\to\A$.
We have $y=\qr(y)=x+\qn$ and the condition $d^W(C_{x},z)=\ql$ is equivalent to $\qr(z)=x+\ql=p_{0}$.
So $\qr(\un{[z_{0},y]})$ has to be a decorated Hecke path $\un\qp$ as asked in the theorem. 
And we get the formula:

$$
a_{\mathbf w,\mathbf v}^{\mathbf u} = \sum_{\un\pi}\Bigg ( \hbox{number of liftings of }\un\pi\Bigg )\times \Bigg (\hbox{number of } C_{z_{0}} \hbox{  for ${z_{0}}$ given}\Bigg ),$$

 It is possible to calculate like that for
$\qr(C^+_{z_{0}})=C^+_{p_{0}}$ is well determined by the decorated path $\un\qp$. 
 Hence, the number of $C_{z_{0}}$ only depends on $\un\pi$ and not on the lifting of $\un\pi$.
 In \cite[Theorem 3.7]{BPGR16} we argued the same way, but with Hecke paths (without decoration) so we had to suppose $\mu^{++}$ regular to get that $\rho(C^+_z)$ was well determined by the path $\pi$.

\par For short, we write $\ell=\ell_{\qp}$.
We compute the number of liftings of $\un\qp$ by looking successively at the number of liftings of $\un{[p_{\ell-1},p_{\ell}]}$, $\un{[p_{\ell-2},p_{\ell-1}]}$, \ldots, $\un{[p_{0},p_{1}]}$.

\medskip
\par 1) The number $a_{\un\qp}(\ell)$ of liftings of $\un{[p_{\ell-1},p_{\ell}=y]}$ is the number of liftings $[z_{\ell-1},z_{\ell}=y]$ of $[p_{\ell-1},p_{\ell}=y]$ and $C''_{y}$ of $C^*_{y}$ such that $[y,z_{\ell-1})\subset \ov{C''_{y}}$ and $d^{*W}(C''_y,C_y)=w^+_{\mu^{++}}w_{v^{-1}.\mu}$ (by Lemma \ref{sc6aa}.2).
But $[y,z_{\ell-1}]$ is determined by $[y,z_{\ell-1})$ (\cf Lemma \ref{sc6b}.5) and $[y,z_{\ell-1})$ is determined by $c''_{y}$ and $\qm^ {++}$. 
So we just have to count the liftings $C''_{y}$ of $C^*_{y}$.
 By the same way as in the proof of the lemma \ref{sc6c}, we are going to prove that the possible $C''_{y}$ are in one-to-one correspondance with the disjoint union of the sets $\mathcal C^m_{C_y}(C_{y}^-,\mathbf c)$ for $\mathbf c$ in  $\Gamma_{C_y}^+(C_{y}^-,\mathbf i_\ell,\tilde C_y)$.
In this case, the tools are $\qr=\qr_{\A,C_{x}}$,  that on $\sht_{y}\SHI$, coincides with $\qr=\qr_{\A,C_{y}^-}$(\ref{sc3b}.2) and  $\qr'=\qr_{\A,C_{y}}$. 

If $C''_{y}$ is given, there is a unique minimal gallery $\mathbf c'$ from $C_{y}^-$ to $C''_{y}$ of type $\mathbf i_\ell$ (as $\qr$ induces a bijection between the minimal galleries from $C_{y}^-$ to $C''_{y}=pr_{[y,z_{\ell-1})}(C_{y}) $ and those from $C_{y}^-$ to $C^*_{y}=pr_{[y,p_{\ell-1})}(C_{y}) $). By Lemma \ref{sc6aa}(2) we know that $d^{*W}(C''_y,C_{y})=w^+_{\mu^{++}}w_{v^{-1}.\mu}$, so $\qr' (C''_y)=\tilde C_{y}$, and the gallery $\mathbf c=\qr'(\mathbf c')$ is in $\Gamma_{C_{y}}^+(C_{y}^-,\mathbf i_\ell,\tilde C_{y})$, while $\mathbf c'$ is in $\mathcal C^m_{C_{y}}(C_{y}^-,\mathbf c)$.

Reciprocally, if $\mathbf c$  is in the set $\Gamma_{C_{y}}^+(C_{y}^-,\mathbf i_\ell,\tilde C_{y})$, let us consider $C''_{y}$ the last chamber of $\mathbf c'$ a lifted gallery of $\mathbf c$ with respect to $\qr'$. The condition on $\tilde C_{y}$ enables to say that $d^{*W}(C''_y,C_{y})=w^+_{\mu^{++}}w_{v^{-1}.\mu}$ and so, by lemma \ref{sc6aa} the decoration $C''_{y}$ of $\un{[z_{\ell-1}, y]}$ at $y$ satisfies the expected codistance condition. 

\par 2) For $1\leq k\leq \ell-1$, we suppose given the lifting $\un {[z_k, y]}$ of $\un {\pi}\vert_{[t_k,1]}$. The number $a_{\un\qp}(k)$   of suitable liftings  $\un{[z_{k-1}, z_k]}$ of $\un{[p_{k-1}, p_k]}$ is the number of pairs $([z_{k-1}, z_k],C''_{z_k})$ of   liftings  $[z_{k-1}, z_k]$ of $[p_{k-1}, p_k]$ and $C''_{z_k}$ of ${C^*_{p_k}}$ such that  $[z_k, z_{k-1})$ is opposite to    $[z_k, z_{k+1})$ (see Lemma \ref {sc6d}),  $[z_k, z_{k-1})\in \ov{C''_{z_k}}$ and $C''_{z_k}$ is the decoration of    $[z_k, z_{k-1}]$ associated to $C_y$.     Let us consider an apartment   $A$ containing $C_{x}$ and $C''_{z_{k+1}}$    hence also $[z_k, z_{k+1}]$ and $C_{z_{k+1}}^+$ (see Lemma \ref{sc6b}.5). 
The restriction $\qr\vert_{A}$ is the restriction to $A$ of an automorphism $\varphi$ of $\SHI$ fixing $C_{x}$ that induces an isomorphism $\varphi\vert_{\sht_{z_k}\SHI}$ from ${\sht_{z_k}\SHI}$ onto ${\sht_{p_k}\SHI}$. So the number of liftings   $\un{[z_{k-1}, z_k]}$ is the same as in the case $z_k=p_k$ \ie the number of liftings   $\un{[z_{k-1}, p_k]}$ of  $\un{[p_{k-1}, p_k]}$ such that $[p_k,z_{k-1})$ is opposite to $\qr([z_k, z_{k+1}])=[p_k, p_{k+1}]$ and $\qr (C''_{z_k})=\qr_{\A_{p_k}, {C_{p_k}^-} } \circ \varphi\vert_{\sht_{z_k}\SHI}(C''_{z_k})=\qr_{\A, C_{p_k}^-}(C''_{p_k}) =C_{p_k}^*$. 

By Lemma \ref{sc6c} the possible $([p_k, z_{k-1}), C''_{p_k})$ (and so the possible $([p_k, z_{k-1}], C''_{p_k})$ by Lemma \ref{sc6b}.5) are in one-to-one correspondance with the union 
the sets $\mathcal C^m_{C_{p_k}^+}(C_{p_k}^-,\mathbf c)$ for $\mathbf c$ in the set $\Gamma_{C_{p_k}^+}^+(C_{p_k} ^-,\mathbf i_\ell,\tilde C_{p_k})$, with $\tilde C_{p_k}=pr_{-\eta_k}(C_{p_k}^+)$.

\par 3) For the last step of the lifting, by the same way as before, we suppose given the lifting \green{of} $\un{[z_0, y]}$ and we suppose $z_0=p_0$. So we know that $C_{p_0}^+=C_{z_0}^+$. 
The Lemma \ref{sc6} says that  $d^{*W}(C_{p_0}^-,C_{z_0})=w^+_{\ql}w$, and  Lemma \ref{sc6aa} that $d^W((C_{p_0}^+,C_{z_0})=w_{v^{-1}\mu }v^{-1}$. So, as before, the number $C_{z_0}$ is the number of elements of the different sets $\mathcal C^m_{C_{p_0}^-}(C_{p_0}^+,\mathbf e)$ where $\mathbf e$  is a gallery element of  $\Gamma_{C_{p_0}^-}^+(C_{p_0} ^+,\mathbf i,C'_{p_0})$ as $\mathbf i$ is the type of a minimal gallery from $C_{p_0}^+$  to  $C_{z_0}$ that retracts by $\qr_{\A, C_{p_0}^-}$ to a gallery from $C_{p_0}^+$  to  $C'_{p_0}$.
\end{proof} 

\subsection{Consequence}\label{sc8} 

\par The above explicit formula, together with the formula for $\sharp \mathcal C^m_{\mathfrak Q}(C^-_{z},\mathbf c)$ in \ref{sc2}, tell us that the structure constant $a_{\mathbf w,\mathbf v}^{\mathbf u} $ is a polynomial in the parameters $q_{i}-1,q'_{i}-1$ for $q_i,q'_i\in\shq$ with coefficients in $\N=\Z_{\geq0}$ and that this polynomial depends only on $\A$, $W$, $\mathbf w$, $\mathbf v$ and $\mathbf u$.
So we have proved the conjecture 1 of the introduction in this generic case: when $\ql$ and $\mu$ are spherical. 

\par Note that we have not got all the structure constants $a_{\mathbf w,\mathbf v}^{\mathbf u} $ for the generic Iwahori-Hecke algebra $^I\mathcal H_\Z^g$.
The cases $\mathbf w\in W^v\ltimes V_{0}$ or $\mathbf v\in W^v\ltimes V_{0}$ (\ie $\ql\in V_{0}$ or $\qm\in V_{0}$ in the above notations) are missing.
We deal with them in the following section.

\section{Structure constants in remaining generic cases}\label{s4}

\subsection{The problem}\label{4.1} 

\par Let us choose $C_x,C_y\in \mathscr C_0^+$ with $x\leq y$ and $d^W(C_x,C_y) = \mathbf u=\qn.u\in W^+=W^v\ltimes Y^+$. 
Then  the structure constant $a_{\mathbf w,\mathbf v}^{\mathbf u}$
(for $\mathbf w=\ql.w$ and $\mathbf v=\qm.v$ in $W^+$)  is the number of $C_{z_{0}}\in \mathscr C_0^+$ with $x\leq {z_{0}}\leq y$, $d^W(C_x,C_{z_{0}}) = \mathbf w$ and $d^W(C_{z_{0}},C_y) = \mathbf v$, see Proposition \ref{PrFinite2}.

\par In Theorem \ref{sc7}, we computed $a_{\mathbf w,\mathbf v}^{\mathbf u}$ when $\mathbf w,\mathbf v$ are spherical (\ie $\ql,\qm\in Y\cap\sht^\circ$).
We shall compute it below in the remaining cases where $\mathbf w,\mathbf v\in W^ {+g}=W^v\ltimes(Y\cap(\sht^\circ\cup V_{0}))$.
So, in the affine or strictly hyperbolic cases, we shall get $a_{\mathbf w,\mathbf v}^{\mathbf u}$ for any $\mathbf w,\mathbf v\in W^+$.
But we get, in general, these structure constants for $\mathbf w,\mathbf v\in W^{+g}=W^v\ltimes Y^ {+g}$, \ie we get the structure constants of $^I\mathcal H^g$, see \ref{sc8} and \ref{4.6}.

\par We start with a lemma analogous to lemmas \ref{sc6} and \ref{sc6aa}.

\begin{lemm}\label{4.2}  
Let $C_x, C_z \in \mathscr C_0^+$ with $x\leq z$ and $\ql \in Y^{+0}$ , $w\in W^v$.
We write $C^+_x=pr_x(C_z)$, then
$$
d^W(C_x,C_z) = \lambda . w \Longleftrightarrow
\left \{
\begin{array}{l}
d^W(C_x,z) = \lambda \\
d^{W}(C^-_z,C_z) = w .
\end{array}
\Longleftrightarrow
\left \{
\begin{array}{l}
d^W(C_x,z) = \lambda \\
d^{W}(C_{x},C^+_{x}) =  w .
\end{array}
\right.
\right.
$$
\par Actually $d^W(C_x,z) = \lambda\in V_{0}$ implies $x\leq z$ and $z\leq x$.
So $C^-_{z}:=pr_{z}(C_{x})$ is well defined, by \ref{sc0}.1, and is a positive local chamber.
\end{lemm}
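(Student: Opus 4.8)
The plan is to reduce all three equivalences to an explicit computation inside a single apartment, exploiting the one feature that makes the case $\ql\in V_0$ degenerate: every root $\beta\in\QF$ vanishes on $\ql$. Indeed $\QF\subset\bigoplus_{i\in I}\Z\qa_i$ while $\qa_i(\ql)=0$ for all $i$ (as $\ql\in V_0=\bigcap_i\ker\qa_i$), so adding $\ql$ to any vector of $V$ leaves it in the same open vectorial chamber. This single observation is the heart of the matter; everything else is bookkeeping inside $W^v\ltimes Y$.

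First I would settle the ``Actually'' assertion and fix notation. Since $V_0\subset\sht$ is a linear subspace, $\pm\ql\in\sht$; as $d^W(C_x,z)=\ql$ means that the $Y$-part of $d^W(C_x,C_z)$ is $\ql$ (equivalently, $z\mapsto\ql$ in the identification of any apartment containing $C_x$ and $C_z$ with $\A$ sending $C_x$ to $C_0^+$), this forces $x\le z$ and $z\le x$. Hence, assuming $\ql\ne0$ (for $\ql=0$ one has $z=x$ and all three conditions simply read $d^W(C_x,C_z)=w$), $C^-_z=pr_z(C_x)$ is defined by \ref{sc0}.1 — point $z\le x$, positive chamber $C_x$ of vertex $x$ — and is a positive local chamber, and likewise $C^+_x=pr_x(C_z)$ is a positive local chamber. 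By the preordered convexity property \ref{New.1.11}(a) there is an apartment $A$ containing $C_x$ and $C_z$; I identify $(A,C_x)$ with $(\A,C_0^+)$, so $x=0$, $z=\ql$, and $d^W(C_x,C_z)=\ql w'$ for some $w'\in W^v$, with $C_z=germ_\ql(\ql+w'(C^v_f))$.

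Next I would compute the two projections in $\A$. For $C^-_z=pr_z(C_x)$: a point $x'=\varepsilon v$ with $v\in C^v_f$ and $\varepsilon>0$ small satisfies $\beta(x'-z)=\varepsilon\beta(v)-\beta(\ql)=\varepsilon\beta(v)$ for every $\beta\in\QF$, so $x'-z\in C^v_f$; hence $]z,x')$ is a positive generic segment germ of direction $C^v_f$, giving $C^-_z=germ_z(z+C^v_f)$. Symmetrically, for $z'=\ql+\varepsilon u$ with $u\in w'(C^v_f)$ one gets $\beta(z'-x)=\varepsilon\beta(u)$ for all $\beta\in\QF$, so $z'-x$ lies in the chamber $w'(C^v_f)$ and $]x,z')$ lies in $germ_0(w'(C^v_f))$, i.e. $C^+_x=w'.C_0^+$.

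Finally I would read off the distances. Both $C^-_z=germ_z(z+C^v_f)$ and $C_z=germ_z(z+w'(C^v_f))$ are positive chambers with the same vertex $z$, and identifying $(A,C^-_z)$ with $(\A,C_0^+)$ by the translation taking $z$ to $0$ carries $C_z$ onto $w'.C_0^+$; hence $d^W(C^-_z,C_z)=w'$, and also $d^W(C_x,C^+_x)=d^W(C_0^+,w'.C_0^+)=w'$. Therefore $d^W(C_x,C_z)=\ql w$ holds precisely when $d^W(C_x,z)=\ql$ and $w'=w$, which is exactly each of the two right-hand conditions. There is no real obstacle beyond the initial observation that all roots vanish on $\ql\in V_0$: this is what forces both projections into the positive open chambers, so that the codistances appearing in Lemmas \ref{sc6} and \ref{sc6aa} are here replaced by ordinary $W$-distances; once this is noted, the argument is routine.
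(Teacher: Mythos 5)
Your proof is correct and follows essentially the same route as the paper's: work in one apartment containing $C_x$ and $C_z$, show $C^-_z=C_x+\ql$ and $C^+_x=C_z-\ql$, and read off the $W$-distances. You merely make explicit the ``calculation in $A$'' that the paper leaves implicit (namely that every $\beta\in\QF$ vanishes on $V_0$, so translation by $\ql$ fixes all walls and vectorial chambers), and you handle the degenerate case $\ql=0$, which the paper passes over.
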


\begin{proof} By definition $d^W(C_x,C_z) = \lambda . w$ implies $d^W(C_x,z) = \lambda$ (\ref{1.13}).
Suppose now $d^W(C_x,z) = \lambda$. Then $d^v(x,z)=\ql\in V_{0}$, so any apartment $A$ containing $x$ or $z$ contains $z$ or $x$ and, in $A$, one has $z=x+\ql\leq x$; this is a consequence of \ref{1.3}.1.a, as any enclosure is stable under $V_{0}$.
Hence $C^-_{z}=pr_{z}(C_{x})\in A$ is well defined, by \ref{sc0}.1, and  a positive local chamber.
Actually $C^-_{z}=C_{x}+\ql$ (calculation in $A$).
We have also $C^+_{x}=C_{z}-\ql$.
It is now clear that $d^W(C_x,C_z) = \lambda . w  \iff d^{W}(C^-_z,C_z) = w \iff d^{W}(C_{x},C^+_{x}) =  w $.
\end{proof}

\subsection{First reduction}\label{4.3} 

\par We consider $\mathbf u,\mathbf v,\mathbf w \in W^+$ and write $\mathbf u=\qn.u,\mathbf v=\qm.v,\mathbf w=\ql.w$ with $\ql,\qm,\qn\in Y^+$ and $u,v,w\in W^v$.
We choose $C_x,C_y\in \mathscr C_0^+$ with $x\leq y$ and $d^W(C_x,C_y) = \mathbf u$; we may suppose $C_{x},C_{y}\subset\A$. 
We choose $C_{z_{0}}\in \mathscr C_0^+$ with $x\leq {z_{0}}\leq y$, $d^W(C_x,C_{z_{0}}) = \mathbf w$ and $d^W(C_{z_{0}},C_y) = \mathbf v$.

\par If $\ql\in Y^ {+0}=Y\cap V_{0}$, one has $d^W(C_{x},z_{0})=\ql$ (Lemma \ref{4.2}) and  $z_{0}\in\A$, more precisely $z_{0}=x+\ql$ (as we saw in the proof of Lemma \ref{4.2}).

\par If $\qm\in Y^ {+0}$, then we get  $z_{0}\in\A$, more precisely $z_{0}=y-\qm$, by Lemma \ref{4.2} applied to $C_{z_{0}},C_{y}$ instead of $C_{x},C_{z}$.

\par In both cases $z_{0}$ has to be a well determined point in $\A$  and $\qn=d^v(x,y) \in W^v\ql + W^v\qm$.
In particular, if $\mathbf w,\mathbf v \in W^ {+g}$ \ie $\ql,\qm  \in Y^ {+g}$, one has also $\qn  \in Y^ {+g}$ \ie $\mathbf u \in W^ {+g}$.

\par  We want now to compute the number $a_{\mathbf w,\mathbf v}^{\mathbf u}$ of $C_{z_{0}}\in \mathscr C_0^+$ with $x\leq {z_{0}}\leq y$, $d^W(C_x,C_{z_{0}}) = \mathbf w$ and $d^W(C_{z_{0}},C_y) = \mathbf v$. For this
we separate below the cases $\ql\in Y^ {+0}$ and $\qm\in Y^ {+0}$.

\subsection{The case $\qm\in Y^ {+0}$}\label{4.4} 

\par We suppose $\ql\in Y\cap \sht^\circ$ (\resp $\ql\in Y^ {+0}$).
By Lemma \ref{4.2} above and Lemma \ref{sc6}, we have to find the number $a_{\mathbf w,\mathbf v}^{\mathbf u}$ of $C_{z_{0}}\in \mathscr C_0^+$ satisfying:

\par (a) $d^W(C_{x},z_{0})=\ql$ ,\qquad (b) $d^W(C_{z_{0}},y)=\qm$ , \qquad (c) $d^W(C_{z_{0}},C^+_{z_{0}})=v$

\parni   and  (d) $d^ {*W}(C^-_{z_{0}},C_{z_{0}})=w^+_{\ql}.w$ \qquad (\resp and (d) $d^ {W}(C^-_{z_{0}},C_{z_{0}})=w$).

\par Actually $\qm\in V_{0}$ is fixed by $W^v$ and $y,C_{z_{0}},C^+_{z_{0}}$ are in a same apartment (containing $C_{y}$ and $C_{z_{0}}$), so $d^W(C_{z_{0}},y)=\qm \iff d^W(C^+_{z_{0}},y)=\qm$.
Then $a_{\mathbf w,\mathbf v}^{\mathbf u}$ is the number of $C_{z_{0}}\in \mathscr C_0^+$ satisfying (a), (b') $d^W(C^+_{z_{0}},y)=\qm$, (c) and (d).
The first two conditions involve only $z_{0},C_{x},C_{y}\in\A$.

\begin{prop*} The number $a_{\mathbf w,\mathbf v}^{\mathbf u}$ is either $0$ (if the conditions (a), (b') above are incompatible) or 

\qquad\qquad\qquad\qquad $\sum _{\mathbf e\in\Gamma_{C^-_{z_0}}^+(C^+_{z_{0}},\mathbf i,C'_{z_0})} \sharp \mathcal C^m_{C^-_{z_0}} (C^+_{z_{0}},\mathbf e)$ 

where $\mathbf i$ is the type of a fixed reduced decomposition of $v^ {-1}$ and $C'_{z_0}$ is the unique local chamber at $z_{0}$ in $\A$ such that $d^ {*W}(C^-_{z_{0}},C'_{z_{0}})=w^+_{\ql}.w$  (\resp $d^ {W}(C^-_{z_{0}},C'_{z_{0}})=w$).
\end{prop*}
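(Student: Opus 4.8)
The plan is to pin down $z_0$ from conditions (a) and (b') of \ref{4.4}, and then to count the admissible local chambers $C_{z_0}$ at $z_0$ by the argument of step~3 in the proof of Theorem~\ref{sc7}.

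First I would determine $z_0$. By the reduction of \ref{4.4}, an admissible $C_{z_0}$ satisfies $d^W(C^+_{z_0},y)=\qm$, hence $d^v(z_0,y)=\qm^{++}=\qm$; since $\qm\in V_0$ is fixed by $W^v$, this forces $z_0\leq y$ and $y\leq z_0$, so, exactly as in the proof of Lemma~\ref{4.2} (enclosures being stable under $V_0$, see \ref{1.3}.1.a), any apartment containing $y$ contains $z_0$, and $z_0=y-\qm\in\A$. Likewise $d^W(C_x,z_0)=\ql$ gives $d^v(x,z_0)=\ql^{++}$, which when $\ql\in V_0$ means $z_0=x+\ql$, so that (a) and (b') are compatible precisely when $\qn=\ql+\qm$. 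If (a) and (b') are incompatible, then $a_{\mathbf w,\mathbf v}^{\mathbf u}=0$; otherwise $z_0$ is this uniquely determined point of $\A$, and consequently $C^+_{z_0}=pr_{z_0}(C_y)$ and $C^-_{z_0}=pr_{z_0}(C_x)$ lie in $\A$ and are determined by $z_0$ (with $C^-_{z_0}$ negative if $\ql\in\sht^\circ$ and positive if $\ql\in V_0$). By Lemma~\ref{sc6} (resp. Lemma~\ref{4.2}), condition (d) is met, inside $\A$, by a unique local chamber at $z_0$, namely the chamber $C'_{z_0}$ of the statement.

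It then remains to count the $C_{z_0}\in\mathscr C_0^+$ with $d^W(C_{z_0},C^+_{z_0})=v$, \ie $d^W(C^+_{z_0},C_{z_0})=v^{-1}$, that moreover satisfy (d), equivalently (by the previous paragraph) that retract onto $C'_{z_0}$ under $\qr:=\qr_{\A_{z_0},C^-_{z_0}}$ in the tangent building $\sht_{z_0}\SHI$. Fixing $\mathbf i$, the type of a reduced decomposition of $v^{-1}$, the map sending such a $C_{z_0}$ to the unique minimal gallery of type $\mathbf i$ from $C^+_{z_0}$ to $C_{z_0}$ in $\sht^+_{z_0}\SHI$ is a bijection onto the set of minimal galleries of type $\mathbf i$ issued from $C^+_{z_0}$. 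Retracting such a gallery $\mathbf c'$ by $\qr$ yields, by \cite[Proposition 4.4]{GR13} (cf. \ref{sc2}), a gallery $\mathbf c=\qr(\mathbf c')$ of type $\mathbf i$ issued from $C^+_{z_0}$ and centrifugally folded with respect to $C^-_{z_0}$; since $\qr$ preserves the (co)distance to its center $C^-_{z_0}$, the endpoint $\qr(C_{z_0})$ of $\mathbf c$ equals $C'_{z_0}$ exactly when $C_{z_0}$ satisfies (d), and then $\mathbf c'\in\mathcal C^m_{C^-_{z_0}}(C^+_{z_0},\mathbf c)$. Conversely, lifting any $\mathbf c\in\Gamma^+_{C^-_{z_0}}(C^+_{z_0},\mathbf i,C'_{z_0})$ back through $\qr$ produces exactly the galleries of $\mathcal C^m_{C^-_{z_0}}(C^+_{z_0},\mathbf c)$, all of whose endpoints are admissible. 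This yields the bijection
$$\{\,C_{z_0}\in\mathscr C_0^+ \text{ admissible}\,\}\;\simeq\;\coprod_{\mathbf e\in\Gamma^+_{C^-_{z_0}}(C^+_{z_0},\mathbf i,C'_{z_0})}\mathcal C^m_{C^-_{z_0}}(C^+_{z_0},\mathbf e),$$
hence the announced formula (the empty sum covering the case where (a),(b') hold but no admissible $C_{z_0}$ exists).

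The delicate point is this last step, and especially the uniform handling of the two sub-cases. When $\ql\in\sht^\circ$ the chamber $C^-_{z_0}$ is negative, $\qr$ is the twin-building retraction already used in \ref{sc3b}.1 and in step~3 of Theorem~\ref{sc7}, and (d) is a codistance condition. When $\ql\in V_0$ the chamber $C^-_{z_0}$ is positive, $\qr$ is an ordinary building retraction of $\sht^+_{z_0}\SHI$ centered at a chamber, and (d) is a genuine $W$-distance condition; but the notion of a gallery centrifugally folded with respect to a chamber $\mathfrak Q$ and the counting formula of \ref{sc2} are stated for $\mathfrak Q$ of either sign, so the same computation goes through verbatim.
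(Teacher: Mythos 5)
Your proposal is correct and follows essentially the same route as the paper: pin down $z_{0}$ from (a) and (b') (so that compatibility amounts to $\qn=\ql+\qm$), translate (c) into the choice of a minimal gallery of type $\mathbf i$ issued from $C^+_{z_{0}}$, translate (d) into the condition that this gallery retracts under $\qr_{\A,C^-_{z_{0}}}$ onto a centrifugally folded gallery ending in $C'_{z_{0}}$, and count via \ref{sc2}. You simply supply more detail than the paper's terse proof (including the sign discussion for $C^-_{z_{0}}$ when $\ql\in V_{0}$), and your slight detour through $d^v(x,z_{0})=\ql^{++}$ in fixing $z_{0}$ is harmless since $z_{0}=y-\qm$ already lies in $\A$.
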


\begin{rema*} The coefficient $a_{\mathbf w,\mathbf v}^{\mathbf u}$ is zero when (a) and (b') are incompatible, \ie when $\qn \neq \ql+\qm$: if in $\A$ we identify $C_{x}$ to the fundamental chamber $C^+_{0}$, (a) is equivalent to $z_{0}=x+\ql$, (b') to $y=z_{0}+\qm$ and $d^W(C_{x},C_{y})=\qn.u$ implies $y=x+\qn$.

\par But the other case where $a_{\mathbf w,\mathbf v}^{\mathbf u}=0$ is when $\Gamma_{C^-_{z_0}}^+(C^+_{z_{0}},\mathbf i,C'_{z_0})$ is empty.
\end{rema*}

\begin{proof} We have to translate the conditions (c) and (d).
We consider the retraction $\qr=\qr_{\A,C^-_{z_{0}}}$.
The condition (c) is equivalent to the existence of a minimal gallery $\mathbf c$ starting from $C^+_{z_{0}}$, of type $\mathbf i$ (\ie $\mathbf c\in \mathcal C^m(C^+_{z_{0}},\mathbf i)$) ending in $C_{z_{0}}$; and there is a bijection between these $\mathbf c$ and the $C_{z_{0}}$ satisfying (c).
Now the condition (d) is equivalent to $\qr(C_{z_{0}})=C'_{z_{0}}$ (as $\qr$ preserves the $W-$distances to $C^-_{z_{0}})$.
Considering $\mathbf e=\qr(\mathbf c)$, the proposition is now clear.
\end{proof}

\subsection{The case $\ql\in Y^ {+0}$ (and $\qm\in Y\cap \sht^\circ$)}\label{4.5} 

\par By Lemma \ref{4.2} above and Lemma \ref{sc6aa}, we have to find the number $a_{\mathbf w,\mathbf v}^{\mathbf u}$ of $C_{z_{0}}\in \mathscr C_0^+$ satisfying:

\par (a) $d^W(C_{x},z_{0})=\ql$ ,\qquad (b) $d^W(C^+_{z_{0}},y)=\qm^ {++}$ ,
\qquad (c) $d^ {*W}(C''_{y},C_y)=w^+_{\mu^{++}}w_{v^{-1}.\mu}$

\par (d) $d^ {W}(C^-_{z_{0}},C_{z_{0}})=w$
\quad  and \quad (e) $d^ {W}(C^+_{z_{0}},C_{z_{0}})=w_{v^ {-1}\qm}.v^ {-1}$ 

\parni But $C^+_{z_{0}}=pr_{z_{0}}(C_{y})$, $C''_{y}=pr_{y}(C^+_{z_{0}})$ and $C_{x},C_{y},z_{0}=x+\ql$ are in $\A$.
So the conditions (a), (b), (c) involve only $C_{x}, C_{y}$ and $z_{0}$.

\begin{prop*} The number $a_{\mathbf w,\mathbf v}^{\mathbf u}$ is either $0$ (if the conditions (a), (b), (c) above are incompatible) or 

\qquad\qquad\qquad\qquad $\sum _{\mathbf e\in\Gamma_{C^-_{z_0}}^+(C^+_{z_{0}},\mathbf i,C'_{z_0})} \sharp \mathcal C^m_{C^-_{z_0}} (C^+_{z_{0}},\mathbf e)$ 

where $\mathbf i$ is the type of a fixed reduced decomposition of $w_{v^ {-1}\qm}.v^ {-1}$ and $C'_{z_0}$ is the unique local chamber at $z_{0}$ in $\A$ such that  $d^ {W}(C^-_{z_{0}},C'_{z_{0}})=w$.
\end{prop*}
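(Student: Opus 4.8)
The plan is to follow the proof of the proposition in \ref{4.4} almost verbatim, with conditions (d) and (e) now playing the roles that (c) and (d) play there. Recall from the discussion preceding the statement that, by Lemma \ref{4.2} (applied to the pair $C_x,C_{z_0}$) and Lemma \ref{sc6aa} (applied to the pair $C_{z_0},C_y$), the chambers $C_{z_0}$ counted by $a_{\mathbf w,\mathbf v}^{\mathbf u}$ are exactly those satisfying the five conditions (a)--(e). First I would separate the ``rigid'' conditions from the ``free'' ones: condition (a) forces the vertex $z_0=x+\ql$ (Lemma \ref{4.2}), a fixed point of $\A$ with $x\le z_0$ and $z_0\le x$; and since $\mathbf v=\qm.v$ is spherical, the requirement $d^W(C_{z_0},C_y)=\mathbf v$ forces $d^v(z_0,y)=\qm^{++}\in\sht^\circ$, hence $z_0\stackrel{o}{<}y$. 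It follows that $C^-_{z_0}=pr_{z_0}(C_x)$ (a positive local chamber at $z_0$, by Lemma \ref{4.2}), $C^+_{z_0}=pr_{z_0}(C_y)$ and $C''_y=pr_y(C^+_{z_0})$ are all well defined, lie in $\A$, and depend only on $C_x$ and $C_y$. Hence conditions (a), (b), (c) involve only this fixed data, and if they cannot all be satisfied (for instance as soon as $\qn\notin\ql+W^v\qm$) then no $C_{z_0}$ contributes and $a_{\mathbf w,\mathbf v}^{\mathbf u}=0$. This bookkeeping --- checking that all the projections are forced into $\A$ and are insensitive to the free chamber $C_{z_0}$ --- is the only genuinely delicate point; everything afterwards is a straightforward transcription of \ref{4.4}.

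Assume now (a), (b), (c) hold, so that $z_0=x+\ql$ is fixed and $C^-_{z_0},C^+_{z_0}$ are fixed chambers of the apartment $\A_{z_0}=\A\cap\sht^+_{z_0}\SHI$. It then remains to count the positive local chambers $C_{z_0}$ at $z_0$ satisfying the two $W^v$-distance conditions (d) $d^W(C^-_{z_0},C_{z_0})=w$ and (e) $d^W(C^+_{z_0},C_{z_0})=w_{v^{-1}\qm}v^{-1}$ inside the building $\sht^+_{z_0}\SHI$. Condition (e) says that $C_{z_0}$ is the last chamber of a minimal gallery of type $\mathbf i$ (our fixed reduced word for $w_{v^{-1}\qm}v^{-1}$) issued from $C^+_{z_0}$, and $\mathbf c\mapsto C_{z_0}$ is a bijection from the set $\mathcal C^m(C^+_{z_0},\mathbf i)$ of such galleries onto the set of such chambers (\ref{1.13}). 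Next I would bring in the retraction $\qr=\qr_{\A,C^-_{z_0}}$, whose restriction to $\sht^+_{z_0}\SHI$ is the retraction of that building onto $\A_{z_0}$ centred at $C^-_{z_0}$ (cf. \ref{suse:Paths}, \ref{sc3b}.2): it fixes $\A_{z_0}$ and preserves $W$-distances to $C^-_{z_0}$, so condition (d) is equivalent to $\qr(C_{z_0})=C'_{z_0}$, where $C'_{z_0}$ is the unique chamber of $\A_{z_0}$ at $W$-distance $w$ from $C^-_{z_0}$.

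Finally I would glue the two descriptions. For $\mathbf c\in\mathcal C^m(C^+_{z_0},\mathbf i)$ the retracted gallery $\mathbf e=\qr(\mathbf c)$ has type $\mathbf i$, starts at $\qr(C^+_{z_0})=C^+_{z_0}$ and ends at $\qr(C_{z_0})$; hence $C_{z_0}$ satisfies (d) if and only if $\mathbf e$ ends at $C'_{z_0}$, which --- excluding galleries with no minimal lifting --- amounts to $\mathbf e\in\Gamma^+_{C^-_{z_0}}(C^+_{z_0},\mathbf i,C'_{z_0})$. Grouping the $\mathbf c$ by their retraction $\mathbf e$ and recalling from \ref{sc2} (i.e. \cite[Prop.~4.4, Cor.~4.5]{GR13}) that the number of $\mathbf c\in\mathcal C^m(C^+_{z_0},\mathbf i)$ with $\qr(\mathbf c)=\mathbf e$ equals $\sharp\mathcal C^m_{C^-_{z_0}}(C^+_{z_0},\mathbf e)$, one obtains
$$a_{\mathbf w,\mathbf v}^{\mathbf u}=\sum_{\mathbf e\in\Gamma^+_{C^-_{z_0}}(C^+_{z_0},\mathbf i,C'_{z_0})}\sharp\mathcal C^m_{C^-_{z_0}}(C^+_{z_0},\mathbf e),$$
as claimed. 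No geometric input beyond Lemmas \ref{4.2} and \ref{sc6aa} and the lifting count of \ref{sc2} is needed; the argument differs from that of \ref{4.4} only in which pair of $W^v$-distance conditions survives at the vertex $z_0$.
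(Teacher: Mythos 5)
Your proposal is correct and follows essentially the same route as the paper: the paper's proof of this proposition simply says that conditions (d) and (e) are translated exactly as in the proposition of \ref{4.4}, namely (e) puts $C_{z_0}$ in bijection with minimal galleries of type $\mathbf i$ issued from $C^+_{z_0}$, (d) becomes $\qr_{\A,C^-_{z_0}}(C_{z_0})=C'_{z_0}$, and one groups the galleries by their retraction using the count of \ref{sc2}. Your additional bookkeeping on why (a), (b), (c) pin down $z_0$, $C^-_{z_0}$, $C^+_{z_0}$ and $C''_y$ inside $\A$ matches the discussion the paper places just before the statement in \ref{4.5} and \ref{4.3}.
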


\begin{rema*} The coefficient $a_{\mathbf w,\mathbf v}^{\mathbf u}$ is zero when (a), (b) and (c) are incompatible, \ie when $z_{0}$, determined by (b) does not satisfy (a) and (c).
But it is more difficult than in \ref{4.4} to translate it simply.
It is also zero when $\Gamma_{C^-_{z_0}}^+(C^+_{z_{0}},\mathbf i,C'_{z_0})$ is empty.
\end{rema*}

\begin{proof} We have to translate conditions (d) and (e). It goes the same way as in \ref{4.4}.
\end{proof}

\subsection{Conclusion}\label{4.6} 

\par In all cases where $\ql,\qm\in Y^ {+g}=Y\cap(\sht^\circ \cup V_{0})$, we may use the formula for $ \mathcal C^m_{\g Q} (C'_{z},\mathbf c)$ in \ref{sc2}, the Theorem \ref{sc7} and/or the Propositions \ref{4.4}, \ref{4.5}.
We get the expected result: the structure constant $a_{\mathbf w,\mathbf v}^{\mathbf u} $ is a polynomial in the parameters $q_{i}-1,q'_{i}-1$ for $q_i,q'_i\in\shq$ with coefficients in $\N=\Z_{\geq0}$ and  this polynomial depends only on $\A$, $W$, $\mathbf w$, $\mathbf v$ and $\mathbf u$.
We have proved Conjecture 1 in these cases, in particular in the affine or strictly hyperbolic cases.


\medskip
\par\noindent Universit\'e de Lorraine, CNRS, IECL, F-54000 Nancy, France

E-mail: Nicole.Panse@univ-lorraine.fr ; Guy.Rousseau@univ-lorraine.fr

\end{document}